\theoremstyle{plain}
\newtheorem{theorem}{Theorem}
\newtheorem{lemma}[theorem]{Lemma}
\newtheorem{corollary}[theorem]{Corollary}
\newtheorem{assumption}{Assumption}
\newtheorem{proposition}[theorem]{Proposition}
\newcommand{\Z}{\mathbb{Z}}
\newcommand{\R}{\mathbb{R}}
\renewcommand{\P}{\operatorname{\mathbb{P}}}
\newcommand{\E}{\operatorname{\mathbb{E}}}
\newcommand{\bigo}{\ensuremath{\mathcal{O}}}
\newcommand\smallo{
  \mathchoice
    {{\scriptstyle\mathcal{O}}}
    {{\scriptstyle\mathcal{O}}}
    {{\scriptscriptstyle\mathcal{O}}}
    {\scalebox{.7}{$\scriptscriptstyle\mathcal{O}$}}
  }
\newcommand{\indi}{\mathds{1}}
\newcommand{\half}{\frac{1}{2}}
\newcommand{\scL}{L^{\circlearrowleft}} 
\newcommand{\scp}{p^{\circlearrowleft}} 
\newcommand{\scdelta}{\delta^{\circlearrowleft}} 
\newcommand{\fsc}{f^{\textup{\texttt{sc}}}}
\newcommand{\hsc}{h^{\textup{\texttt{sc}}}}
\newcommand{\op}{\textup{op}}
\DeclareMathOperator{\var}{var}
\DeclareMathOperator{\trace}{trace}
\DeclareMathOperator{\sign}{sign}
\newcommand{\bbB}{\mathbb{B}}
\newcommand{\bbL}{\mathbb{L}}
\newcommand{\bbJ}{\mathbb{J}}
\newcommand{\rmd}{\mathrm{d}}
\def\se{\mbox{\rm\tiny e}}
\def\sv{\mbox{\rm\tiny v}}
\def\sperc{\mbox{\rm\tiny perc}}
\def\sBayes{\mbox{\rm\tiny Bayes}}
\def\sTV{\mbox{\rm\tiny TV}}
\def\sSK{\mbox{\rm\tiny SK}}
\newcommand{\Ent}{\mathds{H}}
\def\risk{\mathcal{R}}
\def\bfone{\mathbf{1}}
\numberwithin{equation}{section}
\numberwithin{theorem}{section}
\begin{document}

\title{Efficient $\Z_2$ synchronization on $\Z^d$ under symmetry-preserving side information}

\author{
Ahmed El Alaoui \thanks{Department of Statistics and Data Science, Cornell university.}
}

\date{}
\maketitle

\vspace{-.2cm}

\begin{abstract}
We consider $\Z_2$-synchronization on the Euclidean lattice. Every vertex of $\Z^d$ is assigned an independent symmetric random sign $\theta_u$, and for every edge $(u,v)$ of the lattice, one observes the product $\theta_u\theta_v$ flipped independently with probability $p$. The task is to reconstruct products $\theta_u\theta_v$ for pairs of vertices $u$ and $v$ which are arbitrarily far apart. 
Abb\'e, Massouli\'e, Montanari, Sly and Srivastava (2018) showed that synchronization is possible if and only if $p$ is below a critical threshold $\tilde{p}_c(d)$, and efficiently so for $p$ small enough. 
We augment this synchronization setting with a model of side information preserving the sign symmetry of $\theta$, and propose an \emph{efficient} algorithm which synchronizes a randomly chosen pair of far away vertices on average, up to a differently defined critical threshold $p_c(d)$. We conjecture that $ p_c(d)=\tilde{p}_c(d)$ for all $d \ge 2$. Our strategy is to \emph{renormalize} the synchronization model in order to reduce the effective noise parameter, and then apply a variant of the multiscale algorithm of AMMSS. The success of the renormalization procedure is conditional on a plausible but unproved assumption about the regularity of the free energy of an Ising spin glass model on $\Z^d$.              
\end{abstract}
\section{Introduction and main results}
In this paper we study a \emph{group synchronization} problem. Formulated generally, this is the task of estimating a sequence of group--valued random variables up to a global shift from noisy measurements of their differences. This problem arises in several applications such as community detection, where individuals belong to well-defined communities and observations come in the form of pairwise affinities between individuals~\cite{karrer2011stochastic}, time synchronization in sensor networks where the goal is to adjust the local clock of each sensor given noisy clock differences~\cite{howard2010estimation}, computer vision and microscopy, where the task is to estimate the shape of a moving object from multiple noisy snapshots of it~\cite{singer2011three}.

 This problem has mostly been studied in settings where the set of compared pairs forms a ``unstructured" graph such as the complete graph or a random graph from the Erd\"os-R\'enyi model. The analysis and the algorithms proposed in such cases rely on the high degree of exchangeability of the variables involved in the problem. A prime example being $\Z_2$ synchronization on the complete graph, where the observations can be put together in a matrix having the form of a rank-one-perturbed Wigner matrix; a well studied object for which powerful tools from random matrix theory and mean-field methods originating in statistical physics are readily available. These methods will typically fail on more structured graphs having geometric features such as finite-dimensional lattices. 
Furthermore, studying signal recovery problems on structured graphs can potentially be relevant to applications such as the ones mentioned above where, due to resource constraints or simply the problem's nature, only a limited number of comparisons can be measured.

To our knowledge, Abb\'e, Massouli\'e, Montanari, Sly and Srivastava~\cite{abbe2017group} were the first to consider the synchronization problem on the Euclidean lattice $\Z^d$ for compact groups. They proved for $d\ge 3$ that synchronization is possible with an efficient algorithm if the noise is weak enough and impossible in the opposite regime where the noise is strong. (They also treat the case $d=2$ in their paper, however this case is special, as the structure of the group plays an important role in the results.) By monotonicity with respect to the value of the noise, this establishes the existence of a critical value where recovery is possible below it and impossible above. We call this threshold the \emph{information-theoretic} threshold, or limit, of synchronization. In this paper we ask the question
\begin{quote}                         
Is group synchronization on $\Z^d$ possible with an \emph{efficient} algorithm up the information-theoretic limit?  
 \end{quote}
 We answer this question in the affirmative in the case of the simplest non trivial group $\Z_2 = \{-1,+1\}$, for all dimensions $d \ge 2$, assuming one has access to a vanishing amount of `side information' providing extra help in the synchronization task. We should mention that contrary to `mean-field models' as discussed above, no `explicit' characterization of this information-theoretic threshold is known on $\Z^d$. Therefore whatever strategy aiming to answer the above question has to be content with the implicit definition of the threshold, as we do in this paper, and build an algorithm based on it.   
 
 We now set up the mathematical problem. 
Let $\theta = (\theta_u)_{u \in \Z^d}$ be i.i.d.\ symmetric $\pm 1$ random variables assigned to the vertices of $\Z^d$. The edges of the lattice $\mathbb{E}^d = \{(u,v) \in \Z^d \times \Z^d : |u-v| = 1\}$ ($|\cdot|$ is the $\ell_2$ distance in $\R^d$) 
are assigned random variables 
\begin{align}\label{eq:lattice_model}
Y^{\delta}_{uv} = 
\begin{cases}
+\theta_u\theta_v &\mbox{with probability } 1-p,\\
-\theta_u\theta_v &\mbox{with probability } p,
\end{cases}
\end{align}
independently for every $(u,v) \in \mathbb{E}^d$ conditional on $\theta$, where $p \in (0,\half)$ is a fixed parameter. We let $\delta = 1-2p$ so that $\E[Y_{uv}^{\delta}|\theta_u,\theta_v]=\E[Y_{uv}^{\delta}|\theta_u\theta_v]=\delta \theta_u\theta_v$.  

Let $\Lambda_n = [-n,n]^d \cap \Z^d$ be a finite box of side length $2n+1$  and let $E_n = \mathbb{E}^d\cap \Lambda_n^2$ be its set of edges. We will informally denote the graph $(\Lambda_n,E_n)$ by its vertex set $\Lambda_n$. 
Let us also denote the set of observations in the box $\Lambda_n$  by $Y_{\Lambda_n}^{\delta}=\{Y_{uv}^\delta \, :\, u,v\in \Lambda_n\}$. 

Due to sign symmetry, it is not possible to distinguish $\theta$ from $-\theta$ from the knowledge of the edge observations $Y_{\Lambda_n}^\delta$. The \emph{synchronization problem} is as follows: given $Y_{\Lambda_n}^\delta$ and possibly a small amount of side information as clarified below, we want to produce estimates $T_{uv}$ that approximate the product $\theta_u\theta_v$ with non-trivial error for `most' pairs of vertices $u,v\in \Lambda_n$. 

We assume that side information comes in the form of pairwise measurements according to the \emph{`spiked GOE'} model\footnote{The terminology is meant to evoke the random matrix model $\theta\theta^\top + W$ where $W$ is from the Gaussian Orthogonal Ensemble, although due to the finite interaction range $L$, this analogy is only superficial.} with signal-to-noise ratio (SNR) $\eta$ and interaction range $L$:
\begin{equation}\label{eq:GOE_side_info}
Y^{\eta,L}_{uv} = \sqrt{\frac{\eta}{L^d}}\theta_{u}\theta_{v} + Z_{uv},\qquad \forall u,v \in \Lambda_n ~\mbox{s.t.}~ |u-v|_{\infty} \le L,
\end{equation}
where $Z_{uv} \sim N(0,1)$ are independent for all unordered pairs $(u,v)$, $|\cdot|_{\infty}$ denotes the $\ell_\infty$ distance in $\R^d$. The range of the interaction $L$ is a large but fixed constant independent of $n$, and $\eta$ is an arbitrarily small constant. 

We denote by $Y_{\Lambda_n}^{\delta,\eta,L} = \{Y_{\Lambda_n}^{\delta},Y_{\Lambda_n}^{\eta,L}\}$  the union of the measurements~\eqref{eq:lattice_model} and~\eqref{eq:GOE_side_info}. We are interested in maximizing the criterion 
\begin{equation}\label{eq:risk}
\risk_{\Lambda_n}(T) := \frac{1}{|\Lambda_n|^2} \sum_{u,v\in \Lambda_n} \E\Big[T_{uv}(Y_{\Lambda_n}^{\delta,\eta,L})\theta_u\theta_v\Big],
\end{equation}
which measures the average correlation between the estimate $T_{uv}$ and $\theta_u\theta_v$, among all estimators $T = (T_{uv})_{u,v\in \Lambda_n}$ where $T_{uv}:\{-1,+1\}^{E_n} \times \R^{\Lambda_n^2} \to \{-1,+1\}$ are measurable functions.

The presence of side information~\eqref{eq:GOE_side_info} is a technical device enabling the analysis of the posterior distribution of $\theta_{\Lambda_n} = (\theta_u)_{u \in \Lambda_n}$. Our results will be stated in the regime $L \to \infty$ and $\eta \to 0$  (after $n \to \infty$). The use of this device is well established in information theory, spin glass theory, and models of high-dimensional statistical inference~\cite{MontanariSparse,panchenko2013sherrington,lelarge2017fundamental}, although a different model of side information, or perturbation, is typically used in the literature. The specific form of the side information~\eqref{eq:GOE_side_info} does not break the sign symmetry of $\theta_{\Lambda_n}$ and as we explain in Section~\ref{sec:related} the problem of synchronization is still non trivial.

Before considering the algorithmic question, we first define the information-theoretic limits of synchronization. For the purpose of establishing some key definitions, we consider the situation where side information of the form~\eqref{eq:GOE_side_info}  is available for every pair $u,v \in \Lambda_n$; this is equivalent to $L=L_n=2n+1$. 
We now define the average pair correlations      
\begin{equation}\label{eq:pair_correlation}
\varphi_{\Lambda_n}^{\se} := \frac{1}{|\Lambda_n|^2} \sum_{u,v\in \Lambda_n}  \E \Big[\E\big[\theta_u\theta_v | Y_{\Lambda_n}^{\delta,\eta,L_n}\big]^2\Big].
\end{equation}
Note that $\varphi_{\Lambda_n}^{\se} = \risk_{\Lambda_n}(T^{\sBayes})$ where $T^{\sBayes}_{uv} = \E\big[\theta_u\theta_v | Y_{\Lambda_n}^{\delta,\eta,L_n}\big]$ is the Bayes-optimal estimator.  

\begin{proposition}\label{prop:conv_pair_corr}
The sequence $(\varphi_{\Lambda_n}^{\se})_{n\ge 1}$ has a limit $q_{\star}^2(\delta,\eta)$ for all $\delta\in [0,1]$ and all except countably many $\eta\ge 0$. Moreover, the maps $\delta \mapsto q_{\star}(\delta,\eta)$ and $\eta \mapsto q_{\star}(\delta,\eta)$ are non-decreasing.   
\end{proposition}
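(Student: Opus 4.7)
The plan is to recast $\varphi_{\Lambda_n}^{\se}$ as an $\eta$-derivative of a free-energy functional and appeal to the fact that pointwise limits of convex functions converge in derivative at every differentiability point of the limit.

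First I introduce the posterior partition function. Writing $\beta(\delta)=\tfrac{1}{2}\log\tfrac{1+\delta}{1-\delta}$ and $\langle\cdot\rangle$ for expectation against the posterior of $\theta_{\Lambda_n}$ given $Y^{\delta,\eta,L_n}_{\Lambda_n}$, set
\begin{equation*}
Z_n(\delta,\eta) := \sum_{\sigma\in\{\pm 1\}^{\Lambda_n}} \exp\!\Bigl(\beta(\delta)\!\!\!\sum_{(u,v)\in E_n}\!\!\! Y^{\delta}_{uv}\sigma_u\sigma_v \;+\; \sqrt{\eta/|\Lambda_n|}\!\sum_{u,v\in\Lambda_n}\! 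Y^{\eta,L_n}_{uv}\sigma_u\sigma_v\Bigr),
\end{equation*}
and $F_n(\delta,\eta):=|\Lambda_n|^{-1}\E\log Z_n(\delta,\eta)$. A Gaussian integration by parts on the noise variables $Z_{uv}$ combined with the Nishimori identity $\E[\theta_u\theta_v\langle\sigma_u\sigma_v\rangle] = \E[\langle\sigma_u\sigma_v\rangle^2]$ yields
\begin{equation*}
\frac{\partial F_n}{\partial \eta}(\delta,\eta) \;=\; \tfrac{1}{2}\bigl(1+\varphi_{\Lambda_n}^{\se}(\delta,\eta)\bigr),
\end{equation*}
and a second such computation (equivalently, the I--MMSE identity stating that Bayes-optimal MMSE is non-increasing in SNR) shows that $\eta\mapsto \varphi_{\Lambda_n}^{\se}(\delta,\eta)$ is non-decreasing, so that $F_n(\delta,\cdot)$ is convex on $[0,\infty)$. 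Monotonicity of $\delta\mapsto\varphi_{\Lambda_n}^{\se}(\delta,\eta)$ follows analogously, either by interpolating the binary edge channels through a Gaussian channel and applying I--MMSE, or by a Blackwell-type stochastic domination between binary symmetric channels with parameters $\delta_1\le\delta_2$.

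The second step, and the main technical obstacle, is to establish a pointwise limit $F(\delta,\eta):=\lim_n F_n(\delta,\eta)$ for every $(\delta,\eta)\in[0,1]\times[0,\infty)$. The Hamiltonian decomposes into a finite-range Edwards--Anderson-like piece on $\Z^d$ with $\pm\beta(\delta)$ couplings and a fully connected Sherrington--Kirkpatrick-like piece of coupling strength $\sqrt{\eta/|\Lambda_n|}$ with planted signal $\theta$. Each admits a thermodynamic limit in isolation---the lattice piece by super-additivity of $|\Lambda_n|F_n$ modulo a boundary term, and the mean-field piece by Guerra--Toninelli interpolation combined with Fekete's lemma---and one merges them via a hybrid interpolation comparing $\Lambda_n$ against a disjoint union of half-sized sub-boxes, in which the mean-field cross-interactions between sub-boxes are decoupled by a Gaussian comparison while a lattice seam along the cut is reintroduced by a boundary correction. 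Crucially, the Nishimori identity forces the overlap-moment remainder in the Guerra--Toninelli bound to carry a definite sign, giving a super-additivity relation $|\Lambda_n|F_n\ge 2|\Lambda_{\lfloor n/2\rfloor}|F_{\lfloor n/2\rfloor} - o(|\Lambda_n|)$ from which convergence follows.

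Given pointwise convergence $F_n\to F$ with each $F_n(\delta,\cdot)$ convex, the limit $F(\delta,\cdot)$ is convex on $[0,\infty)$ and hence differentiable outside a countable set $D_\delta\subset[0,\infty)$. The standard fact that pointwise limits of convex functions converge in derivative at every differentiability point of the limit then gives, for every $\delta\in[0,1]$ and $\eta\notin D_\delta$,
\begin{equation*}
\varphi^{\se}_{\Lambda_n}(\delta,\eta) \;=\; 2\,\frac{\partial F_n}{\partial\eta}(\delta,\eta)-1 \;\longrightarrow\; 2\,\frac{\partial F}{\partial\eta}(\delta,\eta)-1 \;=:\; q_{\star}^2(\delta,\eta).
\end{equation*}
Non-negativity of $q_\star^2$ is automatic from $\varphi^{\se}_{\Lambda_n}\ge 0$, so $q_\star$ is well-defined. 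Both monotonicities $\eta\mapsto q_\star(\delta,\eta)$ and $\delta\mapsto q_\star(\delta,\eta)$ descend to the limit along sequences avoiding $D_\delta$ from the finite-$n$ monotonicities established in the first step.
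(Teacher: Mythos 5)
Your reduction of $\varphi_{\Lambda_n}^{\se}$ to the $\eta$-derivative of a free energy, the convexity via I--MMSE, and the passage to the limit of derivatives at differentiability points of the convex limit are exactly the paper's mechanism (Eq.~\eqref{eq:derivative_f_Bn_eta} and Lemma~\ref{lem:derivative_phi}), and your channel-degradation argument for $\delta$-monotonicity is fine. Where you genuinely diverge is the hard step, the existence of $\lim_n F_n$. The paper proves it by a Guerra interpolation onto the decoupled scalar-channel system: a lower bound by dropping a nonnegative square, and an upper bound obtained by restricting the overlap $R_{1,0}$ to a fixed value and invoking a log-Sobolev concentration estimate to exchange $\max_m$ and $\E$ (Section~\ref{sec:proof_limit_Bn_GOE}). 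This yields the variational formula $\phi=\sup_q\{\fsc(\delta,\eta q)-\eta q^2/4\}$, which the paper then needs elsewhere (identification of $q_\star$ as the unique maximizer, Lemmas~\ref{lem:derivative_phi} and~\ref{lem:maximizers_are_close}). Your route --- decoupling the planted GOE interaction across sub-boxes by a Guerra--Toninelli interpolation whose remainder has a definite sign on the Nishimori line, then Fekete --- avoids the concentration machinery and the restricted-overlap trick entirely, at the cost of not identifying the limit; for this proposition that trade is acceptable, and the signed-remainder mechanism you invoke is precisely the one the paper itself uses for the locking property (Section~\ref{sec:overview_proof}, Proposition~\ref{prop:locking}), so it is sound.

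Two slips in that step should be fixed. First, the sign: moving weight from per-block GOE onto the fully coupled GOE makes the derivative $-\tfrac{\eta}{2}\sum_i c_i\,\E\langle(R_{1,2}^{(i)}-R_{1,2})^2\rangle\le 0$ after the Nishimori identity, so you get \emph{sub}additivity $|\Lambda_{2n}|F_{2n}\le \sum_i |\Lambda^{(i)}_n|F^{(i)}_n+\bigo(n^{d-1})$, not superadditivity; Fekete still applies (the limit is an infimum, and $F_n$ is bounded below), but the inequality as you wrote it is backwards. Second, the geometry: in $d\ge 2$ a cube cannot be split into two half-sized cubes, so the comparison must be against $2^d$ congruent sub-boxes, which gives convergence along a dyadic subsequence only; extending to all $n$ (and to a general van Hove sequence) requires the covering argument the paper carries out for $\fsc_{B_n}$ in the proof of Proposition~\ref{prop:limit_Bn_scalar}. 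Both repairs are routine, so I would classify this as a correct alternative proof with a reversed inequality and an under-specified decomposition rather than a broken one.
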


The proof of Proposition~\ref{prop:conv_pair_corr} can be found in Section~\ref{sec:GOEside}. Let us denote by  $\mathcal{D}_{\delta}$ the countable set of $\eta$ (for a given $\delta$) where $(\varphi_{\Lambda_n}^{\se})_n$ fails to converge, as per Proposition~\ref{prop:conv_pair_corr}. Then the following limit exists for all $\delta \in [0,1]$ and is non-decreasing in $\delta$:
 \begin{equation}\label{q_star_star}
 q_{\star\star}(\delta) := \lim_{\underset{\eta \notin \mathcal{D}_{\delta}}{\eta \to 0^+}} q_{\star}(\delta,\eta).
 \end{equation}
 We now define \emph{the synchronization threshold}
\begin{equation}\label{rec_threshold}
\delta_c = \inf\{\delta>0 ~:~ q_{\star\star}(\delta) >0\}.
\end{equation}

The next two results show that synchronization is efficiently possible above $\delta_c$ and impossible below (efficiently or otherwise). The synchronization result is conditional on an unproved but highly plausible assumption which we state in Section~\ref{sec:posteriors}, Assumption~\ref{assump:lipschitz}.  
\begin{theorem}\label{thm:reconstruction}
Conditional on Assumption~\ref{assump:lipschitz}, for all $d \ge 2$ and all $\delta > \delta_c$ there exists $\eta_0 = \eta_0(d,\delta)>0$ such that for almost all $\eta\le \eta_0$, there exists $L_0 = L_0(\delta,d,\eta)\ge 1$ and $n_0 =n_0(\delta,d,\eta)$ such that the following holds. For all $n$ there exists a randomized estimator $T^{(n)} = (T^{(n)}_{uv})_{u,v \in \Lambda_n}$ where $T^{(n)}_{uv} = T^{(n)}_{uv}(Y_{\Lambda_n}^{\delta,\eta,L})\in \{-1,+1\}$ with runtime $\bigo(n^{2d})$ such that for all $n \ge n_0$ and all $L \ge L_0$,
\[\risk_{\Lambda_n}\big(T^{(n)}\big) \ge \frac{9}{10}q_{\star\star}^2(\delta).\]  
\end{theorem}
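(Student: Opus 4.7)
The plan is to combine a block-level renormalization of the synchronization model with the multiscale algorithm of Abb\'e--Massouli\'e--Montanari--Sly--Srivastava \cite{abbe2017group} (AMMSS). That algorithm by itself only succeeds in the low-noise regime $p \le p_\star(d)$ for some small explicit constant, so a direct application is out of reach when $\delta$ sits just above $\delta_c$. The idea is to exploit the symmetry-preserving side information at range $L$ to denoise at the level of $L$-blocks, producing effective edge observations on a coarse-grained lattice whose flip probability can be driven arbitrarily close to $0$ as $L \to \infty$. AMMSS is then invoked on the coarse lattice, and the output is lifted back to $\Lambda_n$ using within-block posterior estimates.

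Concretely, fix $\delta > \delta_c$ and pick $\eta \notin \mathcal{D}_\delta$ small enough that $q_\star^2(\delta,\eta) \ge (1-\varepsilon)\, q_{\star\star}^2(\delta)$. Partition $\Lambda_n$ into disjoint cubes $\{B_k\}$ of side length $L$, choose a reference vertex $u_k^\star \in B_k$ in each block, and compute the block-local posterior means
\[ \hat m_{u,k} \;=\; \E\bigl[\theta_u\theta_{u_k^\star} \,\big|\, Y_{B_k}^{\delta,\eta,L}\bigr], \qquad u \in B_k. \]
By Proposition~\ref{prop:conv_pair_corr} and the choice of $\eta$, for $L$ large the block-averaged quantity $L^{-d}\sum_{u \in B_k}\E[\hat m_{u,k}^2]$ is within $\varepsilon$ of $q_{\star\star}^2(\delta)$. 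For each pair of adjacent blocks $B_k, B_j$, aggregate the $\sim L^{d-1}$ boundary edges into a renormalized observation
\[ Y'_{kj} \;:=\; \sign\!\Bigl( \sum_{\substack{u \in B_k,\, v \in B_j \\ |u-v|=1}} \hat m_{u,k}\, Y^\delta_{uv}\, \hat m_{v,j} \Bigr). \]
The Nishimori property implies that each summand has mean aligned with $\theta_{u_k^\star}\theta_{u_j^\star}$ with typical magnitude of order $\delta\, q_\star^2(\delta,\eta)$, so a standard concentration estimate yields $\P(Y'_{kj} \neq \theta_{u_k^\star}\theta_{u_j^\star}) \to 0$ as $L \to \infty$. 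Treating $\{B_k\}$ as a copy of $\Z^d$, the $\{Y'_{kj}\}$ form a $\Z_2$ synchronization instance of effective flip probability $p' = o_L(1)$, and the AMMSS multiscale algorithm then produces signs $\sigma_k \in \{-1,+1\}$ highly correlated with $\theta_{u_k^\star}$, in time $\bigo((n/L)^{2d}) = \bigo(n^{2d})$. The final estimator is $T^{(n)}_{uv} = \sigma_k \sigma_j \sign(\hat m_{u,k})\sign(\hat m_{v,j})$ for $u \in B_k,\, v \in B_j$. Its risk factorizes to leading order as (coarse-lattice accuracy)$\times$(within-block accuracy), converging to $q_{\star\star}^2(\delta)$ as $L \to \infty$ and $\eta \to 0$; choosing $\varepsilon, L, \eta$ appropriately yields the claimed lower bound $\tfrac{9}{10}q_{\star\star}^2(\delta)$.

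The hard part is making the two concentration statements above quantitative: (i) that $L^{-d}\sum_u \E[\hat m_{u,k}^2] \to q_\star^2(\delta,\eta)$ with a known rate, and (ii) that the aggregated boundary sum defining $Y'_{kj}$ concentrates around its mean. Both require sharp control of overlap fluctuations in the block-level Gibbs measures induced by the observations, and this is precisely where Assumption~\ref{assump:lipschitz} enters: Lipschitz regularity of the free energy in the side-information parameter $\eta$ translates, via convexity and Nishimori identities, into quantitative control of the overlap functionals that drive both estimates. A secondary but genuine issue is ensuring that different $Y'_{kj}$'s are sufficiently independent to satisfy the hypotheses of AMMSS; this should follow from the locality of the posterior computation (observations supported in disjoint blocks) and the edge independence of $Y^\delta$, with a thin buffer between blocks used to decouple the GOE side information across boundaries.
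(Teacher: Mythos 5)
Your overall architecture---coarse-grain at scale $L$ so that the effective flip probability of block-level observations tends to zero, run a multiscale scheme on the coarse lattice, and lift back---is the same as the paper's, but two steps of your construction have genuine gaps, and they are precisely where the paper's machinery does its work. First, your renormalized observation $Y'_{kj}$ aggregates only the $\sim L^{d-1}$ boundary edges, and (conditioning on the two blocks and using that the crossing edges $Y^{\delta}_{uv}$ are independent of the block data) its conditional mean is, up to the factor $\delta$, a product of overlap functionals restricted to a codimension-one face, namely $\frac{1}{L^{d-1}}\sum_{u \in \partial B_k} \hat m_{u,k}\,\theta_u\theta_{u_k^\star}$ and its analogue in $B_j$. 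For $\P(Y'_{kj}\neq \theta_{u_k^\star}\theta_{u_j^\star})$ to vanish you need these face-restricted overlaps to concentrate on a strictly positive value. Neither Proposition~\ref{prop:conv_pair_corr} (an average over all pairs in the box) nor Assumption~\ref{assump:lipschitz} delivers this: the face is a vanishing fraction of the block, and its overlap could a priori be systematically smaller than, or fluctuate around, the bulk value. This is exactly the obstacle the paper's construction is designed to remove: it uses \emph{overlapping} blocks whose intersection has positive volume fraction $\alpha$, and engineers the side information with the interpolation parameter $t$ so that the $t$-derivative of the free energy controls $\E\langle (R_{1,2}(B\cap B')-R_{1,2}(B))^2\rangle$; the resulting locking property (Proposition~\ref{prop:locking}) is what transfers concentration from the full block to a sub-region. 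Without positive-volume joints or a locking mechanism, your step (ii) is not a technicality to be supplied later---it is the missing core of the argument.

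Second, the lift $T^{(n)}_{uv}=\sigma_k\sigma_j\,\sign(\hat m_{u,k})\sign(\hat m_{v,j})$ does not reach the claimed constant. By the Nishimori identity, $\E[\sign(\hat m_{u,k})\,\theta_u\theta_{u_k^\star}]=\E[|\hat m_{u,k}|]$, and from $\frac{1}{L^d}\sum_u \E[\hat m_{u,k}^2]\approx q_\star^2$ one can only conclude $\frac{1}{L^d}\sum_u \E[|\hat m_{u,k}|]\ge q_\star^2$ by Jensen; absent per-pair concentration of $|\hat m_{u,k}|$ near $q_\star$ (which is not available---only block-averaged overlaps concentrate), the product of the two within-block factors is guaranteed only to be of order $q_{\star\star}^4$, not $q_{\star\star}^2$. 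The paper sidesteps this by drawing a \emph{sample} $\theta^B$ from the block posterior and using $\hat\theta_x=\hat\sigma_B\theta^B_x$: the relevant quantity is then the block overlap $M_B=\frac{1}{|B|}\sum_x\theta^B_x\theta_x$, whose variance vanishes (Theorem~\ref{thm:everything}), so $\E[|M_B|]\to q_{\star\star}$ and the risk factorizes to $\frac{19}{21}q_{\star\star}^2$. A further point you flag but understate: the coarse observations sharing a block are dependent through that block's posterior even with buffers, so AMMSS cannot be invoked as a black box; the paper reworks the multiscale induction using sparsified boundary sets $\Xi(B_1,B_2)$ whose edges have pairwise non-adjacent endpoints precisely to restore the independence needed for the concentration step.
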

The above result establishes the existence of an algorithm whose running time is quadratic in the size of the problem and which succeeds at synchronizing a randomly chosen pair of variables on average, immediately above the synchronization threshold $\delta_c$, when given an arbitrarily small amount of side information as per Eq.~\eqref{eq:GOE_side_info}. (Note that the interaction range $L$ in the side information does not depend on $n$.) 
 Moreover the synchronization accuracy is near optimal (the constant $\frac{9}{10}$ is arbitrary and can be replaced by any other constant $c_0<1$.)  
We next answer the natural question of whether synchronization is possible below $\delta_c$. We show that this is not the case in the following sense.
\begin{theorem}\label{thm:lower_bound}
Let $\delta<\delta_c$ and $L = L_n\ge 1$ be any sequence of integers depending on $n$. There exists a set $\mathcal{A} \subset \R_+$ of full Lebesgue measure such that the following holds. For any sequence of estimators $T^{(n)} = (T^{(n)}_{uv})_{u,v \in \Lambda_n}$ where $T^{(n)}_{uv} = T^{(n)}_{uv}(Y_{\Lambda_n}^{\delta,\eta,L_n})\in \{-1,+1\}$, we have
\[\lim_{\underset{\eta \in \mathcal{A}}{\eta \to 0^+}} \liminf_{n\to \infty} \big|\risk_{\Lambda_n}\big(T^{(n)}\big)\big|=0.\]
\end{theorem}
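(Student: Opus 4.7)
The plan is to upper bound $|\risk_{\Lambda_n}(T^{(n)})|$ by the square root of the Bayes-optimal pair correlation, then use monotonicity in the amount of side information to reduce to the setting of Proposition~\ref{prop:conv_pair_corr} with full interaction range $L=2n+1$, and finally let $\eta\to 0^+$. There is no substantial obstacle: the argument is essentially bookkeeping on top of Proposition~\ref{prop:conv_pair_corr}, and the only mild technicality is to verify that a single null set $\mathcal{A}^c$ depending only on $\delta$ can be chosen.

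Fix $u,v \in \Lambda_n$ and write $Y = Y^{\delta,\eta,L_n}_{\Lambda_n}$. Since $T_{uv}(Y) \in \{-1,+1\}$, the tower property combined with the Cauchy--Schwarz inequality gives
\begin{equation*}
\big(\E[T_{uv}(Y)\theta_u\theta_v]\big)^2 = \big(\E[T_{uv}(Y)\,\E[\theta_u\theta_v|Y]]\big)^2 \le \E[T_{uv}(Y)^2]\cdot\E\big[\E[\theta_u\theta_v|Y]^2\big] = \E\big[\E[\theta_u\theta_v|Y]^2\big].
\end{equation*}
Averaging over $u,v \in \Lambda_n$ and applying $(\frac{1}{N}\sum a_i)^2 \le \frac{1}{N}\sum a_i^2$,
\begin{equation*}
|\risk_{\Lambda_n}(T^{(n)})|^2 \le \frac{1}{|\Lambda_n|^2}\sum_{u,v\in\Lambda_n}\E\big[\E[\theta_u\theta_v|Y]^2\big] =: \varphi^{\se}_{\Lambda_n}(L_n).
\end{equation*}
Because $\E[\E[X|\mathcal{F}]^2]$ is non-decreasing in $\mathcal{F}$ and the $\ell_\infty$-diameter of $\Lambda_n$ is $2n$, extending the interaction range of the side information~\eqref{eq:GOE_side_info} from $L_n$ to $2n+1$ only enlarges the conditioning $\sigma$-algebra, so $\varphi^{\se}_{\Lambda_n}(L_n) \le \varphi^{\se}_{\Lambda_n}(2n+1) = \varphi^{\se}_{\Lambda_n}$, with the latter as in~\eqref{eq:pair_correlation}.

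To conclude, set $\mathcal{A} = \R_+ \setminus \mathcal{D}_\delta$, which has full Lebesgue measure since $\mathcal{D}_\delta$ is countable and depends only on $\delta$. By Proposition~\ref{prop:conv_pair_corr}, for each $\eta \in \mathcal{A}$ we have $\lim_n \varphi^{\se}_{\Lambda_n} = q_\star^2(\delta,\eta)$, so $\limsup_n |\risk_{\Lambda_n}(T^{(n)})| \le q_\star(\delta,\eta)$. Letting $\eta \to 0^+$ along $\mathcal{A}$ and invoking~\eqref{q_star_star}, the hypothesis $\delta < \delta_c$ gives $q_\star(\delta,\eta) \to q_{\star\star}(\delta) = 0$, completing the proof.
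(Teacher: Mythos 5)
Your Cauchy--Schwarz reduction to the Bayes-optimal pair correlation is exactly the paper's first step, and the final limiting argument is fine in principle. The gap is in the middle step, the claim that ``extending the interaction range of the side information from $L_n$ to $2n+1$ only enlarges the conditioning $\sigma$-algebra.'' This is false for the model~\eqref{eq:GOE_side_info}, because the per-observation SNR is normalized by the range: at range $L$ each pair carries signal $\sqrt{\eta/L^d}\,\theta_u\theta_v$. Passing from $L_n$ to $2n+1$ adds observations for distant pairs but simultaneously \emph{weakens} every observation on pairs with $|u-v|_\infty\le L_n$ (from SNR $\eta/L_n^d$ down to $\eta/(2n+1)^d$), so neither observation set is a function of (or a degradation of) the other, and $\varphi^{\se}_{\Lambda_n}(L_n)\le\varphi^{\se}_{\Lambda_n}(2n+1)$ does not follow from monotonicity of $\E[\E[X|\mathcal{F}]^2]$ in $\mathcal{F}$. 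Indeed the total information per vertex is of order $\eta$ in both cases, so there is no reason to expect domination in either direction.

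The paper circumvents this by introducing a \emph{second, independent} GOE side information with its own SNR $\eta_2$ and full range $2n+1$. Conditioning on the union $\{Y^{\delta}_{\Lambda_n},Y^{\eta,L_n}_{\Lambda_n},Y^{\eta_2}_{\Lambda_n}\}$ genuinely refines the $\sigma$-algebra, giving $|\risk_{\Lambda_n}(T)|\le(\tilde\varphi^{\se}_{\Lambda_n})^{1/2}$. One then has to show that the original $(\eta,L_n)$ observations contribute negligibly: the paper proves $\lim_{\eta\to0}\liminf_n(\tilde\varphi^{\se}_{\Lambda_n}-\varphi^{\se}_{\Lambda_n}(\delta,\eta_2))=0$ for almost all $\eta_2$, using that the free energy is $\tfrac12$-Lipschitz in $\eta$, integrating the nonnegative difference of the $\eta_2$-derivatives, and applying Fatou. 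Only after this does one send $\eta_2\to0$ and invoke $q_{\star\star}(\delta)=0$ for $\delta<\delta_c$. Your proof is missing this decoupling lemma, which is the substantive content of the argument; without it the reduction to Proposition~\ref{prop:conv_pair_corr} does not go through.
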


Theorem~\ref{thm:reconstruction} is proved in Section~\ref{sec:proof_main}, and Theorem~\ref{thm:lower_bound} is proved in Section~\ref{sec:proof_13}.

\section{Discussion and related work}
\label{sec:related}
As mentioned earlier, there is a wealth of results for reconstruction problems on the complete graph, sparse and dense regular and Erd\"os-R\'enyi random graphs and trees. See for instance~\cite{singer2011angular,singer2012vector,boumal2016nonconvex,bandeira2013cheeger,bandeira2017tightness,chen2018projected,abbe2017community} and references therein for work using various methods for group synchronization and its applications. However, the study of reconstruction on structured graph models with a lesser degree of exchangeability is fairly limited. A few pointers to the latter category are~\cite{globerson2015hard,abbe2017group,sankararaman2018community,abbe2018graph,abbe2020information,polyanskiy2020application}.
The former class of models, in addition to being analytically tractable, is also known to exhibit an information-computation gap where in a certain regime of parameters, reconstruction (in our case, synchronization) is information-theoretically possible but all known efficient (polynomial-time) algorithms fail at it. 

\smallskip\noindent\textbf{Local algorithm with BEC.} In contrast, Montanari and the author showed in~\cite{alaoui2019computational} that on amenable graphs, and under a stronger model of side information where one receives the value of $\theta_u$ with small probability independently for each vertex $u$---this is called the binary erasure channel (BEC)---optimal reconstruction is possible in a certain asymptotic sense by means of an efficient local algorithm. Therefore, reconstruction on amenable graphs with BEC side information does not exhibit a statistical-computational gap. The local algorithm uses the side information from the BEC in a crucial way to induce, asymptotically, a decoupling among the variables $\theta_u$. Amenability then implies that observations beyond a receding boundary around a given vertex $u$ carries vanishing information about its spin $\theta_u$. Therefore, under BEC, it is possible to estimate each $\theta_u$ by only using information in a ball of bounded radius around $u$. (Synchronization is then trivial as one can return an estimate for $\theta_u\theta_v$ by multiplying the estimates corresponding to $\theta_u$ and $\theta_v$ respectively.) 
The present model of side information~\eqref{eq:GOE_side_info} is weaker than BEC. In particular the sign symmetry of the variables $\theta_u$ is not broken given the observations $Y^{\delta,\eta,L}_{\Lambda_n}$, so one cannot estimate individual spins better than random and the strategy employed in~\cite{alaoui2019computational} fails. One can instead focus on the task of synchronizing two faraway vertices. 

\smallskip\noindent\textbf{Synchronization via a multiscale algorithm.} Our algorithm is based on a multiscale procedure similar to the one used in~\cite{abbe2017group}. Our main innovation concerns the base layer of the multiscale hierarchy: we partition the lattice into overlapping patches of constant size (see Figure~\ref{fig:block_core}) and construct a \emph{renormalized} instance of the synchronization problem where patches play the role of vertices, i.e., we assign a global spin to each patch and construct a new synchronization variables between every two neighboring patches. We exploit properties of the posterior measure of $\theta$ when augmented with the side information to show that the effective noise parameter for the renormalized instance can be made as small as one desires by making the patches large enough.      
We then adapt the multiscale algorithm of~\cite{abbe2017group} and apply it to this renormalized instance, therefore synchronizing any two vertices by traveling the path connecting them in tree encoding the multiscale hierarchy. The analysis of this second stage also requires significant effort as the renormalized instance is a more complicated graphical model than the original synchronization instance.       

\smallskip\noindent\textbf{The synchronization threshold.} The authors of~\cite{abbe2017group} consider a definition of synchronization slightly different from ours. The latter is considered possible if one can synchronize an \emph{arbitrary} pair of far away vertices, and the synchronization threshold is defined as
\[\tilde{\delta}_c := \inf \Big\{\delta >0 ~:~ \liminf_{|u-v| \to \infty} \, \sup_{T_{uv}} \, d_{\sTV} \Big(\text{Law}\big(T_{uv}(Y^{\delta}_{\Z^d}) \theta_u\theta_{v}\big) , \bar{\nu}\Big) >0 \Big\},\]  
where $Y^{\delta}_{\Z^d} = \{Y^{\delta}_{uv}: (u,v) \in \mathbb{E}^d\}$, $T_{uv} : \{-1,+1\}^{\mathbb{E}^d} \to \{-1,+1\}$ measurable, $\bar{\nu}$ is the uniform distribution over $\{-1,+1\}$, and $d_{\sTV}$ is the total variation distance. 
This definition relies only on the lattice observations $Y^{\delta}_{uv}, (u,v) \in \E^d$ and not on any side information. We find it more convenient to work with our definition of $\delta_c$, Eq.~\eqref{rec_threshold}, which is more suitable to algorithmic reasoning. Nevertheless, in light of Theorem~\ref{thm:reconstruction} and Theorem~\ref{thm:lower_bound} we find it plausible to conjecture that the equality $\tilde{\delta}_c = \delta_c$ holds for all $d \ge 2$.

\smallskip\noindent\textbf{The Ising spin glass.}  
In the absence of measurements on the edges of $\Z^d$, i.e., $\delta=0$, the quantity $q_{\star}(0,\eta)$ in Proposition~\ref{prop:conv_pair_corr} has a well understood, rather explicit expression~\cite{lesieur2015phase,lelarge2017fundamental}. In particular it is known that $q_{\star}(0,\eta)=0$ for all $\eta \le 1$ (therefore $q_{\star\star}(0)=0$) and $q_{\star}(0,\eta)>0$ for $\eta >1$. 
When $\delta>0$, little is known about $q_{\star}(\delta,\eta)$. For instance, the pure lattice case $\eta=0$ is of interest to statistical physics, since the posterior measure of $\theta$ is equivalent to the Ising spin glass measure on $\Z^d$, on the so-called Nishimori line: A simple application of the Bayes rule reveals 
\[\P(\theta | Y^{\delta}_{\Lambda_n}) \propto e^{\beta \sum_{(u,v)\in E_n} Y^{\delta}_{uv}\theta_u\theta_v},~~~\beta=  \frac{1}{2}\log\big(\frac{1-p}{p}\big).\]
Let us write $Y^{\delta}_{uv} = Z_{uv}\theta_{0u}\theta_{0v}$ where $\P(Z_{uv}=+1)=1-p=1-\P(Z_{uv}=-1)$ as per Eq.~\eqref{eq:lattice_model} (the added subscript in $\theta_0$ is to distinguish it from a generic vector $\theta$ in the above formula). Let us also define $\sigma_u = \theta_u\theta_{0u}$ for all $u \in \Lambda_n$. Then the push-forward of $\P(\cdot | Y^{\delta}_{\Lambda_n})$ by $\theta \to \sigma$ is of the form
\[\mu_{\Lambda_n}(\sigma) \propto e^{\beta \sum_{(u,v)\in E_n} Z_{uv}\sigma_u\sigma_v}.\]
The relation $\beta=  \frac{1}{2}\log\big(\frac{1-p}{p}\big)$ between the inverse temperature and the bias of the disorder variables $Z_{uv}$ defines a curve in the plane $(p,\beta)$ called the Nishimori line; see also~\cite[Chapter 4]{NishimoriBook}. Furthermore, $\delta_c = 1-2p_c$ as defined in~\eqref{rec_threshold} corresponds to \emph{the tri-critical point} $(p_c,\beta_c)$ conjecturally separating three distinct phases of the model. 
A few predictions are available for its location in $d=2$: Nonrigorous analytical considerations based on the replica method and special symmetries of the model have lead the authors of~\cite{maillard2003symmetry} to put forward the conjecture that $p_c$ solves the equation $-p\log p - (1-p)\log (1-p) = \frac{\log 2}{2}$ with  $p<\half$, which leads to $p_c \simeq 0.110028$, i.e., $\delta_c^2 \simeq 0.608312$. However Ohzeki~\cite{ohzeki2009locations} suggests that this is only an approximation which can be further improved via a renormalization group analysis, and they advance the value $p_c \simeq 0.109168$ ($\delta_c^2 \simeq 0.610939$). On the other hand, Monte-Carlo simulations for $d=2$ made by Toldin, Pelissetto and Vicari~\cite{toldin2009strong} seem to agree with the theoretical predictions to a good extent and indicate that $p_c \simeq 0.10917$ ($\delta_c^2 \simeq 0.61099$.) 

On the rigorous mathematical side, explicit bounds on $\tilde{\delta}_c$ are known in the recent literature: It was shown in~\cite{abbe2017group} that $p_{\sperc}(d) \le \tilde{\delta}_c  < 1$, where $p_{\sperc}(d)$ be the critical threshold of Bernoulli bond percolation on $\Z^d$. Abb\'e and Boix~\cite{abbe2020information}, and Polyanskiy and Wu~\cite{polyanskiy2020application} independently introduced a more refined \emph{information--percolation} argument implying that synchronization is impossible if $\delta^2 \le p_{\sperc}(d)$, i.e., $\tilde{\delta}_c^2 \ge p_{\sperc}(d)$. The iquadratic mprovement comes from a comparison with a more accurate, potentially inhomogeneous, percolation model where an edge $(u,v)\in\E^d$ is open with probability $I_2(Y^{\delta}_{uv} ; \theta_u,\theta_v )$, $I_2$ being the mutual information in $\chi^2$ divergence. In the two-dimensional case $d=2$, this latter bound reads $\tilde{\delta}_c^2 \ge \frac{1}{2}$, to be compared with the above-mentioned predictions. 

At the opposite end of the spectrum, one would expect mean-field behavior when $d$ is large, and the critical point of the model should approach its analogue on the infinite regular tree of the same degree, which this given by the Kesten-Stigum threshold for reconstruction on the $2d$-regular tree with binary alphabet~\cite{evans2000broadcasting}. This leads to a natural conjecture: $\tilde{\delta}_c^2 \sim 1/(2d)$ as $d \to \infty$.

\paragraph{Organization.} We present the renormalization procedure, which is the first step of the multiscale algorithm in Section~\ref{sec:algorithm}. 
Section~\ref{sec:renormalization} is devoted to statements of the key theorems enabling the analysis. The rest of multiscale construction is presented and analyzed in Section~\ref{sec:multiscale}. Sections~\ref{sec:posteriors},~\ref{sec:proof_everything} and~\ref{sec:free_energy} are devoted to the technical results and proofs underlying the key theorems of  Section~\ref{sec:renormalization}. 

\paragraph{Notation.} We will frequently take limits of numerical quantities which are only defined almost everywhere or everywhere except on a countable number of points with respect to a real-valued parameter, see e.g., the definition of $q_{\star\star}$, Eq.~\eqref{q_star_star}. It will be implicitly understood and with no further precision that the limits are taken along sequences avoiding the exceptional problematic set. We use the asymptotic notation $\smallo_{L,\eta}(1)$ for a quantity tending to zero when $L \to \infty$ followed by $\eta \to 0$ (in this order).

\begin{figure}[t]
\centering
\includegraphics[width=.4\textwidth]{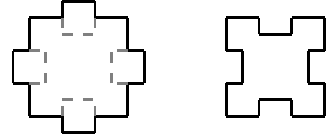}
\caption{Left: A block bound by black solid lines. The dashed inner boundaries define the surrounding square joints. Right: The core of a block, i.e., a block deprived of its joints.}
\label{fig:block_core}
\end{figure}

\section{The synchronization algorithm}
\label{sec:algorithm}

In this section we describe the synchronization algorithm which underlies Theorem~\ref{thm:reconstruction}. 
\paragraph{1.\ Partitioning.} We first cover the lattice $\Lambda_n$ with pairwise interlocking blocks:  
We consider a tiling of $\R^d$ with translates of the cube $C = [-\half,\half]^d$ along the coordinate directions. 
We put translates of the cube $\frac{1}{3}C$ at the midpoint of every edge of the integer lattice: these are all sets $\frac{1}{3}C + (k+\half) e_i$ where $k \in \Z$ and $(e_1,\cdots,e_d)$ is the standard unit basis of $\R^d$.
We call these translates \emph{joints}. Note that each joint overlaps with two adjacent elements of the tiling. 
We define a \emph{block} to be the union of an element of the tiling with its $2d$ surrounding joints; see Figure~\ref{fig:block_core} (left). Hence two adjacent blocks have a non empty intersection which is precisely the joint between them; see Figure~\ref{fig:partition}. We also define the \emph{core} of a block $A$ as $A^{\bullet} = A \setminus \cup J$ where the union is over all joints surrounding $A$; see Figure~\ref{fig:block_core} (right).
Now we dilate all distances by a factor $\scL$ and then intersect the dilated sets with $\Z^d$: 
\begin{equation*}
\bbB = \Big\{ (\scL\cdot A) \cap \Z^d  ~:~ A \mbox{ is a block} \Big\},\quad 
\bbJ = \Big\{ (\scL\cdot J) \cap \Z^d  ~:~ J \mbox{ is a joint} \Big\}.
\end{equation*}
The parameter $\scL\ge 1$ is an integer representing the scale of the renormalization. We retain the terminology of blocks, joints and cores for the elements of $\bbB$, the elements of $\bbJ$ and blocks deprived of their surrounding joints, respectively. 
We define the \emph{intersection graph} $\bbL = (\bbB,\bbJ)$ to have blocks as its vertices and joints as its edges; $\bbL$ is clearly isomorphic to the integer lattice. We write $B \sim B'$ for adjacency in $\bbL$.
Finally, we let  $\bbB_n = \{B \in \bbB : B \cap \Lambda_n \neq \emptyset\}$. For convenience we ignore the blocks $B$ that fall on the boundary of $\Lambda_n$, as their contribution to $\risk_{\Lambda_n}$, Eq.~\eqref{eq:risk}, is negligible.  

We  set the range of the GOE interaction~\eqref{eq:GOE_side_info} to be $L = 2\scL$. This insures that an observation $Y^{\eta,L}_{xy}$ is available for every two vertices $x,y \in B$ for every block $B \in \bbB_n$.

\begin{figure}[t]
\centering
\includegraphics[width=.3\textwidth]{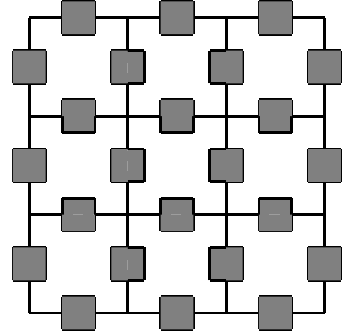}
\caption{Interlocking blocks in $d=2$. Cores are in white and bound by solid lines. Joints are in gray.}
\label{fig:partition}
\end{figure}

\paragraph{2.\ Processing side information.} Without loss of generality  we can assume we have access to an arbitrary finite number of independent copies of the GOE side information $Y_{\Lambda_n}^{\eta,L}$, Eq.\eqref{eq:GOE_side_info} with arbitrary SNR $\eta$. Indeed, given observations of the form~\eqref{eq:GOE_side_info} we can artificially create two independent copies with any SNR smaller than $\eta$ by adding and subtracting an independent normal r.v.\ with appropriate variance to/from $Y^{\eta,L}_{xy}$ for each pair $x,y$. Recursing this operation $\ell$ times, we get $2^\ell$ copies of~\eqref{eq:GOE_side_info} with any desired SNR (smaller than $\eta$).

Now we associate to the blocks $B \in \bbB_n$ independent (conditionally on $\theta$) sets of side information produced as follows. 
 Let $t \in [0,1]$ be chosen, for instance, uniformly at random. This parameter is shared across all blocks $B\in \bbB_n$. Now we fix a block $B$ and do the following: 
 For every $B' = B+\scL a$ where $a\in\{ \pm e_1,\cdots, \pm e_{d}\}$ (so that $B' \sim B$), let 
\begin{align*}
Y^{\eta, (a,\bullet)}_{uv} &=  \sqrt{\frac{t\eta}{|B|}}\theta_{u}\theta_{v} + Z^{(a,\bullet)}_{uv} ~\mbox{for all}~ x,y \in B,\\
Y^{\eta, (a,\cap)}_{uv} &= \sqrt{\frac{(1-t)\eta}{|B\cap B'|}}\theta_{u}\theta_{v} + Z^{(a,\cap)}_{uv} ~\mbox{for all}~ x,y\in B\cap B',\\
Y^{\eta, (a,\setminus)}_{uv} &= \sqrt{\frac{(1-t)\eta}{|B\setminus B'|}}\theta_{u}\theta_{v} + Z^{(a,\setminus)}_{uv} ~\mbox{for all}~ x,y \in B \setminus B',
\end{align*}
where $\big\{(Z^{(a,\bullet)}_{uv})_{u,v\in B},(Z^{(a,\cap)}_{uv})_{u,v\in B\cap B'},(Z^{(a,\setminus)}_{uv})_{u,v\in B\setminus B'}\big\}$ are all mutually independent standard normal random variables which are also independent from everything else.
Importantly, these noise r.v.'s are independent from those used in the construction of side information on a different block. 
The fact that these observations are available is due the relation $L =2\scL$.  
We concatenate this side information in a convenient notation:
\begin{align}\label{eq:processed_side_info}
Y^{\eta}_B &:= \bigcup_{\underset{B' = B+\scL a}{a \in \{\pm e_1,\cdots,\pm e_d\}}} \Big(\{Y^{\eta, (a,\bullet)}_{xy}: (x,y)\in B^2\} 
\cup \{Y^{\eta, (a,\cap)}_{xy}: (x,y)\in (B \cap B')^2\}\\ 
&\hspace{4cm}\cup \{Y^{\eta, (a,\setminus)}_{xy}: (x,y)\in (B\setminus B')^2 \}\Big).\nonumber
\end{align}
With this notation let us reiterate that $Y^{\eta}_{B}$ and $Y^{\eta}_{B'}$ are independent conditionally on $\theta$ for $B \neq B'$.
This construction is designed to `lock' certain overlaps pertaining to the core and the joints of the block $B$ together. 
This locking property will be explained in greater detail in Section~\ref{sec:overview_proof}.

\paragraph{3.\ Sampling.} For each block $B \in \bbB_n$, we independently generate a sample $\big(\theta^{B}_x\big)_{x\in B^{}}$ from the posterior distribution of $(\theta_{x})_{x \in B}$ given the lattice observations $Y^{\delta}_{B} = \{Y^{\delta}_{uv} : u,v\in B, |u-v|=1\}$ and the side information $Y^{\eta}_{B}$ as defined above:   
\begin{equation}\label{eq:sample_posterior}
\big(\theta^{B}_x\big)_{x\in B^{}} \sim \P\big(\cdot | Y^{\delta}_{B} ,Y^{\eta}_{B}\big).
\end{equation}

\paragraph{4.\ New synchronization variables.} For every two adjacent blocks $B \sim B' \in \bbB_n$, we construct a new  synchronization variable $\widetilde{Y}_{B,B'}$ by taking the sign of the inner product of $(\theta^{B}_x)_{x\in B\cap B'}$ and $(\theta^{B'}_x)_{x\in B\cap B'}$:   
\begin{equation}\label{eq:new_observations}
\widetilde{Y}_{B,B'} = \sign\Big(\sum_{x\in B \cap B'} \theta^{B}_x\theta^{B'}_x\Big).
\end{equation}

\paragraph{5.\ Multiscale scheme.}  We apply the multiscale scheme presented in Section~\ref{sec:multiscale} on the new observations $(\widetilde{Y}_{B,B'})_{B\sim B'}$ to synchronize the blocks and produce estimates $\hat{\sigma}_B$ for every block $B \in \bbB_n$.  

\paragraph{6.\ Final estimates.} We output final estimates for every vertex $x$ in the original lattice $\Lambda_n$: $\hat{\theta}_x = \hat{\sigma}_B \theta^B_x$ where $B \in \bbB_n$ is such that $x \in B$. (If $x$ belongs to the intersection of two adjacent blocks then we choose one of the blocks containing it arbitrarily.) We then let $T_{xy} = \hat{\theta}_x\hat{\theta}_y$ for all pairs $x,y \in \Lambda_n$.

 \section{Renormalization}
 \label{sec:renormalization}
 The above algorithm builds a bigger lattice consisting of blocks and constructs new pairwise measurements between blocks (Step 4). We explain here the purpose of this operation.  Let us   
 assign a spin $\tilde{\theta}_{B} \in \{\pm 1\}$ to every block $B\in \bbB_n$ based on the inner product of $(\theta^{B}_x)_{x\in B}$ as constructed in Step 3, and the hidden assignment $\theta$:  
 \begin{equation}\label{eq:block_spins}
 \tilde{\theta}_{B} = \sign\Big(\sum_{x\in B}\theta^{B}_x\theta_x\Big).
 \end{equation}
These variables are of course hidden from the observer as they depend on $\theta$.   
 This provides us with a new synchronization instance $\{(\tilde{\theta}_{B})_{B\in \bbB_n}, (\widetilde{Y}_{B,B'})_{B\sim B'}\}$ on the rescaled lattice $\bbL$. So
 we mapped the original synchronization instance $\{(\theta_u)_{u\in \Lambda_n},(Y_{uv}^{\delta})_{(u,v)\in E_n}\}$ on $(\Lambda_n,E_n)$ to a new, \emph{renormalized} synchronization instance $\{(\tilde{\theta}_{B})_{B\in \bbB_n}, (\widetilde{Y}_{B,B'})_{B\sim B'}\}$ on $\bbL$ where the role of vertices is now played by blocks\footnote{Observe however that this new synchronization instance  is a more complicated graphical model since it does not have the same conditional independence structure as the model as defined by~\eqref{eq:lattice_model}}. The observations $\widetilde{Y}_{B,B'}$ are meant to be noisy versions of the products $\tilde{\theta}_{B}\tilde{\theta}_{B'}$, the same way $Y_{uv}$ is a noisy version of the product $\theta_u\theta_v$. Moreover, the outputs $(\hat{\sigma}_B)_{B \in \bbB_n}$ produced in Step 5 of the algorithm are meant to be such that $\hat{\sigma}_B\hat{\sigma}_{B'}$ is a good estimate of $\tilde{\theta}_B\tilde{\theta}_{B'}$ for all pairs $B,B'$.

The main point of this mapping is to increase the signal strength: whereas $\P(Y^{\delta}_{uv} = -\theta_u \theta_v) = p$ is a constant of the problem, we will show that the \emph{effective noise parameter} of the renormalized model
\begin{equation}\label{eq:renormalized_error}
 \scp:= \P\left(\widetilde{Y}_{B,B'} = - \tilde{\theta}_{B}\tilde{\theta}_{B'}\right),
 \end{equation}
tends to zero as the renormalization rescale $\scL$ becomes large.   
\begin{theorem}\label{thm:renormalization_flow}
Under Assumption~\ref{assump:lipschitz}, for $\delta> \delta_c$ and almost every $t\in [0,1]$,
\[\lim_{\eta\to 0^+} \lim_{\scL\to \infty} \scp= 0.\]
\end{theorem}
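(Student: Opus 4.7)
The plan is to establish first that each averaged overlap between the posterior sample $\theta^{B}$ and the true signal, measured over any of the three regions $B$, $B\cap B'$, and $B\setminus B'$, concentrates on $\pm q_{\star\star}(\delta)$. For $S\in\{B,B\cap B',B\setminus B'\}$ define
\[
M_{S}^{B} \;:=\; \frac{1}{|S|}\sum_{x\in S}\theta^{B}_{x}\theta_{x}.
\]
The engineering of the side information in Step 2 is designed precisely so that the three SNR components $t\eta$, $(1-t)\eta$, $(1-t)\eta$ couple linearly to the squared overlaps $|S|(M_{S}^{B})^{2}$ on $S=B$, $B\cap B'$ and $B\setminus B'$ respectively. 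By the Nishimori identity, each $\E[(M_{S}^{B})^{2}]$ can be written as a derivative of the average free energy of the block with respect to the corresponding SNR, and the limit $\scL\to\infty$ followed by $\eta\to 0^{+}$ identifies this derivative with $q_{\star\star}^{2}(\delta)$ as soon as the free energy is sufficiently regular in the SNR; this is what Assumption~\ref{assump:lipschitz} supplies, via the key theorems in Section~\ref{sec:renormalization}. Upgrading this second-moment convergence to a convergence in probability of $|M_{S}^{B}|$ to $q_{\star\star}(\delta)$ will come from a standard self-averaging bound on the Nishimori line, and randomising $t\in[0,1]$ ensures that $t\eta$ and $(1-t)\eta$ simultaneously avoid the countable exceptional sets of Proposition~\ref{prop:conv_pair_corr}.

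Given the three concentration statements, sign consistency follows from an elementary convex-combination argument. Writing $\alpha=|B\cap B'|/|B|\in(0,1)$, a constant depending only on $d$, one has the deterministic identity
\[
M_{B}^{B} \;=\; \alpha\,M_{B\cap B'}^{B} \;+\; (1-\alpha)\,M_{B\setminus B'}^{B}.
\]
If all three overlaps have magnitude close to $q_{\star\star}(\delta)>0$, the triangle inequality forces them to share a common sign with probability tending to one. Since $\tilde\theta_{B}=\sign(M_{B}^{B})$ by definition of the block spin in~\eqref{eq:block_spins}, this yields $\sign(M_{B\cap B'}^{B})=\tilde\theta_{B}$ with high probability, and the symmetric statement holds for $B'$.

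The final step relates the block spins to the new observation. Writing $\sigma^{B}_{x}=\theta^{B}_{x}\theta_{x}$, so that $\widetilde{Y}_{B,B'}=\sign\big(\sum_{x\in B\cap B'}\sigma^{B}_{x}\sigma^{B'}_{x}\big)$, the independence across blocks of the noises in the side information~\eqref{eq:processed_side_info} makes $\theta^{B}$ and $\theta^{B'}$ conditionally independent given $\theta$. A Hoeffding bound then controls the fluctuation of $\sum_{x\in B\cap B'}\sigma^{B}_{x}\sigma^{B'}_{x}$ around its conditional mean at scale $O(\sqrt{|B\cap B'|})$, while the previous two steps together with the Nishimori identity (which equates posterior-mean overlaps with replica overlaps) give that this conditional mean equals $|B\cap B'|\,q_{\star\star}^{2}(\delta)\,\tilde\theta_{B}\tilde\theta_{B'}+\smallo_{\scL,\eta}(|B\cap B'|)$ in probability. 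Taking signs yields $\widetilde{Y}_{B,B'}=\tilde\theta_{B}\tilde\theta_{B'}$ with probability tending to one, and hence $\scp\to 0$.

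The main obstacle I anticipate is the second-moment-to-concentration upgrade in the first step: knowing $\E[(M_{S}^{B})^{2}]\to q_{\star\star}^{2}(\delta)$ is not enough to conclude $|M_{S}^{B}|\to q_{\star\star}(\delta)$ in probability, and a self-averaging bound---analogous to those used on the Nishimori line for mean-field Ising spin glasses, and crucially exploiting the regularity provided by Assumption~\ref{assump:lipschitz}---will be required. A secondary subtlety is making the three concentration statements \emph{joint} rather than marginal so that the sign-matching argument is uniform across $S$; this is where the averaging over $t\in[0,1]$ plays its role, and is the source of the ``almost every $t$'' qualification in the theorem statement.
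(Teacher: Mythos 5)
Your first two steps are sound and broadly parallel what the paper actually uses: items 1--2 of Theorem~\ref{thm:everything} together with the locking property (Proposition~\ref{prop:locking}) do give concentration of $|M_B|$, $|M_{B\cap B'}|$, $|M_{B\setminus B'}|$ on $q_{\star\star}(\delta)$, and your convex-combination identity $M_B=\alpha M_{B\cap B'}+(1-\alpha)M_{B\setminus B'}$ correctly forces a common sign when $q_{\star\star}(\delta)>0$. The paper's own proof is in fact shorter: it writes $1-2\scp=\E[\varepsilon_{B,B'}]$ with $\varepsilon_{B,B'}=\sign(W_{B,B'}M_BM_{B'})$ and reads the conclusion off the three moment statements of Theorem~\ref{thm:everything}, the decisive one being $\E[W_{B,B'}M_BM_{B'}]\to q_{\star\star}^3(\delta)$.

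The genuine gap is in your final step, and it is exactly where the hard part of the theorem lives. First, the conditional mean you invoke is zero: conditionally on $Y_B,Y_{B'}$ the samples are independent and the posteriors are sign-symmetric, so $\E\big[\sum_{x\in B\cap B'}\sigma^B_x\sigma^{B'}_x\,\big|\,Y_B,Y_{B'}\big]=\sum_x\langle\theta_x\rangle_B\langle\theta_x\rangle_{B'}=0$; the ``signal'' $q_{\star\star}^2\tilde\theta_B\tilde\theta_{B'}$ is carried entirely by the correlations between coordinates of a single sample, not by the mean. Second, Hoeffding does not apply: the summands $\sigma^B_x\sigma^{B'}_x$ are not conditionally independent across $x$, because the coordinates of one posterior sample $\theta^B$ are strongly correlated. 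If you try to repair this by writing $\sigma^B_x=M^B_{B\cap B'}+\epsilon^B_x$, so that $W_{B,B'}=M^B_{B\cap B'}M^{B'}_{B\cap B'}+\frac{1}{|B\cap B'|}\sum_x\epsilon^B_x\epsilon^{B'}_x$, controlling the remainder requires bounding $\frac{1}{|B\cap B'|^2}\sum_{x,y}\E[\theta_x\theta_y|Y_B]^2$ \emph{after removing its rank-one part}; the raw quantity converges to $q_{\star\star}^2>0$ and is not small. Showing the remainder vanishes is equivalent to showing the susceptibility matrix $\chi_{xy}=\frac{1}{|A|}\E[\theta_x\theta_y|Y_A]$ is asymptotically rank one, which is precisely the operator-norm argument ($\E\|\chi\|_{\op}\to q_{\star\star}$ versus $\E\trace(\chi^2)\to q_{\star\star}^2$) behind item 3 of Theorem~\ref{thm:everything}, relying on the decoupling Lemmas~\ref{lem:edge_vertex_overlap}--\ref{lem:decoupling_variance}. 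Your proposal has no substitute for this ingredient, so as written the sign of $W_{B,B'}$ relative to $\tilde\theta_B\tilde\theta_{B'}$ is not established.
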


Therefore, for $\scL$ large (and $\eta$ small), it is easier to solve the synchronization problem on the rescaled lattice $\bbL$ since this problem is less noisy:   
we cary out in Section~\ref{sec:multiscale} a multiscale analysis to synchronize the blocks $B \in \bbB_n$ given the new synchronization variables $(\widetilde{Y}_{B,B'})_{B\sim B'}$, Eq.~\eqref{eq:new_observations} and show the following result.
\begin{theorem}\label{thm:multiscale}
Under Assumption~\ref{assump:lipschitz}, for $\delta>\delta_c$, there exists $\eta_0>0$ such that for almost all $\eta\le \eta_0$, there exists $L_0 \ge 1$ such that for all $\scL \ge L_0$ and almost every $t \in [0,1]$, the following holds. For every pair of blocks $B_1,B_2 \in \bbB_n$, there exists an estimator $\widetilde{T}_{B_1,B_2} =  \hat{\sigma}_{B_1}\hat{\sigma}_{B_2} \in \{\pm 1\}$ which takes as input the new observations $(\widetilde{Y}_{B,B'})_{B\sim B'}$ and satisfies 
\[\P\Big(\widetilde{T}_{B_1,B_2} \neq \tilde{\theta}_{B_1}\tilde{\theta}_{B_2}\Big) \le \frac{1}{21}.\]
Moreover, the estimators $(\widetilde{T}_{B_1,B_2})_{B_1,B_2 \in \bbB_n}$ can be computed in time $\bigo(n^{2d})$. 
\end{theorem}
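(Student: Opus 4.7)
The strategy is to adapt the multiscale synchronization algorithm of~\cite{abbe2017group} to the renormalized instance on $\bbL$, using Theorem~\ref{thm:renormalization_flow} to initialize the recursion with arbitrarily small noise. First I would fix a small threshold $p_\star>0$ depending only on $d$ and on the parameters of the multiscale scheme described below, and then invoke Theorem~\ref{thm:renormalization_flow} to choose $\eta_0$, $L_0$, and a generic $t\in[0,1]$ so that $\scp < p_\star$ whenever $\eta\le\eta_0$ and $\scL\ge L_0$. At this point the renormalized instance $\{(\tilde\theta_B)_{B\in\bbB_n},\,(\widetilde Y_{B,B'})_{B\sim B'}\}$ looks like a standard $\Z_2$-synchronization problem on $\bbL\simeq\Z^d$, but with per-edge flip probability $\scp$ as small as we like.

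The core of the proof builds a hierarchy of nested boxes $\bbL=\bbL^{(0)}\supset\bbL^{(1)}\supset\cdots$ on $\bbL$, where $\bbL^{(k)}$ groups $\ell^d$ scale-$(k-1)$ super-blocks into a single scale-$k$ super-block for some integer $\ell$ chosen large enough. Since $|\bbB_n|=\bigo((n/\scL)^d)$, the whole of $\bbB_n$ sits inside one scale-$k_\star$ super-block after $k_\star=\bigo(\log n)$ scales. For each pair of adjacent scale-$k$ super-blocks we construct a relative-sign estimator by first orienting both super-blocks internally via the scale-$(k-1)$ estimators, and then aggregating, via a Chernoff-type majority vote, a large family of renormalized observations $\widetilde Y_{B,B'}$ crossing the face that separates them. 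Writing $p_k$ for the per-edge error at scale $k$, the analysis reduces to a recursive bound of the form $p_{k+1}\le F(p_k)$ with $F(p)\le p/2$ whenever $p\le p_\star$. Iterating from $p_0=\scp<p_\star$ then yields a per-edge error much smaller than $1/21$ at the top scale, uniformly in $n$, and in particular the claimed bound on $\P(\widetilde T_{B_1,B_2}\ne \tilde\theta_{B_1}\tilde\theta_{B_2})$.

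The main obstacle, and the principal departure from~\cite{abbe2017group}, is that the renormalized observations are \emph{not} independent: any two edges of $\bbL$ sharing a block $B$ share the sample $(\theta_x^{B})_{x\in B}$ and are therefore statistically coupled even after conditioning on the hidden truth $\theta$. To resolve this I would exploit the fact that, by Step 2 of the algorithm in Section~\ref{sec:algorithm}, the per-block samples $(\theta^{B})_{B\in\bbB_n}$ are mutually \emph{conditionally} independent given $\theta$, so that any family of renormalized edges that is pairwise vertex-disjoint in $\bbL$ is conditionally independent given $\theta$. The majority votes at every scale should therefore be implemented along vertex-disjoint ``braids'' through the separating face; the $d$-dimensional geometry allows us to choose polynomially many such disjoint braids across a face of side $\ell$, provided $\ell$ is large enough. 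The constant-factor loss from discarding overlapping paths is absorbed into the choice of $p_\star$.

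The runtime statement follows by scale-by-scale accounting: computing the signs $(\hat\sigma_B)_{B\in\bbB_n}$ is a bottom-up traversal of the hierarchy costing $\bigo(|\bbB_n|)=\bigo(n^d)$ operations up to an $\ell$-dependent constant, after which the $\bigo(n^{2d})$ pairwise products $\widetilde T_{B_1,B_2}=\hat\sigma_{B_1}\hat\sigma_{B_2}$ can be read off in $\bigo(n^{2d})$ time, which dominates the total cost.
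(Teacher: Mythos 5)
Your overall plan — renormalize to make $\scp$ small via Theorem~\ref{thm:renormalization_flow}, then run an AMMSS-style hierarchical majority-vote scheme on $\bbL$ — is the right one and matches the paper. But there is a genuine gap in the hierarchy you propose: you take a \emph{fixed} branching factor $\ell$ at every scale and claim a contraction $p_{k+1}\le F(p_k)\le p_k/2$. The recursion on the probability that a super-block is ``bad'' is indeed benign with constant $\ell$ (the honest-vote failure probability decays doubly exponentially in $k$ since the face contains $\ell^{(d-1)k}$ level-$0$ edges). What this bookkeeping misses is the \emph{deterministic accumulation of misorientations}: even a good scale-$k$ super-block may contain one bad sub-block, and the level-$0$ blocks descended from bad sub-blocks are permanently misaligned with the super-block's internal frame. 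These misaligned blocks occupy a fraction $\asymp 1/\ell^{d-1}$ of the separating face at each scale, and since orientations are inherited bottom-up this fraction only grows, totalling $\asymp k/\ell^{d-1}$ after $k$ scales. Once it exceeds the vote margin ($\tfrac{9}{10}\scdelta$ up to constants), the majority vote at scale $k$ can be wrong \emph{on the good event}, so the recursion $p_{k+1}\le p_k/2$ breaks down for $k\gtrsim \ell^{d-1}$, i.e.\ well before $k_\star=\Theta(\log n)$. This is exactly why the paper uses scales $\ell_k\asymp \kappa (k+1)^2$ growing with $k$ and imposes the summability condition $\mathbf{A2}$, $\sum_k \ell_k^{-(d-1)}\le \tfrac1{20}$, which keeps the cumulative misorientation fraction $\epsilon_k$ below $\tfrac1{10}$ uniformly in $k$ (Lemma~\ref{lem:induction_agreeable}, item 4). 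With constant $\ell$ that series diverges and the argument cannot close.

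A secondary issue is your treatment of dependence. Conditional independence of the votes given $\theta$ (which vertex-disjointness in $\bbL$ does give you) is not enough for a Chernoff bound around the unconditional mean $\scdelta$: the conditional means $\E[Z_{B,B'}\mid\theta]$ fluctuate with $\theta$, and two $\bbL$-vertex-disjoint edges can still involve \emph{overlapping} blocks of $\Z^d$ (adjacent blocks share a joint), so you would need an extra concentration step over $\theta$. The paper avoids this entirely by $2$-coloring the face and keeping only edges in $\Xi(B_1,B_2)$ whose endpoints are pairwise non-adjacent, hence non-overlapping as subsets of $\Z^d$; the resulting votes are genuinely i.i.d.\ and Bennett's inequality applies directly. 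Finally, note that the target bound $\P(\widetilde T_{B_1,B_2}\ne\tilde\theta_{B_1}\tilde\theta_{B_2})\le \tfrac1{21}$ is obtained by a union bound over all ancestors of $B_1$ and $B_2$ being good and agreeable (condition $\mathbf{A3}$), not by a single ``top-scale per-edge error''; your write-up conflates the two. The runtime accounting is fine.
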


The proof of Theorem~\ref{thm:multiscale} relies on an adaptation of an approach already used in~\cite{abbe2017group}, augmented with the result of Theorem~\ref{thm:renormalization_flow}. The latter theorem is a consequence of a set of several asymptotic statements concerning intra- and inter-block overlaps which we collect in the next theorem. 

For $\big(\theta^{B}_x\big)_{x\in B} \sim \P\big(\cdot | Y^{\delta}_B ,Y^{\eta}_B\big)$ and $\big(\theta^{B'}_x\big)_{x\in B'} \sim \P\big(\cdot | Y^{\delta}_{B'} ,Y^{\eta}_{B'}\big)$, we define the correlation between samples on adjacent blocks $B\sim B'$:  
\begin{equation}\label{eq:overlap_intersection}
W_{B,B'} := \frac{1}{|B \cap B'|}  \sum_{x\in B \cap B'} \theta^{B}_x\theta^{B'}_x,
\end{equation} 
and between samples and the ground truth within a block $B \in \bbB_n$:
\begin{equation}\label{eq:overlap_block}
M_B :=  \frac{1}{|B|} \sum_{x\in B}\theta^{B}_x \theta_x,\quad \mbox{and} \quad M_{B^{\bullet}} :=  \frac{1}{|B^{\bullet}|}\sum_{x\in B^{\bullet}}\theta^{B}_x \theta_x,
\end{equation}
where $B^{\bullet}$ is the core of the block $B$. 

\begin{theorem}\label{thm:everything}
Under Assumption~\ref{assump:lipschitz},  for all $\delta>\delta_c$ and almost every $t\in [0,1]$, the following holds in the limit $\scL \to \infty$ followed by $\eta\to 0^+$: 
\begin{enumerate}
\item For every block $B\in \bbB_n$, 
\[ \E[M_B^2]  \longrightarrow q_{\star\star}^2(\delta), ~~ \E\big[(M_B - M_{B^{\bullet}})^2\big] \longrightarrow 0, ~~\mbox{and}~~ \var(M_{B}^2) \longrightarrow 0.\]
\item For every pair of adjacent blocks $B\sim B'$, 
\[ \E[W_{B,B'}^2] \longrightarrow q_{\star\star}^2(\delta), ~~\mbox{and}~~  \var(W_{B,B'}^2) \longrightarrow 0.\]
\item For every pair of adjacent blocks $B\sim B'$,
\[ \E[W_{B,B'} M_B M_{B'}] \longrightarrow q_{\star\star}^3(\delta).\]
\end{enumerate}
\end{theorem}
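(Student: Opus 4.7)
The plan is to leverage the Nishimori identity together with Gaussian concentration of the free energy of the block-wise Gibbs measure, and to import the limiting characterization of the expected pair correlation from the definition of $q_\star(\delta,\eta)$.

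First, since $\theta^B \sim \P(\cdot\mid\mathcal{Y}_B)$ where I write $\mathcal{Y}_B := (Y^\delta_B, Y^\eta_B)$, the Nishimori identity says $(\theta_B, \theta^B)$ are exchangeable conditionally on $\mathcal{Y}_B$, and independent samples drawn on distinct blocks are conditionally independent. Using this I would rewrite each target expectation in terms of posterior pair correlations $q_B(x,y) := \E[\theta_x\theta_y\mid\mathcal{Y}_B]$. For statement 1, one obtains $\E[M_B^2] = |B|^{-2}\sum_{x,y\in B}\E[q_B(x,y)^2]$. For statement 2, independence of $\theta^B$ and $\theta^{B'}$ conditionally on $(\mathcal{Y}_B, \mathcal{Y}_{B'})$ gives $\E[W_{B,B'}^2] = |B\cap B'|^{-2}\sum_{x,y\in B\cap B'}\E[q_B(x,y)\,q_{B'}(x,y)]$. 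Statement 3 similarly decomposes into a product of three posterior pair correlations (one from $B$, one from $B'$, and a ground-truth factor).

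The bulk of the work is showing that $q_B(x,y)^2$ concentrates around $q_{\star\star}^2(\delta)$ in an averaged sense as $\scL\to\infty$ then $\eta\to 0^+$. For the expectation, I would identify the thermodynamic limit of the block-wise averaged pair correlation with $q_\star(\delta,\eta)$: the per-pair SNRs aggregated across the various $(a,\bullet)$, $(a,\cap)$, $(a,\setminus)$ components in $Y^\eta_B$ have the correct $\eta/|B|$ scaling, so the block $B$ mimics an instance of the spiked-GOE side-information model on $\Lambda_n$. Using Assumption~\ref{assump:lipschitz} (Lipschitz continuity of the free energy in the SNR) I would interpolate between the block's side-information profile and a uniform spiked-GOE perturbation of total SNR $\eta$, arguing that both produce the same limit (at every continuity point of $\eta$). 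Sending $\eta\to 0^+$ gives $q_{\star\star}^2(\delta)$. The self-averaging statement $\var(M_B^2)\to 0$ is obtained by combining Gaussian concentration of the log-partition function in the side-information noise (a standard Gaussian Poincar\'e bound), the Nishimori identity relating $M_B^2$ to a derivative of the free energy in a perturbation direction, and the randomization in the shared parameter $t$.

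Once $\E[q_B(x,y)^2]\to q_{\star\star}^2(\delta)$ with vanishing variance (uniformly for $x,y$ in the bulk), I can pass to products across blocks. For statement 2, Cauchy--Schwarz gives $\E[q_B(x,y)\,q_{B'}(x,y)] \le \sqrt{\E[q_B(x,y)^2]\E[q_{B'}(x,y)^2]}$; both sides converge to $q_{\star\star}^2(\delta)$, while concentration forces equality in the limit (a small-variance random variable is close to its mean in $L^2$, so its correlation with any bounded random variable is close to the product of means). The same mechanism handles statement 3: the product $W_{B,B'} M_B M_{B'}$ factors under conditional expectation into three posterior pair correlations, and concentration of each factor around $q_{\star\star}(\delta)$ collapses the product to $q_{\star\star}^3(\delta)$. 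The core-versus-full statement $\E[(M_B - M_{B^\bullet})^2]\to 0$ is elementary: the boundary $|B\setminus B^\bullet|$ has cardinality $O(\scL^{d-1})$ while $|B|\asymp \scL^d$, so $|M_B - M_{B^\bullet}| = O(1/\scL)$ pointwise.

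The main obstacle will be identifying the block-local thermodynamic limit with $q_\star(\delta,\eta)$ and proving the corresponding self-averaging. The block's side information is spatially non-uniform (bulk SNR $t\eta/|B|$, boosted SNR on joints and complements from the neighboring contributions), and one must show that this non-uniformity does not affect the limit. Assumption~\ref{assump:lipschitz} is tailored to enable this: it lets me interpolate along a smooth family of Hamiltonians and pass to the limit of derivatives (via a Fubini-type argument and averaging over $t$), legitimating the matching between the block-wise overlap and the infinite-volume order parameter $q_{\star\star}(\delta)$.
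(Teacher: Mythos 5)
There are several genuine gaps, and the most concrete one is your treatment of $\E[(M_B - M_{B^{\bullet}})^2]$. You claim this is elementary because $|B\setminus B^{\bullet}| = O(\scL^{d-1})$ while $|B| \asymp \scL^d$. That is false for the blocks constructed in this paper: the joints are dilates of the cube $\frac13 C$, so each joint has volume of order $\scL^d$, and $\alpha = \lim_{\scL}|B\cap B'|/|B| = \frac{1}{3^d+d}$ is a strictly positive constant. Hence $M_B$ and $M_{B^{\bullet}}$ are averages over sets differing by a \emph{constant fraction} of the volume, and there is no pointwise $O(1/\scL)$ bound. Controlling this difference is precisely the purpose of the $t$-dependent splitting of the side information in Eq.~\eqref{eq:processed_side_info}: one shows the limiting free energies $\phi_1,\phi_2$ are independent of $t$, so that $\frac{\rmd}{\rmd t}f_B \to 0$, which by Gaussian integration by parts forces the restricted overlaps $R_{1,2}(B\cap B')$, $R_{1,2}(B\setminus B')$ to \emph{lock} to $R_{1,2}(B)$ (Proposition~\ref{prop:locking}). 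Your proposal never uses the parameter $t$ for this purpose, so this step is missing, not merely abbreviated.

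The mechanism you invoke for items 2 and 3 — concentration of individual pair correlations around $q_{\star\star}(\delta)$ — also does not work. By the sign symmetry of the model, $\E\big[\E[\theta_x\theta_y\mid Y_B]\big]=\E[\theta_x\theta_y]=0$ for $x\neq y$, so each $q_B(x,y)$ has mean zero and certainly does not concentrate near $q_{\star\star}(\delta)$; only the quadratic average $|B|^{-2}\sum_{x,y}q_B(x,y)^2$ converges. Consequently your Cauchy--Schwarz step for $\E[W_{B,B'}^2]$ gives only an upper bound, and "correlation close to product of means" gives $0$, not $q_{\star\star}^2$. The paper instead uses the martingale identity $\E[(\E[\theta_x\theta_y|Y_B]-\E[\theta_x\theta_y|Y_{B\cup B'}])^2]=\E[\E[\theta_x\theta_y|Y_{B\cup B'}]^2]-\E[\E[\theta_x\theta_y|Y_B]^2]$ together with locking and the closeness of the one-block and two-block maximizers (Lemma~\ref{lem:maximizers_are_close}). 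For item 3 the quantity is $\E\trace(\chi^3)$ for the susceptibility matrix $\chi$; knowing $\E\trace(\chi^2)\to q_{\star\star}^2$ does not give $\E\trace(\chi^3)\to q_{\star\star}^3$ unless $\chi$ is asymptotically rank one (two equal eigenvalues summing in square to $q_{\star\star}^2$ would give a strictly smaller cube trace). Establishing $\E\|\chi\|_{\op}\to q_{\star\star}$ via the symmetry-broken posterior mean $\E[\theta_x\mid Y^{\delta,\eta,\lambda}_{B\cup B'}]$ and the decoupling lemmas is a substantial separate argument that your proposal does not contain.
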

%
\subsection{An overview of the main ideas of the proof} 
\label{sec:overview_proof}
The proof of Theorem~\ref{thm:everything} is where the side information~\eqref{eq:processed_side_info} is crucially used.  We will give here a view of the main ideas. The full proof is presented in Section~\ref{sec:proof_everything}. The statements of the theorem follow from the study of the analytic properties (with respect to the parameters $t$ and $\eta$) of the \emph{free energies} associated to a block $B$ and to the union of two adjacent blocks $B \cup B'$, in the limit where the volume of $B$ and $B'$ tends to infinity. These free energies are the (properly normalized) expected logarithms of the partition functions of the posterior measures of $(\theta_u)_{u\in B}$ given $Y^{\delta,\eta}_{B}$, and $(\theta_u)_{u\in B \cup B'}$ given $Y^{\delta,\eta}_{B \cup B'}$ respectively, see e.g., Eq.~\eqref{eq:free_1}. We will show that these free energies converge to certain variational formulas involving the limiting free energy of the posterior measure of $\theta_{\Lambda_n}$ given the lattice measurements $Y^{\delta}_{\Lambda_n}$ together with a side information where one observes the spin $\theta_u$ of every vertex $u$ corrupted by an \emph{independent} Gaussian noise $z_u$. These results are stated and discussed in Section~\ref{sec:posteriors}, and proved in Section~\ref{sec:free_energy}. The convexity of the free energies with respect to $\eta$ allows to deduce convergence and concentration of the overlap $M_B$ in~\eqref{eq:overlap_block}. 

As for $M_{B^{\bullet}}$ and $W_{B,B'}$, these overlaps involve only a subset of the vertices of the block, so it is not immediately clear that they should also converge. This is where we show a \emph{overlap locking property} by varying the parameter $t$. For the sake of this discussion, suppose we have a sequence of graphs $G_n = (V_n,E_n)$ of growing size, and let $(A_n,B_n)$ be a partition of $V_n$ into two subsets such that $ |A_n|/|V_n| = \alpha_n \to \alpha$ and $|B_n|/|V_n| \to 1-\alpha$, $\alpha \in (0,1)$. Each vertex $u$ is endowed with random variable $\theta_u$ drawn independently at random form a distribution $p_0$, and we receive an observation $Y_{uv} \sim Q( \cdot | \theta_u, \theta_v)$ independently on every edge $(u,v) \in E$, $Q$ being a probability kernel. Additionally, in the spirit of the Guerra-Toninelli interpolation~\cite{guerra2002thermodynamic}, we receive side information of the form 
\begin{align*}
\tilde{Y}_{uv} &= \sqrt{\frac{t \lambda}{|V_n|}} \theta_u \theta_v + Z_{uv}~~~ \mbox{for all}~ u,v \in V_n,\\
\tilde{Y}_{uv}' &= \sqrt{\frac{(1-t) \lambda}{|A_n|}} \theta_u \theta_v + Z_{uv}'~~~ \mbox{for all}~ u,v \in A_n,\\
\tilde{Y}_{uv}'' &= \sqrt{\frac{(1-t) \lambda}{|B_n|}} \theta_u \theta_v + Z_{uv}''~~~ \mbox{for all}~ u,v \in B_n, 
\end{align*}
where $ Z_{uv},  Z_{uv}', Z_{uv}''$ are mutually independent standard normal random variables. We define the free energy $\phi_n(\lambda,t)$ as the expected logarithm of the normalizing constant of the posterior measure $\P(\cdot |  Y,\tilde{Y})$:
\begin{equation}\label{eq:free_1}
\phi_n(\lambda,t) = \frac{1}{|V_n|} \E \log \int \prod_{(u,v) \in E_n} Q(Y_{uv} | \theta_u, \theta_v) \cdot e^{H_n(\theta)} \prod_{u \in V_n} \rmd p_0(\theta_u),
\end{equation}
where $H_n = H_{V_n} + H_{A_n} + H_{B_n}$ and 
\begin{align*}
H_{V_n}(\theta) &= \sum_{u,v\in V_n} \sqrt{\frac{t \lambda}{|V_n|}} \tilde{Y}_{uv}\theta_u \theta_v - \frac{t \lambda}{2|V_n|} \theta_u^2 \theta_v^2, \\
H_{A_n}(\theta) &= \sum_{u,v\in A_n} \sqrt{\frac{(1-t) \lambda}{|A_n|}} \tilde{Y}_{uv}'\theta_u \theta_v - \frac{(1-t) \lambda}{2|A_n|} \theta_u^2 \theta_v^2, \\
H_{B_n}(\theta) &= \sum_{u,v\in B_n} \sqrt{\frac{(1-t) \lambda}{|B_n|}} \tilde{Y}_{uv}''\theta_u \theta_v - \frac{(1-t) \lambda}{2|B_n|} \theta_u^2 \theta_v^2 .
\end{align*} 
Let us assume that $\phi_n(\lambda,t)$ converges pointwise to a limit $\phi(\lambda,t)$ as $|V_n| \to \infty$. Since $\phi_n$ is convex with respect to $t$, so is $\phi$, and we have $\frac{\rmd}{\rmd t} \phi_n \to \frac{\rmd}{\rmd t} \phi$ for all $\lambda \ge 0$ and almost all $t \in [0,1]$. For a subset $S \subseteq V_n$, we let $R_{1,2}(S) = \frac{1}{|S|} \sum_{u \in S} \theta_u^1\theta_u^2$ where $\theta^1,\theta^2$ are drawn independently from $\P(\cdot | Y,\tilde{Y})$. A computation using Gaussian integration by parts yields 
\begin{align*}
\frac{\rmd}{\rmd t} \phi_n &= \frac{\lambda}{2 |V_n|} \E \Big \langle |V_n| R_{1,2}(V_n)^2 - |A_n| R_{1,2}(A_n)^2 -|B_n| R_{1,2}(B_n)^2\Big\rangle\\
&= -\frac{\lambda}{2} \alpha_n (1-\alpha_n) \E \Big \langle \Big( R_{1,2}(A_n) - R_{1,2}(B_n) \Big)^2\Big\rangle.
\end{align*}                   
The bracket angles $\langle \cdot \rangle$ denote the average with respect to the posterior measure $\P(\cdot |  Y,\tilde{Y})$. 
Now if we are able to show that the limit $\phi$ does not depend on $t$, then we obtain $R_{1,2}(V_n) \simeq R_{1,2}(A_n) \simeq R_{1,2}(B_n)$ ``asymptotically almost surely" for almost all values of $t$. We say in this case that $R_{1,2}(A_n)$ and $R_{1,2}(B_n)$ are \emph{locked} together.  
This is essentially the strategy for treating $M_{B^{\bullet}}$ and $W_{B,B'}$: we show that they are locked to $M_B$ and $M_{B \cup B'}$ respectively. The unproven Assumption~\ref{assump:lipschitz} is required in the process of proving the convergence of $\phi_n \to \phi$, (and in particular for showing that $\phi$ it does not depend on $t$; see Proposition~\ref{prop:limits_phi1_phi2} and Section~\ref{sec:proofs_limits}.   

We observe that a similar property was discovered by Panchenko in the context of spin glasses for the multi-species Sherrington-Kirkpatrick model~\cite{panchenko2015free}, but the underlying mechanism is quite different, although it also relies on a perturbation of the Hamiltonian. Panchenko names this property \emph{synchronization}, but this word has a different meaning in our context.     

Next, showing $\E[W_{B,B'} M_B M_{B'}] \longrightarrow q_{\star\star}^3(\delta)$ requires one more ingredient, of an operator-theoretic flavor. Let us first expand the expression at hand:
\begin{align*}    
\E[W_{B,B'} M_B M_{B'}] &= \frac{1}{|B \cap B'| |B| |B'|} \sum_{x \in B \cap B' , y \in B, z \in B'} \E\big[\theta_x^{B}\theta_x^{B'} \theta_y^{B}\theta_y \theta_z^{B'}\theta_z\big]\\
= \frac{1}{|B \cap B'| |B| |B'|}& \sum_{x \in B \cap B' , y \in B, z \in B'} \E\Big[\E\big[\theta_x \theta_y | Y_B\big] \E\big[\theta_x \theta_z| Y_{B'}\big] \E\big[\theta_y \theta_z | Y_{B \cup B'}\big]\Big].
\end{align*}
We use the locking property to show that the above expression can be approximated by 
\[\frac{1}{|B \cup B'|^3} \sum_{x,y,z \in B \cup B'} \E\Big[\E\big[\theta_x \theta_y | Y_{B \cup B'}\big] \E\big[\theta_x \theta_z| Y_{B \cup B'}\big] \E\big[\theta_y \theta_z | Y_{B \cup B'}\big]\Big].\]  
We observe that this is equal to $\E \trace (\chi^3)$, where $\chi$ is the $|B \cup B'| \times |B \cup B'|$  matrix with entries
\[\chi_{xy} = \frac{1}{|B \cup B'|} \E\big[\theta_x \theta_y| Y_{B \cup B'}\big].\]
On the other hand we know that, say, from Proposition~\ref{prop:conv_pair_corr}, that the expected trace of the square of $\chi$ converges to $q_{\star\star}^2(\delta)$:
\[\E \trace (\chi^2) = \frac{1}{|B \cup B'|^2} \sum_{x,y \in B \cup B'} \E\Big[\E\big[\theta_x \theta_y | Y_{B \cup B'}\big]^2\Big] \longrightarrow q_{\star\star}^2(\delta).\]
Therefore showing that the expected trace of the cube of $\chi$ converges to $ q_{\star\star}^3(\delta)$ amounts to showing that $\chi$ is approximately rank one. Concretely we will how that $\E\|\chi\|_{\textup{op}} \to q_{\star\star}(\delta)$, then use the sandwiching $\|\chi\|_{\textup{op}} \le \trace (\chi^3)^{1/3} \le  \trace (\chi^2)^{1/2}$ (valid since $\chi$ is a positive semidefinite matrix) together with a concentration argument. Now it suffices to exhibit a vector $v \in \R^{|B \cup B'|}$ maximizing the quadratic form $v^\top \chi \, v$. We let
\[v_x = \frac{1}{\sqrt{|B \cup B'|}} \E\big[\theta_x | Y_{B \cup B'} , \tilde{y}\big],~~~~ x \in B \cup B',\] 
where $\tilde{y} \in \R^{|B \cup B'|}$ is a random vector with i.i.d.\ coordinates $\tilde{y}_x = \sqrt{\varepsilon} \theta_x + z_x$, where $z_x \sim N(0,1)$. (Note that by symmetry, $v=0$ if $\varepsilon = 0$, therefore the necessity of introducing a small bias.)
We will show that as $\varepsilon \to 0$, we have $\E |v|^2 \to q_{\star\star}(\delta)$, and $\E\big[v^\top \chi \, v \big] \to q_{\star\star}^2(\delta)$, and then finish the proof by a concentration argument. These concentrations statements are collectively called \emph{decoupling bounds} and are presented in Section~\ref{sec:decoup}. 

\subsection{Proof of Theorem~\ref{thm:renormalization_flow} and Theorem~\ref{thm:reconstruction}}
\label{sec:proof_main}
We prove Theorem~\ref{thm:renormalization_flow} as a consequence of Theorem~\ref{thm:everything}:
\begin{proof}[Proof of Theorem~\ref{thm:renormalization_flow}]
For two adjacent block $B \sim B'$, let $\varepsilon_{B,B'} = \sign(W_{B,B'} M_B M_{B'})$. Then
\[\E[W_{B,B'} M_B M_{B'}] = \E[\varepsilon_{B,B'} |W_{B,B'}| |M_B| |M_{B'}|].\]
Since for any non-negative random variable we have $\var(X)^2 \le \frac{1}{2}\var(X^2)$, the variance statements in items 1 and 2 of Theorem~\ref{thm:everything} imply that $|W_{B,B'}|$, $|M_B|$ and $|M_{B'}|$ have vanishing variances. Combined with convergence of the expectations, this implies that $\E[|M_B|]$, $\E[|M_{B'}|]$ and $\E[|W_{B,B'}|]$ converge to $q_{\star\star}(\delta)$.  
Therefore
\[ \E[\varepsilon_{B,B'} |W_{B,B'}| |M_B| |M_{B'}|] = \E[\varepsilon_{B,B'}] q_{\star\star}^3(\delta) + o_{\scL,\eta}(1),\]
where $o_{\scL,\eta}(1)\to 0$ for a.e.\ $t\in[0,1]$ as $\scL \to \infty$ then $\eta\to 0$.
Given item 3, this implies that when $\delta>\delta_c$, $q_{\star\star}(\delta)>0$ and
\[ 1-2\scp= \E[\varepsilon_{B,B'}] \longrightarrow 1.\]
\end{proof}

Next, we use Theorem~\ref{thm:multiscale} to prove our main reconstruction result: Theorem~\ref{thm:reconstruction}. 

\begin{proof}[Proof of Theorem~\ref{thm:reconstruction}]
Recall that our final estimate of $\theta_u\theta_v$ is $T_{uv} = \widetilde{T}_{B_u,B_v}\theta^{B_u}_u\theta^{B_v}_v$, where $B_u$ and $B_v$ are blocks to which vertices $u$ and $v$ respectively belong (if a vertex happens to belong to a joint then its parent block is chosen arbitrarily) and $\theta^{B_u}_u$ is given in~\eqref{eq:sample_posterior}.  
We compute the performance of this estimator.  
Since the graph $\bbL$ is bipartite, $\Lambda_n$ can be partitioned into an alternating succession of blocks and cores of blocks: start with the block at the origin then take the cores of all the $3^d -1$ blocks surrounding it, and recurse. This forms a disjoint union. We color white ($\texttt{w}$)  the blocks $B \in \bbB_n$ appearing in full in this decomposition and  gray ($\texttt{g}$) those whose cores appear instead. Below, we use the notation $B\texttt{w}$ or $B\texttt{g}$ to indicate the color of the block $B$. We write         
\[ \risk_{\Lambda_n}(T) = \frac{1}{|\Lambda_n|^2} \sum_{u,v \in \Lambda_n} \E\Big[T_{uv}\theta_u\theta_v\Big]
= \texttt{ww} + 2\texttt{wg} + \texttt{gg}, \]   
where
\begin{align*}
 \texttt{ww}  =  \sum_{B \texttt{w} ,B'  \texttt{w} \in \bbB_n}& \sum_{u \in B, v \in B'} A_{uv}^{BB'},~~~
 \texttt{wg} = \sum_{B  \texttt{w},B'  \texttt{g} \in \bbB_n} \sum_{u \in B, v \in B'^{\bullet}} A_{uv}^{BB'},\\
 \texttt{gg} &= \sum_{B  \texttt{g},B'  \texttt{g} \in \bbB_n} \sum_{u \in B^{\bullet}, v \in B'^{\bullet}} A_{uv}^{BB'},
 \end{align*}
 and 
\[ A_{uv}^{BB'} =  \frac{1}{|\Lambda_n|^2} \E\Big[\widetilde{T}_{B, B'} \theta^{B}_u\theta^{B'}_v\theta_u\theta_v\Big].\]
We analyze each one the three expressions separately. 
For the last two expressions, we have
 \begin{align*}
  \sum_{u \in B, v \in B'^{\bullet}} A_{uv}^{BB'} &=  \frac{|B| \cdot |B'^{\bullet}|}{|\Lambda_n|^2}  \E\Big[\widetilde{T}_{B, B'} M_{B} M_{B'^{\bullet}}\Big]
 = \frac{|B|\cdot |B'^{\bullet}|}{|\Lambda_n|^2} \Big(\E\Big[\big(\widetilde{T}_{B, B'} M_{B} M_{B'}\Big] + \mbox{error}\Big),\\
  \sum_{u \in B^{\bullet}, v \in B'^{\bullet}} A_{uv}^{BB'} &=  \frac{|B^{\bullet}| \cdot |B'^{\bullet}|}{|\Lambda_n|^2}  \E\Big[\widetilde{T}_{B, B'} M_{B^{\bullet}} M_{B'^{\bullet}}\Big]
 = \frac{|B^{\bullet}| \cdot |B'^{\bullet}|}{|\Lambda_n|^2} \Big(\E\Big[\widetilde{T}_{B, B'}  M_{B} M_{B'}\Big] + \mbox{error}\Big),
  \end{align*}
where $|\mbox{error}| \le 2\E\big[|M_{B'} - M_{B'^{\bullet}}|\big]$.
Moreover,
 \begin{align*}
  \sum_{u \in B, v \in B'} A_{uv}^{BB'} &=  \frac{|B| \cdot |B'|}{|\Lambda_n|^2}  \E\Big[\widetilde{T}_{B, B'} M_{B} M_{B'}\Big],\\
 &= \frac{|B|^2}{|\Lambda_n|^2} \E\Big[\big(\widetilde{T}_{B, B'} \tilde{\theta}_{B} \tilde{\theta}_{B'}\big) |M_{B}| |M_{B'}|\Big].
  \end{align*}
Now we apply item 1 of Theorem~\ref{thm:everything}: $\var(|M_B|) = o_{\scL,\eta}(1)$, $\E[|M_B|] = q_{\star\star} + o_{\scL,\eta}(1)$ and $\mbox{error} = o_{\scL,\eta}(1)$, and Theorem~\ref{thm:multiscale}:  $\E\big[\widetilde{T}_{B, B'} \tilde{\theta}_{B} \tilde{\theta}_{B'}\big] \ge \frac{19}{21}$. Here, $o_{\scL ,\eta}(1) \to 0$ as $\scL \to \infty$ then $\eta \to 0$.
 Whence 
\begin{align*}
 \texttt{ww}  &\ge  \frac{1}{|\Lambda_n|^2}  \sum_{B \texttt{w} ,B'  \texttt{w} \in \bbB_n} |B| \cdot |B'| \Big(\frac{19}{21}q_{\star\star}^2(\delta)+ o_{\scL,\eta}(1)\Big),\\
 \texttt{wg} &\ge  \frac{1}{|\Lambda_n|^2}  \sum_{B \texttt{w} ,B'  \texttt{g} \in \bbB_n} |B| \cdot |B'^{\bullet}|  \Big(\frac{19}{21}q_{\star\star}^2(\delta)+ o_{\scL,\eta}(1)\Big),\\
 \texttt{gg} &\ge  \frac{1}{|\Lambda_n|^2}  \sum_{B \texttt{g} ,B'  \texttt{g} \in \bbB_n} |B^{\bullet}| \cdot |B'^{\bullet}|  \Big(\frac{19}{21}q_{\star\star}^2(\delta)+ o_{\scL,\eta}(1)\Big).
\end{align*}
Consequently,
$\risk_{\Lambda_n}(T)  \ge \frac{19}{21}q_{\star\star}^2(\delta) + o_{\scL,\eta}(1)$,
Now we let $\eta$ be sufficiently small and $\scL$ be sufficiently large (possibly depending on $\eta$) such that the above is, say, larger than $\frac{9}{10}q_{\star\star}^2(\delta)$. 
Finally, the full running time of this algorithm is $\bigo(n^{2d})$ since the construction of the estimators $\widetilde{T}_{B,B'}$ and the sampling operation (step 3) both take constant time in $n$.    
\end{proof}

\section{The multiscale scheme}
\label{sec:multiscale}
We now present the multiscale algorithm and prove Theorem~\ref{thm:multiscale}. 

\subsection{The algorithm}

\paragraph{1.\ Hierarchical partioning.} We construct a hierarchy of partitions $(\mathcal{B}^{(k)})_{k \ge 0}$ of the rescaled lattice $\bbL$ which we define recursively. 
Let $\mathcal{B}^{(0)} = \bbB$ and $\bbL^{(0)} = \bbL$. Let $\mathbf{0}^{(0)} \in \bbB$ is the block centered at the origin.   
Given $\bbL^{(k)}$ which we assume is isomorphic to $(\Z^d,\E^d)$ (this is already the case at level $k=0$), we construct $\bbL^{(k+1)}$ satisfying the same property. Let $d_k$ be the $\ell_{\infty}$ distance in $\bbL^{(k)}$, which is well defined by isomorphism to $(\Z^d,\E^d)$. 
Now we fix a integer constant $\kappa \ge 1$ and define the origin of  $\mathcal{B}^{(k+1)}$ as
\[\mathbf{0}^{(k+1)} =  \Big\{B \in \mathcal{B}^{(k)}: d_k\big(B^{(k)},\mathbf{0}^{(k)}\big) \le \kappa (k+1)^2\Big\},\]  
and let  
\[\mathcal{B}^{(k+1)} =  \Big\{ \mathbf{0}^{(k+1)} + m \ell_k e_i ~:~  m\in \Z, ~i=1,\cdots,d \Big\},\]
where $e_i$ are the coordinate directions of unit length in $\bbL^{(k)}$. Two elements $B_1$ and $B_2$ of $\mathcal{B}^{(k+1)}$ are adjacent, and we write $B_1 \sim B_2$, if one is a translate of the other by $\ell_k e_i$ for some $i$. This completely defines $\bbL^{(k+1)}$. 
We refer to the elements of $\mathcal{B}^{(k)}$ as the level-$k$ blocks. The original lattice $\Z^d$ can be thought of as being at level $k=-1$. We also define $\ell_k = 2\kappa (k+1)^2 +1$ be the side length of a block in $\mathcal{B}^{(k+1)}$. We require the following three conditions on the sequence of scales:
\begin{equation}\label{eq:scales_condition}
\begin{split}
\mathbf{A1}:~~~\sup_{k\ge 0} \Big\{k^{2d} \ell_{k}^{2d} &(2 \kappa)^{-(d-1)(k+5)}\Big\} \le \frac{1}{2}, \qquad
\mathbf{A2}:~~~ \sum_{k=0}^{\infty}\frac{1+3^{d-1}}{\ell_{k}^{d-1}} \le \frac{1}{20}, \\
\mathbf{A3}:&~~~(1+3^{d})\sum_{k=0}^{\infty} k^{2d} (2\kappa)^{-(d-1)(k+6)} \le \frac{1}{42}.
\end{split}
\end{equation}
All three conditions can be satisfied simultaneously if $\kappa = \kappa_0(d)$ is a sufficiently large constant depending on $d$.
Indeed, since $\ell_{k} \sim (2\kappa)(k+1)^2$ for $k$ or $\kappa$ large, the supremum in $\mathbf{A1}$ is finite and can be made arbitrarily small if $\kappa$ is large enough. Next, the two series in $\mathbf{A2}$ and $\mathbf{A3}$ are convergent for all $\kappa >\frac{1}{2}$ and all $d \ge 2$, and their values are decreasing to zero as a function of $\kappa$. We fix $\kappa$ such that $\mathbf{A1, A2, A3}$ are satisfied.

\paragraph{2.\ Block variables 1.} We assign recursively to every level-$k$ block $B\in \mathcal{B}^{(k)}$ a random variable $W^{(k)}_{B} \in \{-1,+1\}$ which is measurable with respect to the new synchronization variables $\{\widetilde{Y}_{B_1,B_2} : B_1\sim B_2 \in \bbB\}$ defined in~\eqref{eq:new_observations}. Our estimates $\widetilde{T}_{B_1,B_2}$ for two level-0 blocks $B_1,B_2$ are defined as
\begin{equation}\label{eq:block_sync}
\widetilde{T}_{B_1,B_2} = \prod_{k \ge 0} W^{(k)}_{B_{1,k}}W^{(k)}_{B_{2,k}},
\end{equation}
where $B_{1,k}$ is the unique level-$k$ block containing $B_2$, and similarly for $B_{2,k}$. Note that the above product is finite and extends up to the most recent ancestor of $B_1$ and $B_2$, that is the lowest level block which contains both $B_1$ and $B_2$. It will be convenient to define the product of the variables $W^{(k)}_{B}$ up to a given high in the hierarchy: For a level-$0$ block $B$ we let 
\[\widetilde{W}^{(k)}_{B} = \prod_{j = 0}^{k} W^{(j)}_{B_{j}},\]
were $B_{j}$ is the unique level-$j$ block containing $B$. (In this case $\widetilde{T}_{B_1,B_2} = \widetilde{W}^{(k)}_{B_1}\widetilde{W}^{(k)}_{B_2}$ where $k$ is the level of the lowest common ancestor of $B_1$ and $B_2$.)

\paragraph{3.\ Synchronization variables.} We recursively compute synchronization variables $Y^{(k)}_{B,B'}$ for every pair of adjacent level-$k$ blocks. For $k=0$ we let $Y^{(0)}_{B,B'}=\widetilde{Y}_{B,B'}$. For general $k$ we proceed as follows. For two adjacent level-$k$ blocks $B_1,B_2$, we define $\partial^{(0)}(B_1,B_2)$ to be the set of pairs of level-$0$ adjacent blocks $B\sim B'$ such that $B \in B_1$ and $B' \in B_2$. 
The set $ \partial^{(0)}(B_1)$ of level-$0$ blocks $B \in B_1$ which have an adjacent block $B' \in B_2$, seen as graph, is isomorphic to a box in $\Z^{d-1}$, and is thus bipartite, therefore 2-colorable. 
Let $\mathsf{col}: \partial^{(0)}(B_1) \mapsto \{0,1\}$ be such a coloring (which can be found in linear time in a recursive way). We now define the subset of $\partial^{(0)}(B_1,B_2)$ consisting of those edges whose end in $B_1$ are colored $1$:
\begin{equation}\label{eq:boundary_skip}
\Xi(B_1,B_2) = \big\{ (B,B') \in \partial^{(0)}(B_1,B_2) ~:~\mathsf{col}(B)=1\big\}.
\end{equation}
Importantly, two edges in $\Xi$ have their endpoints non-adjacent. This property will be important in the analysis due to the fact that two non-adjacent level-$0$ blocks do \emph{not} overlap. Finally, we define 
\begin{equation}\label{eq:level_k_sync_var}
Y^{(k)}_{B_1,B_2} = \sign\Big(\sum_{(B,B')\in \Xi(B_1,B_2)} \widetilde{Y}_{B,B'} \widetilde{W}^{(k-1)}_{B}\widetilde{W}^{(k-1)}_{B'}\Big).
\end{equation}

\paragraph{4.\ Block variables 2.} It remains to describe how $W^{(k)}_B$ is constructed from the knowledge of the level-$k$ synchronization variables $\big\{Y^{(k)}_{B_1,B_2} : B_1 \sim B_2 \in \mathcal{B}^{(k)}\big\}$. Let $B^*$ a level-$(k+1)$ block. We call a \emph{quartet} a collection of 4 level-$k$ blocks $B_1\sim B_2 \sim B_3 \sim B_4 \sim B_5=B_1$ which form a square inside $B^*$. We say that a quartet is \emph{incoherent} if $\prod_{i=1}^4 Y^{(k)}_{B_{i},B_{i+1}} = -1$, and coherent otherwise. Let $\mathcal{I}_{B^*}$ be the largest connected component of level-$k$ sub-blocks $B \in B^*$ which belong to no incoherent quartet. Now we assign, whenever possible, a random variable $W^{(k)}_B$ to every $B \in B^*$ such that 
\begin{equation}\label{eq:new_block_variables}
Y^{(k)}_{B_{1},B_{2}} = W^{(k)}_{B_1} W^{(k)}_{B_2} ~~\mbox{for all}~~ B_1,B_2 \in \mathcal{I}_{B^*} ~\mbox{with}~ B_1\sim B_2.
\end{equation}
If this is not possible then all blocks are assigned the value $+1$. Next, the blocks which do not belong to $\mathcal{I}_{B^*}$ are all assigned the value $+1$.   
This completes the description of the multiscale algorithm.

\subsection{Proof of Theorem~\ref{thm:multiscale}}
We claim that for an appropriate choice of the renormalization length $\scL$, the construction satisfies the conclusion of Theorem~\ref{thm:multiscale}. 
We follow the argument in~\cite{abbe2017group} which used a similar construction directly on the original lattice $\Z^d$ for $d=2$. There are two novel aspects in our setting: the construction is generalized to all dimensions, and the initial renormalization step, in particular the fact that $\scp\to 0$ as $\scL \to \infty$ is used to `jump-start' the argument.     

 Let $\scdelta = 1-2\scp$. Let us call an edge $B_1 \sim B_2$ for two level-$k$ blocks $B_1,B_2 \in \mathcal{B}^{(k)}$ \emph{honest} if
\begin{equation}\label{eq:honest}
\sum_{(B,B')\in \Xi(B_1,B_2)} \widetilde{Y}_{B,B'} \tilde{\theta}_{B} \tilde{\theta}_{B'} \ge \frac{9}{10}\scdelta \big|\Xi(B_1,B_2)\big|.
\end{equation}
Recall $\Xi(B_1,B_2)$ was defined in Eq.~\eqref{eq:boundary_skip}. 
Now we recursively define the notion of a \emph{good} block. Level-$0$ blocks are good. A level-$k \ge 1$ block $B$ is good if two conditions are satisfied:
\begin{itemize}   
\item All adjacent level-$(k-1)$ sub-blocks of $B$ form honest edges.
\item $B$ contains at most one \emph{bad} (i.e., not good) level-$(k-1)$ sub-block.
\end{itemize}
\begin{lemma}\label{lem:bad_block}
There exists $p_0 =p_0(d,\kappa) \in (0,\frac{1}{2})$ such that if $\scp \le p_0$ then for all $k\ge 0$ and all $B \in \mathcal{B}^{(k)}$, 
\[\P(B ~\mbox{is bad}) \le  k^{2d}(2\kappa)^{- (d-1)(k+6)}.\]  
\end{lemma}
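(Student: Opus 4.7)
} The proof proceeds by induction on $k$. The base case $k=0$ is trivial since level-$0$ blocks are good by definition, so $\P(B\text{ bad})=0$ and the right-hand side vanishes. For the inductive step, the two failure modes in the definition of goodness give, by the union bound,
\begin{equation*}
\P(B\text{ bad}) \;\le\; \P(\text{a dishonest edge exists inside } B) \;+\; \P(\ge 2 \text{ bad sub-blocks in }B),
\end{equation*}
and the aim is to bound each term by half of the target $k^{2d}(2\kappa)^{-(d-1)(k+6)}$.

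For the dishonest-edge term, I fix adjacent level-$(k-1)$ sub-blocks $B_1,B_2\subset B$. The 2-coloring used to define $\Xi(B_1,B_2)$ in~\eqref{eq:boundary_skip} ensures that the level-$0$ endpoints of any two distinct edges of $\Xi(B_1,B_2)$ are pairwise non-adjacent in $\bbL$, hence disjoint as vertex sets in $\Z^d$. Combined with the conditional independence across blocks of the posterior samples in~\eqref{eq:sample_posterior}, this makes the $\{\pm1\}$-valued quantities $\widetilde{Y}_{B,B'}\tilde{\theta}_{B}\tilde{\theta}_{B'}$ for $(B,B')\in \Xi(B_1,B_2)$ genuinely i.i.d.\ with common mean $\scdelta=1-2\scp$. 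Hoeffding's inequality then yields $\P(\text{edge }(B_1,B_2)\text{ dishonest})\le \exp(-\scdelta^2|\Xi(B_1,B_2)|/200)$. Since $|\Xi(B_1,B_2)|\ge \tfrac12(2\kappa)^{(d-1)(k-1)}$ for $k\ge 2$, this beats the target super-exponentially once summed over the at most $d\,\ell_{k-1}^d$ adjacent sub-block pairs inside $B$. For $k=1$, $|\Xi|=1$ and the single-edge failure probability is $\scp$, so one chooses $p_0=p_0(d,\kappa)$ small enough that $d\,\ell_0^d\,\scp\le \tfrac12(2\kappa)^{-7(d-1)}$.

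For the second term I use the pairwise union bound $\P(\ge 2\text{ bad})\le \sum_{B'\neq B''}\P(B'\text{ and }B''\text{ both bad})$. Badness of a level-$(k-1)$ sub-block $B'$ is measurable with respect to observations, samples, and true spins localized inside the level-$0$ blocks of $B'$; two non-adjacent sub-blocks therefore have independent badness events, and the inductive hypothesis gives $\P(B'\text{ and }B''\text{ both bad})\le q_{k-1}^2$. Hence
\begin{equation*}
\P(\ge 2\text{ bad sub-blocks})\;\le\; \tfrac12\ell_{k-1}^{2d}\,q_{k-1}^2 \;\le\; \tfrac12\ell_{k-1}^{2d}(k-1)^{4d}(2\kappa)^{-2(d-1)(k+5)},
\end{equation*}
and condition $\mathbf{A1}$ applied at index $k-1$ is precisely calibrated so that this upper bound is at most $\tfrac12\,k^{2d}(2\kappa)^{-(d-1)(k+6)}$, closing the induction.

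The main obstacle is the $\bigo(\ell_{k-1}^d)$ adjacent pairs of sub-blocks in the second step, whose badness events are coupled through the shared true spins $\theta$ on the boundary joints straddling the interface between $B'$ and $B''$. A naive union bound gives only $\P(\text{both bad})\le q_{k-1}$ for such pairs, which is far too weak. I would handle this either by conditioning on the shared boundary spins (a codimension-$1$ slab whose volume is of lower order than the bulk of $B'$, so the conditional badness probabilities remain essentially equal to $q_{k-1}$), or by modifying the definition of `good' to disregard a one-level-$0$-block-thick boundary collar inside each sub-block, enforcing strict independence across all distinct sub-block pairs and absorbing the extra discarded honesty checks into the dishonest-edge bound of the previous step.
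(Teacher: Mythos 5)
Your plan is, in its structure, the same as the paper's proof: induction on the level, a union bound over the two failure modes, mutual independence of the signs $\widetilde{Y}_{B,B'}\tilde{\theta}_{B}\tilde{\theta}_{B'}$ over $\Xi(B_1,B_2)$ coming from the 2-coloring (pairwise non-adjacent, hence non-overlapping, level-$0$ endpoints), an exponential tail for dishonesty, a squared probability for the event of two bad sub-blocks, and a recursion of the form $\epsilon_{k+1}\le A(k)\epsilon_k^2+B$ whose quadratic coefficient is tamed exactly by condition $\mathbf{A1}$. One quantitative difference: the paper uses Bennett's inequality rather than Hoeffding's, and this is not cosmetic. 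Bennett gives a rate $c_0(\scp)$ that diverges as $\scp\to 0$, and after the elementary bound $e^{-c_0|\Xi|}\le (c_0|\Xi|)^{-1}$ the additive constant $B$ in the recursion is proportional to $1/c_0(\scp)$, so it is driven below $\tfrac12$ \emph{uniformly in $k$} purely by shrinking $p_0$. With Hoeffding the exponent saturates at $\scdelta^2/200\le 1/200$, so your ``super-exponential'' claim for $k\ge 2$ only holds once $\kappa$ is taken large enough that $e^{-c(2\kappa)^{d-1}}$ beats $(2\kappa)^{-(d-1)(k+6)}\ell_{k-1}^{-d}$ at the first few levels; this is repairable, since $\kappa$ is anyway a free large constant, but it forces you to revisit how $\kappa$ is fixed, whereas the paper's route isolates all the smallness in $p_0$.

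On the obstacle you flag: your diagnosis is correct, and it is worth saying plainly that the paper does not resolve it --- its proof disposes of the two-bad-sub-blocks term with the single sentence that badness events of distinct sub-blocks are independent, which is then squared against the induction hypothesis. For \emph{non-adjacent} sub-blocks this is genuinely true (their constituent level-$0$ blocks are pairwise non-adjacent in $\bbL$, hence disjoint in $\Z^d$, and all the randomness entering $Z_{B,B'}$ is localized in $B\cup B'$). For \emph{adjacent} sub-blocks it is not: $\tilde{\theta}_B=\sign(\sum_{x\in B}\theta^B_x\theta_x)$ and the posterior sample $\theta^B$ depend on $\theta$ and $Y^{\delta}$ over all of $B$, including the joints $B$ shares with level-$0$ blocks of the neighbouring sub-block, so the two badness events are coupled through the interface exactly as you describe, and the naive bound for those $\bigo(\ell_{k-1}^d)$ pairs does not square. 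So this is a gap in the source rather than a defect specific to your plan; either of your proposed repairs (conditioning on the interface disorder and controlling the conditional badness probabilities, or inserting a collar so that badness becomes measurable with respect to strictly interior randomness) would close it, at the cost of reworking the induction hypothesis, and your proposal as written leaves that step open in the same way the paper does.
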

\begin{proof}
We proceed by induction. 
The base case $k=0$ being true, let us assume that for some $k\ge 0$ each level-$k$ block is good with probability at least $1-\epsilon_{k}$. Fix a level-$(k+1)$ block $B^*$, and two of its level-$k$ sub-blocks $B_1, B_2$ which are adjacent. We evaluate the probability that this edge is honest. We let $Z_{B,B'} = \widetilde{Y}_{B,B'} \tilde{\theta}_{B} \tilde{\theta}_{B'}$ for two level-$0$ blocks $B$ and $B'$. By construction, $\Xi(B_1,B_2)$ contains edges which are far away by a distance of at least two. If $(B,B')$ and $(B'',B''')$ are two such edges then $B$ and $B''$ do not overlap, and ditto for $B'$ and $B'''$. This crucially implies that the random variables $(Z_{B,B'})_{(B,B') \in \Xi(B_1,B_2)}$ are \emph{mutually independent}. 
Thus the sum on the left-hand side of~\eqref{eq:honest} is a sum of i.i.d.\ signs with common mean $\scdelta$. An application of Bennett's inequality reveals that the probability that this edge is not honest is bounded by 
\[\exp\big(-  c_0(\scp) |\Xi(B_1,B_2)| \big),\] 
where $c_0(\scp) =  4\scp (1-\scp) h\big(\frac{\scdelta}{40 \scp (1-\scp)}\big)$ and $h(u) = (1+u)\log(1+u) - u$, $u \ge 0$.
Further, $B^*$ has $\ell_{k}^d$ level-$k$ sub-blocks and $d\ell_{k}^{d}$ edges, therefore a union bound implies
\[\P(B^* ~\mbox{is bad}) \le d\ell_{k}^{d} e^{-c_0(\scp)|\Xi(B_1,B_2)|} + \P(B^* \mbox{ has at least two bad sub-blocks}).\]
Since the events that two different (sub-)blocks are bad are independent, the induction hypothesis implies that the probability on the right-hand side in the above display is bounded by ${\ell_{k}^d\choose 2} \epsilon_{k}^2$. 
Moreover, have $|\Xi(B_1,B_2)| \ge \lfloor |\partial^{(0)}(B_1,B_2)|/2 \rfloor$ and 
\[|\partial^{(0)}(B_1,B_2)| = \prod_{j=0}^{k-1} \ell_{j}^{d-1} \ge  \big(3(2 \kappa)^{k} k!^2\big)^{d-1} ~~~\mbox{if}~~ k \ge 1,\] 
and $|\partial^{(0)}(B_1,B_2)| = 1$ if $k=0$.
We use the crude bound $|\Xi(B_1,B_2)| \ge (2 \kappa)^{(d-1)k}$ and obtain 
\begin{align*}
\P(B^* ~\mbox{is bad}) &\le d\ell_{k}^{d}\, \exp\big(-c_0(\scp)(2 \kappa)^{(d-1)k}\big) + \ell_{k}^{2d}\epsilon_{k}^2\\
&\le d\ell_{k}^{d} \, c_0(\scp)^{-1} (2 \kappa)^{-(d-1)k} + \ell_{k}^{2d}\epsilon_{k}^2 ~=: \epsilon_{k+1}.
\end{align*}
The problem now reduces to studying the above iteration, starting from $\epsilon_0 = 0$. 
We consider the auxiliary sequence 
\[u_k = k^{-2d} (2\kappa)^{(d-1)(k+6)}\epsilon_k,\]
and show that $\sup_{k} u_k \le 1$ when $\scp$ is small enough. This sequence satisfies
\begin{align*}
u_{k+1} &= \frac{k^{4d} \ell_{k}^{2d}}{(k+1)^{2d}}  (2 \kappa)^{-(d-1)(k+5)} u_{k}^2 + d\frac{\ell_{k}^{d}}{(k+1)^{2d}} \, c_0(\scp)^{-1} (2 \kappa)^{7(d-1)}. \\
&\le k^{2d} \ell_{k}^{2d} (2 \kappa)^{-(d-1)(k+5)} u_{k}^2 + \frac{2d (2\kappa+1)^d(2 \kappa)^{7(d-1)}}{c_0(\scp)}\\
&=: A(k) u_{k}^2 + B. 
\end{align*}
By condition $\mathbf{A1}$ in Eq.~\eqref{eq:scales_condition} we have $A(k) \le \frac{1}{2}$ for all $k \ge 0$. Moreover, it is easy to see that $c_0(\scp) \to +\infty$ as $\scp \to 0$, so there exists $p_0 = p_0(d,\kappa)<1$ such that if $\scp \le p_0$ then $B \le \frac{1}{2}$, and we are lead to consider the iteration
\[u_{k+1} \le \frac{1}{2}u_k^2 + \frac{1}{2},~~ \mbox{with}~~u_0=0.\]  
It is clear that if $u_{k} \le 1$ then $u_{k+1} \le 1$, and therefore $\sup_{k\ge 0} u_k \le 1$, whence $\epsilon_{k} \le k^{2d} (2\kappa)^{-(d-1)(k+6)}$ for all $k \ge 0$. 
\end{proof}

For $B^* \in \mathcal{B}^{(k+1)}$, we denote by $\mathcal{H}^{(k+1)}_{B^*}$ the event that the equation~\eqref{eq:new_block_variables} is satisfied for some choice of variables $(W_{B}, B \in \mathcal{I}_{B^*})$, and call a block $B$ \emph{agreeable} if it belongs to no incoherent quartet, i.e., $B \in \mathcal{I}_{B^*}$ (where $B^*$ is the upper-level block containing it.)   
\begin{lemma}\label{lem:induction_agreeable}
For all $k\ge0$, if the $k$-level block $B_1$ is good then the following holds:
\begin{enumerate}
\item If $k\ge 1$ then all quartets of level-$(k-1)$ good sub-blocks of $B_1$ are coherent.  
\item The event $\mathcal{H}_{B_1}^{(k)}$ holds.
\item There exists random a variable $S_{B_1}^{(k)} \in \{\pm 1\}$ such that if $B$ is a level-$0$ block in $B_1$ whose ancestors up to $B_1$ are good and agreeable, then
\[\tilde{\theta}_{B} = S_{B_1}^{(k)} \widetilde{W}^{(k-1)}_{B}.\]  
(By convention, $\widetilde{W}^{(-1)}_{B}=1$.) 
\item For any good level-$k$ block $B_2 \sim B_1$, we have
\[\sum_{\underset{\mathsf{col}(B)=1}{B \in B_1 \cap \partial B_2}} \tilde{\theta}_{B} S_{B_1}^{(k)} \widetilde{W}^{(k-1)}_{B} \ge (1-\epsilon_k) |\Xi(B_1,B_2)|,\] 
where $\epsilon_0 = 0$ and $\epsilon_{k+1}=\epsilon_{k} +  \frac{2(3^{d-1}+1)}{\ell_k^{d-1}}$ for all $k\ge 0$. In particular, due to condition $\mathbf{A2}$ in Eq.~\eqref{eq:scales_condition}, we have
\[\epsilon_k = \sum_{j\le k} \frac{2(3^{d-1}+1)}{\ell_{j}^{d-1}} \le \frac{1}{10} ~~~\mbox{for all}~k \ge 0.\]
\end{enumerate} 
\end{lemma}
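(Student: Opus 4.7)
The plan is to prove the lemma by induction on $k$, mirroring the strategy of~\cite{abbe2017group} adapted to the renormalized setting. The base case $k=0$ is trivial upon setting $S_{B_1}^{(0)} := \tilde{\theta}_{B_1}$: Parts 1 and 2 are vacuous, Part 3 is a tautology (since $\widetilde{W}_B^{(-1)} = 1$), and Part 4 reduces to $\tilde{\theta}_{B_1}^2 = 1 \ge 1$, noting $|\Xi(B_1,B_2)|=1$ for adjacent level-$0$ blocks. For the inductive step, assume the lemma at level $k$ and fix a good level-$(k+1)$ block $B^*$. To establish Part 1, I would take a quartet $B_1 \sim B_2 \sim B_3 \sim B_4$ of good level-$k$ sub-blocks (each edge honest by goodness of $B^*$), and in the defining sum of $Y^{(k)}_{B_i, B_{i+1}}$ in~\eqref{eq:level_k_sync_var} substitute $\widetilde{W}^{(k-1)}_B = S^{(k)}_{B_i} \tilde{\theta}_B$ and $\widetilde{W}^{(k-1)}_{B'} = S^{(k)}_{B_{i+1}} \tilde{\theta}_{B'}$ wherever the inductive Part 3 relation applies. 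The inductive Part 4, applied to both endpoints (the bound extending to the complementary color class by the symmetry of the checkerboard coloring), bounds the number of exceptional pairs by $O(\epsilon_k)\, |\Xi(B_i,B_{i+1})|$. The main term equals $S^{(k)}_{B_i} S^{(k)}_{B_{i+1}} \sum \widetilde{Y}_{B,B'}\tilde{\theta}_B \tilde{\theta}_{B'}$, whose magnitude exceeds $\tfrac{9}{10}\scdelta\, |\Xi(B_i,B_{i+1})|$ by honesty. Since $\epsilon_k\le 1/10$ by $\mathbf{A2}$ and $\scdelta$ is bounded below (as $\scp\le p_0$), the error cannot flip the sign, so $Y^{(k)}_{B_i, B_{i+1}} = S^{(k)}_{B_i} S^{(k)}_{B_{i+1}}$, and the product around the quartet telescopes to $+1$.

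Parts 2 and 3 then follow cleanly. Every $4$-cycle contained in $\mathcal{I}_{B^*}$ is a coherent quartet (by Part 1 together with the defining property of $\mathcal{I}_{B^*}$), so propagation from an arbitrary root in $\mathcal{I}_{B^*}$ produces a consistent $W^{(k)}$-assignment satisfying~\eqref{eq:new_block_variables}, establishing Part 2. For any adjacent $B_k\sim B_k'$ in $\mathcal{I}_{B^*}$, the identities $Y^{(k)}_{B_k, B_k'} = W^{(k)}_{B_k} W^{(k)}_{B_k'}$ (from Part 2) and $Y^{(k)}_{B_k, B_k'} = S^{(k)}_{B_k} S^{(k)}_{B_k'}$ (from the Part 1 calculation) imply that $S^{(k)}_{B_k} W^{(k)}_{B_k}$ is invariant along edges of $\mathcal{I}_{B^*}$, hence globally constant by connectivity; I would define $S^{(k+1)}_{B^*}$ as this common value. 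Combining the factorization $\widetilde{W}^{(k)}_B = W^{(k)}_{B_k} \widetilde{W}^{(k-1)}_B$ with the inductive Part 3 applied to the level-$k$ ancestor $B_k$ of $B$ then yields $\tilde{\theta}_B = S^{(k+1)}_{B^*}\widetilde{W}^{(k)}_B$ for every level-$0$ block $B$ with good agreeable ancestors up to $B^*$, establishing Part 3.

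For Part 4, fix adjacent good $B^*_1 \sim B^*_2$ and classify the level-$0$ blocks $B\in B^*_1\cap \partial B^*_2$ with $\mathsf{col}(B)=1$ for which the identity $\tilde{\theta}_B = S^{(k+1)}_{B^*_1}\widetilde{W}^{(k)}_B$ may fail, by the status of the level-$k$ ancestor $B_k$ of $B$: (i) $B_k$, or its cross-face neighbor $B_k' \in B^*_2$, is the unique bad sub-block in its parent; (ii) $B_k$ is good but not agreeable, which by Part 1 forces $B_k$ to share an incoherent quartet with the bad sub-block of $B^*_1$, restricting $B_k$ to at most $3^{d-1}$ blocks adjacent to the bad one; (iii) both $B_k$ and $B_k'$ are good, and the inductive Part 4 on the pair $(B_k,B_k')$ bounds the exceptional $B$'s by $\tfrac{\epsilon_k}{2}|\Xi(B_k,B_k')|$. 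Summing contribution (iii) over the $\ell_k^{d-1}$ face pairs yields $\tfrac{\epsilon_k}{2}|\Xi(B^*_1,B^*_2)|$, while (i) and (ii) together contribute at most $(3^{d-1}+1)|\Xi(B^*_1,B^*_2)|/\ell_k^{d-1}$; rearranging gives Part 4 at level $k+1$ with the stated recursion $\epsilon_{k+1} = \epsilon_k + \tfrac{2(3^{d-1}+1)}{\ell_k^{d-1}}$, and condition $\mathbf{A2}$ then closes the induction with $\epsilon_k \le 1/10$ uniformly.

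The hard part will be the accounting in Part 4: the accumulated error $\epsilon_k$ must stay below $1/10$ to sustain the sign argument in Part 1 at every scale, and this balance is only maintained because the decay $1/\ell_k^{d-1}$ in the recurrence is summable under $\mathbf{A2}$. Secondary care is required in matching the checkerboard colorings of $\partial^{(0)}(B^*_1)$ and of each $\partial^{(0)}(B_k)$ (resolved by invoking the inductive Part 4 for either color class via symmetry), and in verifying that $\mathcal{I}_{B^*}$ is sufficiently simply connected in Part 2 for propagation to succeed, which follows from the single-bad-sub-block constraint limiting any non-contractible obstructions to a bounded region.
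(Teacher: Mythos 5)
Your proposal is correct and follows essentially the same route as the paper: the same induction, with Part 1 obtained by combining honesty of the edge $(B_1,B_2)$ with the inductive Part 4 to show $Y^{(k)}_{B_1,B_2}=S^{(k)}_{B_1}S^{(k)}_{B_2}$ for good sub-blocks, Parts 2 and 3 via connectivity of $\mathcal{I}_{B^*}$, and Part 4 by the same accounting (at most $3^{d-1}+1$ written-off face pairs plus the inductive deficit $\epsilon_k$ per remaining pair), yielding the identical recursion for $\epsilon_{k+1}$. The coloring-matching and connectivity caveats you flag are real but minor, and are handled at the same level of detail as in the paper.
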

\begin{proof}
We proceed by induction. The base case $k=0$ is clear: we can take $S_{B}^{(0)} = \tilde{\theta}_{B}$ for all level-$0$ blocks $B$ (which are all good by definition). Now assume the above statements for some $k\ge 0$. Let $B_1^*$ be a level-$(k+1)$ good block and $B_1 \sim B_2$ two good and adjacent level-$k$ sub-blocks of $B_1^*$. We have 
\begin{align*}
Y^{(k)}_{B_1,B_2} &= \sign\Big(\sum_{(B,B')\in \Xi(B_1,B_2)} \widetilde{Y}_{B,B'} \widetilde{W}^{(k-1)}_{B}\widetilde{W}^{(k-1)}_{B'}\Big)\\
&= S_{B_1}^{(k)}S_{B_2}^{(k)} \cdot \sign\Big(\sum_{(B,B')\in \Xi(B_1,B_2)} \widetilde{Y}_{B,B'} \big(\widetilde{W}^{(k-1)}_{B}S_{B_1}^{(k)}\big)\big(\widetilde{W}^{(k-1)}_{B'}S_{B_2}^{(k)}\big)\Big).
\end{align*}
We want to show that the sum in the above display is positive. Item 4 in the induction hypothesis implies
\[\sum_{(B,B')\in \Xi(B_1,B_2)} \widetilde{Y}_{B,B'} \big(\widetilde{W}^{(k-1)}_{B}S_{B_1}^{(k)}\big)\big(\widetilde{W}^{(k-1)}_{B'}S_{B_2}^{(k)}\big) \ge \sum_{(B,B')\in \Xi(B_1,B_2)} \widetilde{Y}_{B,B'} \tilde{\theta}_{B}\tilde{\theta}_{B'} - 2\epsilon_k |\Xi(B_1,B_2)|.\]  
Now since $B_1^*$ is good the edge $(B_1,B_2)$ is honest and the above is further lower-bounded by
\[\big(\frac{9}{10}\scdelta - 2\epsilon_k\big)|\Xi(B_1,B_2)| \ge \frac{3}{5}|\Xi(B_1,B_2)|>0,\] 
for $\scdelta > \frac{9}{10}$ (i.e., $\scp < \frac{1}{20}$.) Therefore
\[Y^{(k)}_{B_1,B_2} = S_{B_1}^{(k)}S_{B_2}^{(k)},\]
for all $B_1 \sim B_2$ good sub-blocks. This means that quartets of good sub-blocks in a good block are necessarily coherent, proving item 1. 
The set $\mathcal{I}_{B_1^*}$ being connected, it follows that there exists $S_{B_1^*}^{(k+1)}\in \{\pm 1\}$ such that for all level-$k$ good sub-blocks $B_1 \in \mathcal{I}_{B_1^*}$,
\begin{equation}\label{eq:w_b}
W_{B_1}^{(k)} = S_{B_1}^{(k)}S_{B_1^*}^{(k+1)},
\end{equation}
hence the event $\mathcal{H}_{B_1^*}^{(k+1)}$ holds. Moreover, for such a good $B_1 \in \mathcal{I}_{B_1^*}$ we have
\[S_{B_1^*}^{(k+1)} \widetilde{W}^{(k)}_{B} = S_{B_1^*}^{(k+1)} W^{(k)}_{B_1} \widetilde{W}^{(k-1)}_{B} = S_{B_1}^{(k)}\widetilde{W}^{(k-1)}_{B} = \tilde{\theta}_B,\]  
for all level-$0$ blocks $B$ whose ancestors are good and agreeable. Hence item 3 holds at level $k+1$.
It remains to verify item 4 at level $k+1$. 
Let $B_2^*$ be a good level-$(k+1)$ block adjacent to $B_1^*$. We have
\begin{align*}
\sum_{\underset{\mathsf{col}(B)=1}{B \in B_1^* \cap \partial B_2^*}} \tilde{\theta}_{B} S_{B_1^*}^{(k+1)} \widetilde{W}^{(k)}_{B} = 
\sum_{B_1^* \ni \bar{B}_1 \sim \bar{B}_2 \in B_2^*}  \Big(\sum_{\underset{\mathsf{col}(B)=1}{ \bar{B}_1 \ni B \sim B' \in  \bar{B}_2}}  \tilde{\theta}_{B}  \widetilde{W}^{(k-1)}_{B} \Big) S_{B_1^*}^{(k+1)} W_{\bar{B}_1}^{(k)},
\end{align*}
where the outer sum is on level-$k$ sub-blocks $\bar{B}_1$ and $\bar{B}_2$. 
Let $\mathcal{A}$ be the subset of those $\bar{B}_1$, level-$k$ sub-blocks of $B_1^*$ which are good, are in $\mathcal{I}_{B_1^*}$, and are adjacent to a good sub-block $\bar{B}_2 \in B_2^*$. Since there is only one bad sub-block in $B_1^*$, there are at most $3^{d-1}$ positions on the boundary of $B_1^*$ which could possibly belong to an incoherent quartet because this quartet must necessarily contain that single bad sub-block. Moreover, $B_2^*$ being good, it has only one bad sub-block. Therefore the above sum is lower-bounded by
\begin{align*}
&\sum_{\bar{B}_1 \in \mathcal{A}}  \Big(\sum_{\underset{\mathsf{col}(B)=1}{ \bar{B}_1 \ni B \sim B' \in  \bar{B}_2}}  \tilde{\theta}_{B}  \widetilde{W}^{(k-1)}_{B} \Big) S_{B_1^*}^{(k+1)} W_{\bar{B}_1}^{(k)} - (1+3^{d-1}) |\Xi(\bar{B}_1,\bar{B}_2)| \\
&= \sum_{\bar{B}_1 \in \mathcal{A}}  \Big(\sum_{\underset{\mathsf{col}(B)=1}{ \bar{B}_1 \ni B \sim B' \in  \bar{B}_2}}  \tilde{\theta}_{B}  \widetilde{W}^{(k-1)}_{B} S^{(k)}_{\bar{B}_1}\Big)  - (1+3^{d-1}) |\Xi(\bar{B}_1,\bar{B}_2)|.
\end{align*}
We use item 4 of the induction hypothesis: $\underset{\underset{\mathsf{col}(B)=1}{ \bar{B}_1 \ni B \sim B' \in  \bar{B}_2}}{\sum}  \tilde{\theta}_{B}  \widetilde{W}^{(k-1)}_{B} S^{(k)}_{\bar{B}_1} \ge (1-\epsilon_k) |\Xi(\bar{B}_1,\bar{B}_2)|$, whence the above is lower bounded by
\begin{align*}
&\sum_{\bar{B}_1 \in \mathcal{A}}  (1-\epsilon_k)  |\Xi(\bar{B}_1,\bar{B}_2)| - (1+3^{d-1}) |\Xi(\bar{B}_1,\bar{B}_2)| \\ 
&\ge \sum_{\underset{\mathsf{col}(B)=1}{B_1^* \ni \bar{B}_1 \sim \bar{B}_2 \in B_2^*}}  (1-\epsilon_k)  |\Xi(\bar{B}_1,\bar{B}_2)| -  (2-\epsilon_k)(1+3^{d-1}) |\Xi(\bar{B}_1,\bar{B}_2)|\\
&= (1-\epsilon_k) |\Xi(B_1^*,B_2^*)| - (2-\epsilon_k)(1+3^{d-1}) |\Xi(\bar{B}_1,\bar{B}_2)|.
\end{align*}
Since $|\Xi(B_1^*,B_2^*)|/|\Xi(\bar{B}_1,\bar{B}_2)| = \ell_k^{d-1}$ we obtain
\begin{align*}
\sum_{\underset{\mathsf{col}(B)=1}{B \in B_1^* \cap \partial B_2^*}} \tilde{\theta}_{B} S_{B_1^*}^{(k+1)} \widetilde{W}^{(k)}_{B} &\ge \Big(1-\epsilon_k - \frac{2(1+3^{d-1})}{\ell_k^{d-1}}\Big) |\Xi(B_1^*,B_2^*)|\\
&= (1-\epsilon_{k+1}) |\Xi(B_1^*,B_2^*)|.
\end{align*}
This concludes the inductive argument. 
\end{proof}

We now finish the proof of Theorem~\ref{thm:multiscale}. Let $B_1$ and $B_2$ be two level-$0$ blocks and let $B^*$ be their most recent common ancestor in the hierarchical partitioning; we let $k$ be its level.  Item 2 in Lemma~\ref{lem:induction_agreeable} implies that a sufficient condition for accurately synchronizing $B_1$ and $B_2$, i.e., $\widetilde{T}_{B_1,B_2} = \tilde{\theta}_{B_1} \tilde{\theta}_{B_2}$, is that $B_1$ and $B_2$ along with their ancestors $B^{(j)}_{1}$ and $B^{(j)}_{2}$ for $j=0,\cdots,k$ up to $B^*$ are all good and agreeable. (Here we have $B^{(0)}_{1} = B_1$ and $B^{(k)}_{1} = B^*$ and similarly for $B_2$.) Let us call this event $J_{B_1,B_2}^{(k)}$. By a union bound we obtain
\[1-\P\big(J_{B_1,B_2}^{(k)}\big) \le \sum_{j=0}^{k-1} \sum_{a=1}^2\P\big(B^{(j)}_{a} ~\mbox{is bad or non-agreeable}\big)+\P\big(B^* ~\mbox{is bad}\big).\]
A non-agreeable block must belong to at least one incoherent quartet. Since a quartet of good sub-blocks belonging to a good block is necessarily coherent, a non-agreeable block inside a good block is the center of a cube of side-length 3 in which at least one block is bad. Hence for $0 \le j \le k-1$ and $a=1,2$,
\begin{align*}
\P\big(B^{(j)}_{a} ~\mbox{is bad or non-agreeable}\big) &\le \P\big(B^{(j)}_{a} ~\mbox{is bad or non-agreeable}~ \big| B^{(j+1)}_{a} ~\mbox{is good} \big)\\
&~~~+ \P\big(B^{(j+1)}_{a} ~\mbox{is bad} \big) \\
&\le  3^d \P\big(B^{(j)}_{a} ~\mbox{is bad}\big) +  \P\big(B^{(j+1)}_{a} ~\mbox{is bad} \big).
\end{align*}
Therefore,
\begin{align*}
1-\P\big(J_{B_1,B_2}^{(k)}\big) &\le 2 \cdot 3^d  \sum_{j=0}^{k-1} \P\big(B^{(j)}_{1} ~\mbox{is bad}\big) + 2 \sum_{j=1}^{k} \P\big(B^{(j)}_{1} ~\mbox{is bad}\big) + \P\big(B^* ~\mbox{is bad}\big)\\
&\le 2 (1+3^d) \sum_{j=1}^{k} \P\big(B^{(j)}_{1} ~\mbox{is bad}\big)\\
&\le 2 (1+3^d) \sum_{j=0}^{\infty} j^{2d} (2\kappa)^{-(d-1)(j+6)}
\le \frac{1}{21}.
\end{align*}
The third inequality follows from Lemma~\ref{lem:bad_block}, which requires $\scp \le p_0$, and this can be satisfied if $\scL$ is large enough and $\eta$ small enough, as already proved in Theorem~\ref{thm:renormalization_flow}. The last inequality is condition $\mathbf{A3}$ in Eq.~\eqref{eq:scales_condition}. 

Finally, concerning the running time of the algorithm, $B_1$ and $B_2$ are separated by distance at most $\frac{2n}{\scL}$. Further, the side length of a block of level $j$, $\ell_j$, grows super-exponentially, hence $B^*$ is of level $k=\bigo(\log n)$. 
At every level $j$, the random variables $(W^{(j)}_{B^{(j)}})$ and $(Y_{B_1,B_2}^{(j)})$ are constructed in time $c_0\ell_j^d$ for some absolute constant $c_0$, and there are $n_j = \big(2n \big/ \scL \cdot \prod_{j'\le j-1} \ell_{j'}\big)^d$ many blocks of level $j$. 
Therefore the variables $(\tilde{W}^{(k)}_{B})_{B \in \bbB_n}$ are constructed in time 
\begin{align*}
\sum_{j=0}^{k} (\ell_j n_j)^d &= \Big(\frac{2n}{\scL}\Big)^d\Big(1+\sum_{j=1}^{k} \prod_{j'=0}^{j-1} \ell_{j'}^{-d} \Big) \\
&\le c_0(d) \Big(\frac{2n}{\scL}\Big)^d.
\end{align*}
The final step of the algorithm is to take pairwise products $\widetilde{T}_{B_1,B_2} = \tilde{W}^{(k)}_{B_1}\tilde{W}^{(k)}_{B_2}$ which takes $\bigo(\big(\frac{2n}{\scL}\big)^{2d})$ steps. Since $\scL$ doesn't depend on $n$, the runtime of the whole algorithm is $\bigo(n^{2d})$.

\section{Posterior measures}
\label{sec:posteriors}
The analysis of the renormalization procedure relies on understanding the concentration properties of the posterior measure of $\theta_A = (\theta_x)_{x \in A}$ given information associated to $A \subset \Z^d$, for various subsets $A$. Concretely we will investigate the asymptotic properties of the `free energies' which are the normalized log-partition functions of the posterior measures and then extract convergence and concentration properties of certain overlaps. These free energies will converge to variational formulas given in terms of a simpler system where the side information is decoupled across vertices. We begin by describing this setting.

\subsection{Scalar side information}
Let $(B_n)_{n \ge 1}$ be a sequence of finite connected subsets of $\Z^d$ which is increasing: $B_{n} \subseteq B_{n+1}$, invades the entire lattice: $\bigcup_{n\ge 1} B_n = \Z^d$, and has vanishing isoperimetry: $\underset{n\to \infty}{\lim} |\partial B_n|/|B_n| = 0$. We call such a sequence a \emph{van Hove} sequence~\cite{friedli2017statistical}.
Consider the scalar additive Gaussian noise channel with SNR $\lambda \ge 0$ on $B_n$:
\begin{equation}\label{eq:gaussian_channel}
y_u = \sqrt{\lambda}\theta_u +z_u, ~~~ \forall u \in B_n,
\end{equation}
where $(z_u)_{u \in B_n}$ are i.i.d.\ Gaussian random variables independent of everything else.
Consider the posterior measure of $\theta_{B_n}$ given the lattice information $Y^{\delta}_{B_n} = \{Y^{\delta}_{uv}: (u,v) \in B_n^2 \cap \mathbb{E}^d\}$ and $Y^{\lambda}_{B_n}= \{y_u: u \in B_n\}$:
\begin{equation}\label{eq:gibbs_Bn_scalar}
\P\Big(\big(\theta_x\big)_{x\in B_n} \big| Y^{\delta}_{B_n}, Y^{\lambda}_{B_n} \Big) = \frac{1}{Z_{B_n}} e^{H_{B_n}^{\delta}(\theta) + H_{B_n}^{\lambda}(\theta)},
\end{equation}
where
\begin{equation}\label{eq:hamiltonian_lattice}
H_{B_n}^{\delta}(\theta) := \beta\sum_{\underset{|u-v|= 1}{u,v\in B_n}} Y^{\delta}_{uv}\theta_u\theta_v,
\end{equation}
with $\beta = \frac{1}{2}\log(\frac{1-p}{p})$ is the Hamiltonian encoding the lattice interaction and
\begin{equation}\label{eq:hamiltonian_scalar}
H_{B_n}^{\lambda}(\theta) = \sum_{u \in B_n} \big(\sqrt{\lambda}y_u\theta_u - \frac{\lambda}{2}\big),
\end{equation}
encodes the side information $Y^{\lambda}_{B_n}$.
We define the free energy  associated to this posterior by
\begin{equation}\label{eq:free_energy_scalar}
\fsc_{B_n}(\delta,\lambda) := \frac{1}{|B_n|} \E\log \Big\{2^{-|B_n|}\sum_{\theta\in \{\pm 1\}^{B_n}} e^{H_{B_n}^{\delta}(\theta) + H_{B_n}^{\lambda}(\theta)}\Big\},
\end{equation}
where the expectation is with respect to $Y^{\delta}_{B_n}$ and $Y^{\lambda}_{B_n}$. 
We also define a more general function with two parameters $s \in \R$ and $r \ge 0$:
\begin{equation}\label{eq:free_energy_scalar_general}
\widetilde{\fsc_{B_n}}(\delta,s,r) := \frac{1}{|B_n|} \E\log \Big\{2^{-|B_n|}\sum_{\theta\in \{\pm 1\}^{B_n}} e^{H_{B_n}^{\delta}(\theta) + {\widetilde{H}}^{r,s}_{B_n}(\theta)}\Big\},
\end{equation}
where 
\[{\widetilde{H}_{B_n}}^{r,s}(\theta) :=  \sum_{u \in B_n} \big(\sqrt{r}z_u\theta_u + s\theta_u\theta_{0u} - \frac{r}{2}\big),\]
where $(z_u)_{u \in B_n}$ are i.i.d.\ $N(0,1)$ r.v.'s and $(\theta_{0u})_{u \in B_n}$ are independent symmetric signs. Note that $\widetilde{\fsc_{B_n}}(\delta,r,r) = \fsc_{B_n}(\delta,r)$ for all $r \ge 0$\footnote{When $ s\neq r$, the Gibbs measure proportional to $e^{H_{B_n}^{\delta}(\theta) + {\widetilde{H}}^{r,s}_{B_n}(\theta)}$ doe not have a Bayesian interpretation as the posterior distribution of a set of random variables given some sort of observations.}. 

\begin{proposition} \label{prop:limit_Bn_scalar}
For all $\delta \in [0,1]$, $s\in\R$ and $r \ge 0$, $\widetilde{\fsc_{B_n}}(\delta,s,r)$ has a limit $\widetilde{\fsc}(\delta,s,r)$ as $n \to \infty$ which does not depend on the van Hove sequence $(B_n)$. Moreover the convergence is uniform on any compact set $[0, \delta_0] \times [-s_0,s_0]\times [0,r_0]$ with $s_0,r_0>0$ and $\delta_0<1$.
\end{proposition}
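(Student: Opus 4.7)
\medskip\noindent My plan is to follow the standard thermodynamic limit argument for disordered lattice systems, in three steps: (i) existence of the limit along cubes via approximate subadditivity, (ii) extension to arbitrary van Hove sequences using $|\partial B_n|/|B_n|\to 0$, and (iii) uniform convergence on compacts via equicontinuity. As a preliminary, the gauge transformation $\sigma_u=\tau_u\theta_u$ (where $\tau$ denotes the summation variable in the partition function and $\theta$ the ground truth generating $Y^{\delta}$) rewrites the Hamiltonian as
\[
\mathcal{H}_{B_n}(\sigma)=\beta(\delta)\sum_{\underset{|u-v|=1}{u,v\in B_n}}\epsilon_{uv}\sigma_u\sigma_v+\sum_{u\in B_n}\bigl(\sqrt{r}\,z_u\sigma_u+s\,\xi_u\sigma_u-r/2\bigr),
\]
where $(\epsilon_{uv})$ are i.i.d.\ $\pm 1$ with $\P(\epsilon=+1)=(1+\delta)/2$, $(z_u)$ are i.i.d.\ $N(0,1)$, and $(\xi_u)$ are i.i.d.\ Rademacher, all mutually independent (in particular $z_u\theta_u\stackrel{d}{=}z_u$ and $\theta_u\theta_{0u}\stackrel{d}{=}\xi_u$). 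The parameters $(\delta,s,r)$ now enter only through smooth coefficients, with the disorder laws of $z$ and $\xi$ independent of them.

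\medskip\noindent For step (i), I would partition $\Lambda_{km}$ into $k^d$ disjoint translates of $\Lambda_m$. The single-site part of $\mathcal{H}_{B_n}$ splits exactly; the lattice part decomposes into intra-cube Hamiltonians plus a residual $R$ supported on the $\bigo(k^d m^{d-1})$ cut edges, with $\|R\|_{\infty}\le\beta(\delta_0)\cdot\bigo(k^d m^{d-1})$. The elementary bound $|\log\sum_{\sigma}e^{H_1(\sigma)}-\log\sum_{\sigma}e^{H_2(\sigma)}|\le\|H_1-H_2\|_{\infty}$, combined with independence of the disorder across distinct cubes and translation invariance, yields $|\widetilde{\fsc_{\Lambda_{km}}}(\delta,s,r)-\widetilde{\fsc_{\Lambda_m}}(\delta,s,r)|\le C(\delta_0)/m$, so the sequence $(\widetilde{\fsc_{\Lambda_m}})_m$ is Cauchy and converges to some $\widetilde{\fsc}(\delta,s,r)$. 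For step (ii), given a van Hove sequence $(B_n)$ and a fixed $m$, I would tile $\R^d$ by translates of $\Lambda_m$ and retain those cells contained in $B_n$; the remaining sites together with the edges not internal to a retained cell lie in an $m$-fattening of $\partial B_n$, hence have cardinality $\bigo(m\,|\partial B_n|)=\smallo(|B_n|)$. The same comparison then gives $\widetilde{\fsc_{B_n}}(\delta,s,r)=\widetilde{\fsc_{\Lambda_m}}(\delta,s,r)+\smallo_n(1)+\bigo(1/m)$, and sending $n\to\infty$ followed by $m\to\infty$ produces the common limit $\widetilde{\fsc}(\delta,s,r)$, independent of the chosen van Hove sequence.

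\medskip\noindent For step (iii), I would establish equicontinuity of $\{\widetilde{\fsc_{B_n}}\}_n$ on $K=[0,\delta_0]\times[-s_0,s_0]\times[0,r_0]$, uniformly in $n$. In $s$, direct differentiation under the expectation gives $\partial_s\widetilde{\fsc_{B_n}}=|B_n|^{-1}\sum_u\E\langle\xi_u\sigma_u\rangle\in[-1,1]$. In $r$, the change of variable $\lambda=\sqrt{r}$ and Gaussian integration by parts yield $\partial_\lambda\widetilde{\fsc_{B_n}}=-\lambda\,|B_n|^{-1}\sum_u\E\langle\sigma_u\rangle^2\in[-\sqrt{r_0},0]$, producing Lipschitz continuity in $\lambda$ and hence $\tfrac{1}{2}$-H\"older continuity in $r$. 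In $\delta$, I would couple the disorders via $Y^{\delta}_{uv}=\theta_u\theta_v\sign\bigl(U_{uv}-(1-\delta)/2\bigr)$ with $(U_{uv})$ i.i.d.\ uniform on $[0,1]$ independent of $\delta$, so that $Y^{\delta}$ and $Y^{\delta'}$ differ on at most $\bigo(|\delta-\delta'|\cdot|B_n|)$ edges; combined with $|\beta(\delta)-\beta(\delta')|\le|\delta-\delta'|/(1-\delta_0^2)$ on $K$ and the bound $\|\mathcal{H}^{\delta}_{B_n}-\mathcal{H}^{\delta'}_{B_n}\|_\infty\le C(d,\delta_0)|\delta-\delta'|\cdot|B_n|$, this yields Lipschitz continuity of $\widetilde{\fsc_{B_n}}$ in $\delta$ uniformly in $n$. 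Pointwise convergence from (ii) together with this equicontinuity upgrades to uniform convergence on $K$ by Arzel\`a-Ascoli. The only mildly delicate part of the argument is the boundary bookkeeping in (ii), where the van Hove property is precisely what is needed to ensure that the volume of cells straddling $\partial B_n$ is $\smallo(|B_n|)$ at fixed $m$; the restriction $\delta_0<1$ enters only through keeping $\beta(\delta_0)$ and $\beta'(\delta_0)$ finite.
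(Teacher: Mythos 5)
Your proposal is correct and follows essentially the same route as the paper: approximate subadditivity along cubes with $\bigo(|B|^{(d-1)/d})$ control of the cut edges, extension to arbitrary van Hove sequences by covering with translates of a fixed cube and absorbing the $\bigo(m\,|\partial B_n|)$ discrepancy, exactly as in the Friedli--Velenik argument the paper adapts. The only divergence is in the uniformity step: the paper gets uniform convergence for free because its Cauchy and boundary error bounds depend on the parameters only through constants bounded on compacts, whereas you establish equicontinuity via derivative bounds and a coupling in $\delta$ and then upgrade pointwise to uniform convergence — a slightly longer but equally valid (and in the $\delta$ variable, more explicit) route; note only that after gauging by the ground truth the field term $s\theta_u\theta_{0u}$ becomes the deterministic $s\sigma_u$ rather than a Rademacher field, which is immaterial since single-site terms split exactly.
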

We let $\fsc(\delta,\lambda):=\widetilde{\fsc}(\delta,\lambda,\lambda)$ be the limit of $\fsc_{B_n}(\delta,\lambda)$. 
We do not know of an `explicit' expression for either $\widetilde{\fsc}$ or $\fsc$, but some of their properties are revealed through the sequence $\widetilde{\fsc_{B_n}}$. For instance, convexity and monotonicity are inherited by passage to the limit:
\begin{lemma}\label{lem:convexity}
For all $\delta \in [0,1]$ and $r \ge 0$, the map $s \mapsto \widetilde{\fsc}(\delta,s,r)$ is even and convex, and the map $\lambda \mapsto \fsc(\delta,\lambda)$ is nondecreasing and convex.
\end{lemma}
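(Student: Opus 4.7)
The plan proceeds in two stages: establish each property at the finite-volume level for $\widetilde{\fsc_{B_n}}$ (or $\fsc_{B_n}$), and then invoke Proposition~\ref{prop:limit_Bn_scalar} to pass to the limit, since evenness, convexity, and monotonicity are all preserved under pointwise convergence.

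For evenness of $s \mapsto \widetilde{\fsc_{B_n}}(\delta,s,r)$, I would use symmetry of the auxiliary prior: the joint law of $(\theta_{0u})_{u \in B_n}$ is invariant under the global sign flip $\theta_{0u} \to -\theta_{0u}$, and making this change of variables inside the expectation defining $\widetilde{\fsc_{B_n}}$ is exactly equivalent to sending $s \to -s$ in $\widetilde{H}_{B_n}^{r,s}$. For convexity in $s$, I would observe that $\widetilde{H}_{B_n}^{r,s}$ depends \emph{linearly} on $s$ through the single term $s\sum_u \theta_u \theta_{0u}$, so for every realization of the disorder $(Y^{\delta}, z, \theta_0)$ the log-partition function $\log Z_{B_n}(s) = \log \sum_\theta \exp(H_{B_n}^\delta(\theta) + \widetilde H_{B_n}^{r,s}(\theta))$ is the logarithm of a moment generating function of the observable $\sum_u \theta_u \theta_{0u}$ under the base Gibbs measure indexed by $(\delta, r, z)$; log-MGFs are convex, and expectation preserves convexity.

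For $\fsc(\delta,\lambda) = \widetilde{\fsc}(\delta,\lambda,\lambda)$, I would work directly with $\fsc_{B_n}(\delta,\lambda)$ rather than reduce to $\widetilde{\fsc}$, so as to exploit that~\eqref{eq:gibbs_Bn_scalar} is the genuine Bayes posterior for $\theta_{B_n}$ given $(Y^{\delta}_{B_n},Y^{\lambda}_{B_n})$. Differentiating in $\lambda$ and applying Gaussian integration by parts to the noise $(z_u)$ appearing in $y_u = \sqrt{\lambda}\theta_{0u} + z_u$, together with $\theta_u^2 = 1$, should yield the standard I-MMSE--style identity
\begin{equation*}
\frac{d}{d\lambda} \fsc_{B_n}(\delta,\lambda) \;=\; \frac{1}{|B_n|}\sum_{u \in B_n}\Big[\E\langle \theta_u \theta_{0u}\rangle \,-\, \tfrac{1}{2}\E\langle\theta_u\rangle^2 \Big].
\end{equation*}
The Nishimori identity collapses this to $\frac{1}{2|B_n|}\sum_u q_u(\lambda)$ with $q_u(\lambda) := \E\langle\theta_u\rangle^2 \ge 0$, giving monotonicity. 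For convexity, it suffices that each $q_u(\lambda)$ be nondecreasing; but $q_u(\lambda) = 1 - \E[\var(\theta_u \mid Y^{\delta}_{B_n},Y^{\lambda}_{B_n})]$, so this is the familiar statement that the Bayes MMSE is nonincreasing in SNR. I would prove the latter by the standard Gaussian coupling: for $\lambda_1 < \lambda_2$, set $y_u^{(\lambda_1)} = \sqrt{\lambda_1/\lambda_2}\, y_u^{(\lambda_2)} + \sqrt{1 - \lambda_1/\lambda_2}\, z'_u$ with $z'_u$ independent $N(0,1)$; then the SNR-$\lambda_2$ channel Blackwell-dominates the SNR-$\lambda_1$ channel coordinate-wise, and optimality of conditional expectation yields $\text{mmse}(\lambda_2) \le \text{mmse}(\lambda_1)$.

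The only piece with any computational substance is the Gaussian IBP plus Nishimori identity; this is by now routine in Bayesian high-dimensional inference, so I do not expect a serious obstacle. The real content of the lemma is simply that these finite-volume inequalities survive the thermodynamic limit, which is precisely what Proposition~\ref{prop:limit_Bn_scalar} provides.
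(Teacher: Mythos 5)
Your proposal is correct and matches the paper's (largely implicit) argument: the paper states the lemma without a written proof, relying on exactly the finite-volume observations you make — evenness and convexity in $s$ from the sign-flip symmetry of $\theta_0$ and the linearity of $\widetilde{H}^{r,s}_{B_n}$ in $s$, the derivative identity $\frac{\rmd}{\rmd\lambda}\fsc_{B_n}=\frac{1}{2}\varphi^{\sv}_{B_n}\ge 0$ of Eq.~\eqref{eq:overlap_scalar} for monotonicity, and passage to the limit via Proposition~\ref{prop:limit_Bn_scalar}. The only (immaterial) difference is that for convexity in $\lambda$ you invoke monotonicity of the MMSE via channel degradation, whereas the paper's route is to compute the second derivative directly as a sum of squared covariances (cf.\ the identity in Lemma~\ref{lem:decoupling_scalar}); both are standard and equivalent.
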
 
Hence the map $\lambda \mapsto \fsc(\delta,\lambda)$ is differentiable everywhere expect on a countable set of points $\mathcal{D}$ possibly depending on $\delta$.  
Lemma~\ref{lem:convexity} implies that the derivatives $\frac{\rmd}{\rmd \lambda}\fsc_{B_n}$ converges to $\frac{\rmd}{\rmd \lambda}\fsc$ on $\R_+ \setminus \mathcal{D}$.  
An easy computation of the derivative of $f_{B_n}$ reveals that for all $\delta \in [0,1]$ and $\lambda \in \R_+ \setminus \mathcal{D}$, 
\begin{equation}\label{eq:overlap_scalar}
\frac{1}{2}\varphi^{\sv}_{B_n}:= \frac{1}{2|B_n|}\sum_{x\in B_n} \E\Big[\E\big[\theta_x | Y^{\delta}_{B_n}, Y^{\lambda}_{B_n}\big]^2\Big] \xrightarrow[n \to \infty]{} \frac{\rmd}{\rmd \lambda}\fsc.
\end{equation}

 Lemma~\ref{lem:convexity} also implies that $s \mapsto \widetilde{\fsc}(\delta,s,r)$ is differentiable on $\R \setminus \mathcal{D}'$ where $\mathcal{D}'$ is countable and depends on $\delta$ and $r$.
For technical reasons we will need the following regularity condition on $\fsc$:   
\begin{assumption}\label{assump:lipschitz} 
We assume the following.
\begin{itemize}
\item[A1.] For all $\delta>\delta_c$, the map $\lambda \mapsto \frac{\rmd}{\rmd \lambda}\fsc(\delta, \lambda)$ is Lipschitz on $\R_+\setminus \mathcal{D}$ with Lipschitz constant $L = L(\delta)<\infty$.
\item[A2.] For all $\delta>\delta_c$ and all $r \ge 0$, the map $s \mapsto \frac{\rmd}{\rmd s}\widetilde{\fsc}(\delta, s,r)$  is Lipschitz on $\R\setminus \mathcal{D}'$ uniformly in $r$ with Lipschitz constant $L' = L'(\delta)<\infty$. 
\end{itemize}
\end{assumption}
 These assumptions are highly plausible and are supported by physical intuition: we do not expect these model to have a phase transition in the strength $s$ of the magnetic field for $\delta > \delta_c$. For instance when $r=s = \lambda$, the scalar side information has the effect of destroying long range correlation, see e.g.,~\cite{MontanariSparse}, and every spin $\theta_x$ gains a small and independent bias towards the ground truth assignment when $\lambda>0$. This bias must increase in a smooth way as $\lambda$ increases, and the free energy should be analytic in $\lambda$.
 This assumption will be explicitly mentioned whenever needed, and not required if not invoked.

\subsection{GOE side information}
\label{sec:GOEside}
The side information  we are ultimately interested in is the spiked GOE $Y^{\eta}_{B_n} =\{Y^{\eta}_{uv}: u,v \in B_n\}$ where $Y^{\eta}_{uv} = \sqrt{\frac{\eta}{|B_n|}}\theta_u \theta_v  + Z_{uv}$, $Z_{uv} \sim N(0,1)$ independent of everything else. The posterior measure is
\begin{equation}\label{eq:gibbs_Bn}
\P\Big(\big(\theta_x\big)_{x\in B_n} \big| Y^{\delta}_{B_n}, Y^{\eta}_{B_n} \Big) = \frac{1}{Z_{B_n}}   e^{H_{B_n}^{\delta}(\theta) + H_{B_n}^{\eta}(\theta)},
\end{equation}
where
\begin{equation}
\label{eq:hamiltonian_goe}
H_{B_n}^{\eta}(\theta) := \sum_{u,v\in B_n} \Big(\sqrt{\frac{\eta}{|B_n|}} Y_{uv}^{\eta}\theta_{u}\theta_{v} - \frac{\eta}{2|B_n|} \Big).
\end{equation}
We similarly define the free energy 
\begin{equation}\label{eq:free_energy_Bn}
f_{B_n}(\delta,\eta) := \frac{1}{|B_n|} \E\log \Big\{2^{-|B_n|}\sum_{\theta\in \{\pm 1\}^{B_n}} e^{H_{B_n}^{\delta}(\theta) + H_{B_n}^{\eta}(\theta)}\Big\}.
\end{equation}
\begin{proposition} \label{prop:limit_Bn_GOE}
For all $\delta \in [0,1]$, $\eta \ge 0$, $f_{B_n}(\delta,\eta)$ has a limit $\phi(\delta,\eta)$ as $n \to \infty$ which does not depends on the van Hove sequence $(B_n)$, and we have the relation 
\begin{equation}\label{eq:var_formula_Bn}
\phi(\delta,\eta) = \sup_{q \ge 0} \Big\{ \fsc(\delta,\eta q) - \frac{\eta q^2}{4} \Big\}.
\end{equation}
\end{proposition}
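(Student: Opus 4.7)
The plan is to prove the variational formula via a Guerra--Toninelli interpolation between the spiked GOE side information and a decoupled scalar Gaussian channel of SNR $\eta q$, treating $q \ge 0$ as a free parameter. The lattice Hamiltonian $H^{\delta}_{B_n}$ is untouched by the interpolation and plays only the role of a passive reference. For a fixed $q$, introduce an interpolating Hamiltonian indexed by $t \in [0,1]$ that mixes (i) a fresh copy of the GOE observations at rescaled SNR $t\eta$ and (ii) an independent scalar channel $y_u = \sqrt{(1-t)\eta q}\,\theta_u + z_u$ at SNR $(1-t)\eta q$, with fresh independent Gaussian noise for both. Letting $\phi_n(t;q)$ denote the corresponding free energy, we have $\phi_n(1;q) = f_{B_n}(\delta,\eta)$ and $\phi_n(0;q) = \fsc_{B_n}(\delta,\eta q)$. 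Crucially, the model remains Bayes-optimal throughout the interpolation, so the Nishimori identities hold at each $t$.

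Differentiating $\phi_n$ in $t$, applying Gaussian integration by parts separately for $Z_{uv}$ and $z_u$, and using Nishimori to identify $\E\langle R_{01}^a\rangle = \E\langle R_{12}^a\rangle$ for $a \in \{1,2\}$, a direct computation yields
\[
\frac{d}{dt}\phi_n(t;q) \;=\; \frac{\eta}{4}\E\big\langle (R_{12} - q)^2\big\rangle_t \;-\; \frac{\eta q^2}{4} \;+\; o_n(1),
\]
where $R_{12} = \frac{1}{|B_n|}\sum_{u \in B_n}\theta_u^1\theta_u^2$ is the overlap between two independent replicas drawn from the interpolating Gibbs measure. Integrating in $t$ and discarding the non-negative overlap variance term gives, for every $q \ge 0$,
\[
f_{B_n}(\delta,\eta) \;\ge\; \fsc_{B_n}(\delta,\eta q) \;-\; \frac{\eta q^2}{4} \;+\; o_n(1).
\]
Passing $\fsc_{B_n} \to \fsc$ via Proposition~\ref{prop:limit_Bn_scalar} and taking the supremum over $q$ produces the lower bound $\liminf_n f_{B_n}(\delta,\eta) \ge \sup_{q \ge 0}\{\fsc(\delta,\eta q) - \eta q^2/4\}$.

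The matching upper bound is the main technical obstacle. The cleanest route is the adaptive interpolation of Barbier--Macris: replace the constant $q$ by a function $q_n(t)$ evolving along the interpolation as the solution of a well-chosen ODE driven by the expected self-overlap of the interpolating model; the derivative then acquires an extra convexity-matching term that, after integration, precisely cancels the overlap variance integral and yields the reverse inequality. An alternative, when overlap concentration on the Nishimori line is directly accessible, is to pick $q_n$ to be the saddle-point value $\E\langle R_{12}\rangle$ extracted from $f_{B_n}$ itself and then argue $\E\langle(R_{12}-q_n)^2\rangle \to 0$ along subsequences via a small Ising-style perturbation in the spirit of the scalar side information in~\eqref{eq:hamiltonian_scalar}. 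Combined with the lower bound, either approach establishes
\[
\lim_n f_{B_n}(\delta,\eta) \;=\; \sup_{q \ge 0}\Big\{\fsc(\delta,\eta q) - \frac{\eta q^2}{4}\Big\}.
\]
Since the right-hand side is expressed entirely in terms of $\fsc(\delta,\eta q)$, which by Proposition~\ref{prop:limit_Bn_scalar} is independent of the van Hove sequence, so is the limit $\phi(\delta,\eta)$, completing the proof.
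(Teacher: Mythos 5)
Your lower bound coincides with the paper's: the same Guerra interpolation between the GOE term at SNR $t\eta$ and a decoupled scalar channel at SNR $(1-t)\eta q$, the same use of the Nishimori identity ($R_{1,2}\stackrel{d}{=}R_{1,0}$) to reduce the derivative to $\frac{\eta}{4}\E\langle(R_{1,2}-q)^2\rangle_t-\frac{\eta q^2}{4}$, and the same conclusion after discarding the nonnegative variance term and invoking Proposition~\ref{prop:limit_Bn_scalar}. (Minor point: the derivative identity is exact here; no $o_n(1)$ is needed.)

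Where you genuinely diverge is the upper bound, and there you have named a strategy rather than executed one. The paper does not use adaptive interpolation. It restricts the interpolating measure to configurations with fixed magnetization $R_{1,0}(B_n)=m$, which kills the problematic $\E\langle(R_{1,0}-m)^2\rangle$ term in the derivative and yields $\varphi(1;m)\le\inf_q\{\varphi(0;m)-\frac{\eta m^2}{2}+\frac{\eta q^2}{4}\}$; it then controls $\E\log\sum_m Z_n(m)$ by $\max_m\E\log Z_n(m)+\bigo(\sqrt{|B_n|}\log|B_n|)$ via a log-Sobolev/Herbst concentration argument that must handle \emph{both} the Gaussian and the Rademacher (lattice) disorder, and finally uses the evenness of $\widetilde{\fsc}(\delta,s,r)$ in $s$ to match the resulting $\sup_m\inf_q$ expression with the lower bound. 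Your adaptive-interpolation (or saddle-point-plus-perturbation) route is a legitimate alternative in principle, but the step you defer to it is exactly the hard part: the remainder along the adaptive path is the overlap variance, and making it vanish requires overlap concentration under the perturbed interpolating measure, which in turn rests on concentration of the free energy with respect to the mixed Gaussian/Rademacher disorder --- the very input the paper has to construct from scratch via the entropy method (the Gaussian log-Sobolev inequality alone does not cover the $Y^{\delta}_{uv}$ variables). A second point to verify is that the decoupled potential $\fsc_{B_n}(\delta,\cdot)$ is not explicit, only known to converge uniformly on compacts and to be convex, so the regularity needed to run the adaptive ODE and the Liouville-type argument over the perturbation parameters must be checked against what Proposition~\ref{prop:limit_Bn_scalar} actually provides. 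As written, the upper bound is therefore an appeal to external machinery whose applicability in this mixed-disorder, non-explicit-potential setting is precisely what a complete proof would have to establish; the paper's restricted-overlap route sidesteps overlap concentration along the path at the cost of introducing the two-parameter function $\widetilde{\fsc}$ and its evenness in the magnetization variable.
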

Similarly to the scalar side information case, $\frac{\rmd}{\rmd \eta}f_{B_n}$ converges to $\frac{\rmd}{\rmd \eta}\phi$ everywhere except on countably many points. On the one hand, the derivative of $f_{B_n}$ is
\begin{equation}\label{eq:derivative_f_Bn_eta}
\frac{\rmd}{\rmd \eta}f_{B_n}=\frac{1}{4|B_n|^2}\sum_{x,y\in B_n} \E\Big[\E\big[\theta_x \theta_y| Y^{\delta}_{B_n}, Y^{\eta}_{B_n}\big]^2\Big] = \frac{1}{4}\varphi^{\se}_{B_n}.
\end{equation}
On the other hand, the derivative of $\phi$ can be linked to the maximizer in the variational formula~\eqref{eq:var_formula_Bn}. This proves Proposition~\ref{prop:conv_pair_corr} and provides a another characterization of $q_{\star}$: 
\begin{lemma}\label{lem:derivative_phi}
For all $\delta\in[0,1]$ and all $\eta$ where $\phi(\delta,\cdot)$ is differentiable, the above maximization problem has a unique maximizer $q_{\star} \in [0,1]$, and $\frac{\rmd}{\rmd \eta}\phi(\delta,\lambda) = q_{\star}^2/4$. Therefore, for all $\delta$ and all except countably many $\eta$, we obtain the relation 
$\varphi^{\se}_{B_n}  \to q_{\star}^2$ as $n \to \infty$.
\end{lemma}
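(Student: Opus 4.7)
The plan is a standard envelope argument after a convenient reparametrization of~\eqref{eq:var_formula_Bn}. Set $\mu = 1/(4\eta)$ and $\lambda = \eta q$; then~\eqref{eq:var_formula_Bn} becomes
\[
\phi(\delta,\eta) \;=\; h\big(1/(4\eta)\big), \qquad h(\mu) := \sup_{\lambda \ge 0}\big\{\fsc(\delta,\lambda) - \mu\lambda^2\big\}.
\]
Because $h$ is a pointwise supremum of functions \emph{affine} in $\mu$ (with slope $-\lambda^2$), it is convex in $\mu$ and hence differentiable off a countable set. Moreover, since $\fsc'(\delta,\cdot) \le 1/2$ (a consequence of~\eqref{eq:overlap_scalar} and the bounded-overlap inequality $\E\big[\E[\theta_x\mid\cdot]^2\big] \le 1$), we have $\fsc(\delta,\lambda) \le \fsc(\delta,0) + \lambda/2$, so the objective tends to $-\infty$ as $\lambda \to \infty$ and the set of maximizers $\Lambda_\star(\mu)$ is a non-empty compact subset of $[0,\infty)$.

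At any differentiability point $\mu$ of $h$, and for every $\lambda_\star \in \Lambda_\star(\mu)$, the convex function $\mu' \mapsto h(\mu') - \big(\fsc(\delta,\lambda_\star) - \mu'\lambda_\star^2\big)$ is non-negative and vanishes at $\mu$, so its derivative there vanishes, yielding $h'(\mu) = -\lambda_\star^2$. In particular $\lambda_\star^2$ is uniquely determined by $h'(\mu)$, and since $\lambda_\star \ge 0$ the maximizer is unique; the first-order condition $2\mu\lambda_\star \in \partial\fsc(\delta,\lambda_\star) \subseteq [0,1/2]$ then yields $q_\star := \lambda_\star/\eta \le 1$. Since $\eta \mapsto 1/(4\eta)$ is smooth, the chain rule gives
\[
\tfrac{\rmd}{\rmd\eta}\phi(\delta,\eta) \;=\; -\tfrac{1}{4\eta^2}\, h'\big(1/(4\eta)\big) \;=\; \tfrac{\lambda_\star^2}{4\eta^2} \;=\; \tfrac{q_\star^2}{4}
\]
at every point where $\phi(\delta,\cdot)$ is differentiable, which is the claimed formula.

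For the identification of $\lim_n \varphi^{\se}_{B_n}$, each $f_{B_n}(\delta,\cdot)$ is convex in $\eta$ (a standard Gaussian integration-by-parts computation shows that its second derivative equals a non-negative variance) and converges pointwise to $\phi(\delta,\cdot)$ by Proposition~\ref{prop:limit_Bn_GOE}; pointwise convergence of convex functions forces convergence of derivatives at every differentiability point of the limit. Combined with~\eqref{eq:derivative_f_Bn_eta}, this gives $\varphi^{\se}_{B_n} \to q_\star^2$ for all $\delta$ and all but countably many $\eta$, which is the content of Proposition~\ref{prop:conv_pair_corr}. The main subtlety I expect is purely notational: the temptation to differentiate $F(q,\eta) = \fsc(\delta,\eta q) - \eta q^2/4$ directly in $\eta$ would require differentiability of $\fsc$ at the potentially $\eta$-dependent point $\eta q_\star(\eta)$, over which one has no direct control; the affine-in-$\mu$ reparametrization sidesteps this issue entirely.
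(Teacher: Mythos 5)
Your proof is correct and follows essentially the same route as the paper: reparametrize the variational formula so the objective is affine (or smooth) in the temperature-like parameter, apply the envelope theorem at differentiability points to identify the derivative with $-\lambda_\star^2$ and hence deduce uniqueness of the maximizer, and then pass derivatives of the convex finite-volume free energies to the limit via~\eqref{eq:derivative_f_Bn_eta}. The only difference is cosmetic — you substitute $\mu = 1/(4\eta)$ to make the objective literally affine, whereas the paper applies the envelope theorem directly in the variable $\eta$ — and your version is, if anything, slightly more careful about why the supremum is attained and why the maximizer is unique.
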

\begin{proof}
This argument appears in~\cite{lelarge2017fundamental} but we reproduce it here for the sake of completeness. First, it is easy to see that the supremum in~\eqref{eq:var_formula_Bn} is achieved. Indeed $\fsc$ being convex, it is also continuous, and one has the elementary bound $\fsc(\delta,\lambda) \le 2\beta d +\lambda$. Therefore, large values of $q$ cannot achieve the supremum. Next, we reparametrize the variational formula as $\phi(\delta,\eta) = \sup_{\bar{q} \ge 0}\{\fsc(\delta, \bar{q}) - \frac{\bar{q}^2}{4\eta}\}$ with $\bar{q} = \eta q$. 
The envelope theorem implies that on points of differentiability of $\phi$, we have $\frac{\rmd }{\rmd \eta}\phi = \bar{q}^2/4\eta^2$ where $\bar{q}$ is any point achieving the maximum. Therefore the maximum is achieved at a unique point $q_{\star}$ and we have $\frac{\rmd}{\rmd \eta}\phi(\delta,\lambda) = q_{\star}^2/4$. 
Moreover from~\eqref{eq:derivative_f_Bn_eta}, the fact that  $\frac{\rmd}{\rmd \eta}f_{B_n}$ converges to $\frac{\rmd}{\rmd \eta}\phi$ whenever the derivative is defined implies the last claim $\varphi^{\se}_{B_n}  \to q_{\star}^2$. 
\end{proof}

\subsection{One-block and two-block posteriors}
\label{sec:one_two_block}
In this subsection we analyze the \emph{one-block} and \emph{two-block} posteriors: these are the posterior measures of $(\theta_x)_{x\in B}$ and $(\theta_x)_{x\in B \cup B'}$ for $B \sim B'$ respectively, where the blocks and the associated side information were constructed in Section~\ref{sec:algorithm}. Since the strength of the GOE side information is inhomogeneous within a given block, the posteriors and their free energies are not special cases of the setting discussed above. Nevertheless the results and arguments will have a similar flavor.       

The one-block posterior is given by
\begin{equation}\label{eq:gibbs_one_block}
\P\Big(\big(\theta_x\big)_{x\in B} \big| Y^{\delta}_B , Y^{\eta}_B\Big) = \frac{1}{Z_B}  \omega^{\sSK}_B(\theta) \cdot e^{H_{B}^{\delta}(\theta)},
\end{equation}
where as before, $H^{\delta}_B$ is the Hamiltonian encoding the lattice interaction on $B$, and  
\begin{equation}\label{eq:boltzmann_one_block}
\omega^{\sSK}_B(\theta) :=  \prod_{B' \sim B} e^{H^{\eta}_{B'}(\theta)}.
\end{equation}
is the Boltzmann weight encoding the heterogeneous GOE side information~\eqref{eq:processed_side_info}:
For $B' = B+\scL a$, $a \in \{\pm e_1,\cdots,\pm e_d\}$, and for $\theta \in \{\pm 1\}^B$,
\begin{align}\label{eq:hamiltonian_goe_inhomogeneous}
H^{\eta}_{B'}(\theta) &:= \sum_{u,v\in B} \Big(\sqrt{\frac{t \eta}{|B|}} Y_{uv}^{\eta,(a)}\theta_{u}\theta_{v} - \frac{t \eta}{2|B|}\Big) \\
&~+\sum_{u,v\in B \cap B'} \Big(\sqrt{\frac{(1-t)\eta}{|B\cap B'|}} Y_{uv}^{\eta,(a,\cap)}\theta_{u}\theta_{v} - \frac{(1-t)\eta}{2|B\cap B'|}\Big) \nonumber\\
&~+\sum_{u,v\in B \setminus B'} \Big(\sqrt{\frac{(1-t)\eta}{|B\setminus B'|}} Y_{uv}^{\eta,(a,\setminus)}\theta_{u}\theta_{v} - \frac{(1-t)\eta}{2|B\setminus B'|}\Big).\nonumber
\end{align}
Similarly, for two adjacent blocks $B \sim B'$, the two-block posterior is
\begin{equation}\label{eq:gibbs_two_block}
\P\Big(\big(\theta_x\big)_{x\in B \cup B'} \big| \bigcup_{A \in \{B,B'\}} \{Y^{\delta}_A , Y^{\eta}_A \}\Big) = \frac{1}{Z_{B\cup B'}} \omega_{B \cup B'}^{\sSK}(\theta) \cdot e^{H_{B\cup B'}^{\delta}(\theta)},
\end{equation} 
where 
for $\theta \in \{\pm 1\}^{B \cup B'}$, denoting by $\theta_B$ and $\theta_{B'}$ the restrictions of $\theta$ to $B$ and $B'$ respectively, 
\begin{equation}\label{eq:boltzmann_two_block}
\omega_{B\cup B'}^{\sSK}(\theta) := \prod_{B'' \sim B} e^{H^{\eta}_{B''}(\theta_{B})} 
\cdot \prod_{B'' \sim B'} e^{H^{\eta}_{B''}(\theta_{B'})}.
\end{equation}
 We define the free energies of the above posteriors as 
 \begin{equation}\label{eq:free_energies}
  f_{B}(\delta,\eta,t) := \frac{1}{|B|} \E \log Z_{B},\quad \mbox{and} \quad f_{B\cup B'}(\delta,\eta,t) := \frac{1}{|B|} \E \log Z_{B \cup B'},
 \end{equation}
where $Z_B$ and $Z_{B \cup B'}$ are the normalizing constant of the one-block and two-block posteriors, respectively. 
Let $\alpha := \underset{\scL \to \infty}{\lim} \frac{|B \cap B'|}{|B|}=\frac{1}{3^d+d}$.

\begin{proposition}\label{prop:limits_phi1_phi2}
Under Assumption~\ref{assump:lipschitz}, there exists $\eta_0 = \eta_0(d,\delta)>0$ such that if $\eta \in \R_+ \setminus \mathcal{D}$ and $\eta\le \eta_0$ then the following holds. For all $B, B' \in \bbB$, with $B \sim B'$, and all $\delta> \delta_c$, and $t\in [0,1]$, the free energies $f_{B}$ and $f_{B \cup B'}$ converge to limits $\phi_1$ and $\phi_2$, respectively, as $\scL\to \infty$. Moreover, $\phi_1$ and $\phi_2$ admit the following variational representations  
\begin{align}
\phi_1 &= \sup_{q \ge 0} \Big\{\fsc(\delta;2 d \eta q) - \frac{d}{2} \eta q^2\Big\} = \phi(\delta,2d\eta),\label{eq:var_1}\\
\phi_2 &= \sup_{q \ge 0} \Big\{2(1-\alpha)\fsc(\delta;2 d \eta q) +\alpha \fsc(\delta;4 d \eta q) - d \eta q^2\Big\}.\label{eq:var_2}
\end{align}
In particular $\phi_1$ and $\phi_2$ do not depend on the time parameter $t$ when $\eta\le \eta_0$.
\end{proposition}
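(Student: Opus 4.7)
The plan is to establish both variational formulas~\eqref{eq:var_1} and~\eqref{eq:var_2} by Gaussian interpolation techniques adapted from the theory of spiked mean-field models on the Nishimori line, and then observe that the resulting expressions do not involve $t$. The underlying principle is that the total per-vertex GOE ``signal'' on any region of $B$ (respectively $B\cup B'$) depends only on how many of the $2d$ neighboring blocks see that region, not on how the allocation parameter $t$ distributes the signal between the bulk and the edge pieces of $H^\eta_{B'}$.

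The first step is a Guerra--Toninelli interpolation giving one direction of the bound. Fix $q\ge 0$ and construct an interpolation $H(\tau)$, $\tau\in[0,1]$, that at $\tau=1$ reproduces the Hamiltonian of~\eqref{eq:gibbs_one_block} and at $\tau=0$ replaces all the heterogeneous GOE pieces on $B$ by a single scalar Gaussian side information of uniform SNR $2d\eta q$. The crucial arithmetic check is that the per-vertex SNR carried by $\sum_{B'\sim B}H^\eta_{B'}$ equals $2d\eta$ \emph{independently of $t$}: at a vertex of the core $B^\bullet$ the bulk pieces contribute $2dt\eta$ and the $2d$ complements contribute $2d(1-t)\eta$; at a vertex of a joint $J_a$ the bulk again gives $2dt\eta$, the single joint piece gives $(1-t)\eta$, and the $2d-1$ overlapping complements give $(2d-1)(1-t)\eta$; in both cases the total is $2d\eta$. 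Differentiating $\tau\mapsto f(\tau)$, applying Gaussian integration by parts and invoking the Nishimori identity turns the derivative into a non-negative sum of overlap squares $\sum_\text{piece}\tfrac{\text{SNR}_\text{piece}}{2}\E\langle(R^\text{piece}_{1,2}-q)^2\rangle$, up to boundary errors of order $|\partial B|/|B|\to 0$ as $\scL\to\infty$. Integrating in $\tau$ yields $f_B(\delta,\eta,t)\ge \fsc_B(\delta,2d\eta q)-\tfrac{d}{2}\eta q^2+o(1)$ for every $q$; combining with $\fsc_B\to\fsc$ from Proposition~\ref{prop:limit_Bn_scalar} and optimizing over $q$ gives the $\liminf$ half of~\eqref{eq:var_1}.

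The second step is the matching upper bound, which I would obtain by an Aizenman--Sims--Starr cavity computation or, equivalently, a Barbier--Macris adaptive interpolation along a $\tau$-dependent path $q(\tau)$ chosen so that the signed remainder left by Nishimori after completing the $(R_{1,2}-q)^2$ square vanishes identically. Assumption~\ref{assump:lipschitz} enters precisely here: Lipschitz continuity of $\lambda\mapsto\partial_\lambda\fsc$ ensures that the cavity fixed-point equation obtained by differentiating~\eqref{eq:var_1} has a unique stable solution and permits Gronwall-type control of the adaptive trajectory. The two-block formula~\eqref{eq:var_2} is handled identically, the only new feature being that vertices of $B\cap B'$ lie in the neighbor families of \emph{both} $B$ and $B'$, doubling their per-vertex SNR to $4d\eta$; the Guerra bookkeeping then produces exactly the convex combination $2(1-\alpha)\fsc(\delta,2d\eta q)+\alpha\fsc(\delta,4d\eta q)$ with quadratic penalty $-d\eta q^2$.

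The main obstacle will be the upper-bound direction. The heterogeneity of the GOE weights across bulk, joints and complements means the standard single-species ASS argument does not transfer verbatim, and the cavity computation naturally couples overlaps on several different subsets of $B$ simultaneously. The restriction $\eta\le \eta_0$ and Assumption~\ref{assump:lipschitz} are used precisely to put the system in a replica-symmetric regime where these several overlaps concentrate on a common value $q$, so that the multi-subset cavity formula collapses to the scalar optimization on the right-hand side of~\eqref{eq:var_1}--\eqref{eq:var_2}. Once both variational formulas are established, their independence from $t$ is immediate by inspection, and the final conclusion of the proposition follows.
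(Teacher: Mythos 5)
Your lower bound is sound: restricting the Guerra interpolation to a single replacement parameter $q$ for every spatial piece is exactly the diagonal of the paper's multi-parameter interpolation, and your arithmetic check that the per-vertex SNR sums to $2d\eta$ (resp.\ $4d\eta$ on $B\cap B'$) independently of $t$ is correct; this yields $\liminf_{\scL\to\infty} f_B \ge \sup_q\{\fsc(\delta,2d\eta q)-\tfrac{d}{2}\eta q^2\}$ as claimed. The genuine gap is in the upper bound. The constancy of the per-vertex SNR does not make the model effectively homogeneous: each piece of $H^\eta_{B'}$ is a spiked GOE on a \emph{different} subset ($B$, $B\cap B'$, $B\setminus B'$), so any cavity or interpolation upper bound unavoidably produces a variational formula over a vector of $2d+1$ (resp.\ $4d+1$) overlap parameters, one per region, and the whole difficulty is to show this collapses to the diagonal. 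You assert that Assumption~\ref{assump:lipschitz} and $\eta\le\eta_0$ ``put the system in a replica-symmetric regime where these several overlaps concentrate on a common value,'' but that is precisely the statement requiring proof, and your proposed mechanisms (uniqueness of a scalar cavity fixed point, Gronwall control of an adaptive path) address a different question — uniqueness of the maximizer of the already-collapsed scalar formula — not the collapse of the multi-dimensional stationarity system onto the diagonal.

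For comparison, the paper's upper bound is the fixed-overlap Guerra bound (constraining $R_{1,0}$ on each region to prescribed values $\underline{m}$, with concentration over the exponentially many constraints supplied by a log-Sobolev/Herbst argument), giving a $\sup_{\underline{m}}\inf_{\underline{q}}$ formula $\tilde\phi_1$ that does not obviously match the multi-parameter lower bound $\phi_1$ — indeed the paper notes one cannot even argue the optimal $\underline{m}$ has nonnegative coordinates. The collapse is then obtained in two concrete steps: (i) the first-order stationarity conditions take the form $J\underline{x}=0$ for an explicit $(2d+1)\times(2d+1)$ matrix $J$ depending on $t,\alpha,d$, which is shown to be invertible (Lemma~\ref{lem:invertible}), forcing ${\fsc}'(\delta;r^{\bullet})=q^{\bullet}/2$ and ${\fsc}'(\delta;r_{B\cap B'})=q_{B\cap B'}/2$ for each $B'$; (ii) taking pairwise differences of these equations and using the Lipschitz bound on ${\fsc}'$ from Assumption~\ref{assump:lipschitz} yields a contraction $|q^{\bullet}-q_{B\cap B'}|\le \tfrac{2L\eta}{1-\alpha}(1-2d\alpha)\,|q^{\bullet}-q_{B\cap B'}|$, which forces all coordinates equal once $\eta\le\eta_0$ (Lemma~\ref{lem:symmetry}); the same applies to the $\underline{m}$ variables via item A2. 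Only then do $\phi_1$ and $\tilde\phi_1$ both reduce to the scalar formula and coincide. Your proposal would need to supply an argument of this kind (or an equivalent overlap-concentration statement across all regions simultaneously) before the upper bound, and hence the proposition, is established.
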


Independence with respect to $t$ implies a fundamental \emph{locking} property of the overlaps: under either the one-block or the two-block posterior measures, overlaps of two independent samples on different subsets of vertices of the whole system are almost equal. 
To state this property, we adopt the following the notation.  For $A \in \{B , B \cup B'\}$ define
\[ V_{A}(B;B') := \alpha  \E \left\langle \big(R_{1,2}(B \cap B') - R_{1,2}(B)\big)^2\right\rangle_{A}
+(1-\alpha) \E \left\langle \big(R_{1,2}( B \setminus B') - R_{1,2}(B)\big)^2\right\rangle_{A},\]
 where the Gibbs average $\langle \cdot \rangle_A$ is a short-hand for the average w.r.t\ the one-block posterior in the case $A = B$ or the two-block posterior in the case $A = B \cup B'$, and $R_{1,2}(A) = \frac{1}{|A|}\sum_{x \in A} \theta^1_x \theta^2_x$ where $\theta^1,\theta^2$ is a pair of independently drawn samples from the one/two-block posterior measure. 
\begin{proposition}\label{prop:locking} 
Under Assumption~\ref{assump:lipschitz}, there exists $\eta_0 = \eta_0(d,\delta)>0$ such that for $\eta\le \eta_0$ almost all $t \in [0,1]$, we have $V_{B}(B,B') \to 0$ for all $B' \sim B$, $V_{B\cup B'}(B,B'')\to 0$ for all $B'' \sim B$, and $V_{B\cup B'}(B',B'')\to 0$ for all $B'' \sim B'$ as $\scL \to \infty$.
\end{proposition}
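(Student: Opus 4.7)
The plan is to deduce this directly from Proposition~\ref{prop:limits_phi1_phi2} via the standard convexity-plus-interpolation argument sketched in Section~\ref{sec:overview_proof}. The first step is to observe that, for each $\scL$, both $t \mapsto f_{B}(\delta,\eta,t)$ and $t \mapsto f_{B \cup B'}(\delta,\eta,t)$ are convex on $[0,1]$: the Gibbs weight $\omega^{\sSK}$ is a product of independent spiked--GOE contributions with SNR parameters $t\eta/|B|$, $(1-t)\eta/|B\cap B''|$ and $(1-t)\eta/|B\setminus B''|$, all affine in $t$, and the free energy of a Bayesian spiked-matrix ensemble on the Nishimori line is jointly convex in its SNR parameters; composition with an affine map preserves convexity.

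The second step is to compute the $t$-derivatives via Gaussian integration by parts followed by the Nishimori identity, exactly as in the model calculation of Section~\ref{sec:overview_proof}. Summing the three contributions coming from each neighbor $B'' \sim B$ and using the decomposition $R_{1,2}(B) = \alpha_{\scL} R_{1,2}(B\cap B'') + (1-\alpha_{\scL}) R_{1,2}(B\setminus B'')$ with $\alpha_{\scL} = |B\cap B''|/|B|$, together with the elementary identity $\alpha x^2 + (1-\alpha)y^2 - (\alpha x + (1-\alpha)y)^2 = \alpha(1-\alpha)(x-y)^2$, one arrives at
\[
\frac{\rmd}{\rmd t} f_B(\delta,\eta,t) \;=\; -\frac{\eta}{2} \sum_{B'' \sim B} V_B(B; B''),
\]
\[
\frac{\rmd}{\rmd t} f_{B \cup B'}(\delta,\eta,t) \;=\; -\frac{\eta}{2}\Big(\sum_{B'' \sim B} V_{B \cup B'}(B; B'') \,+\, \sum_{B'' \sim B'} V_{B \cup B'}(B'; B'')\Big),
\]
where each $V_A(\cdot;\cdot) \ge 0$ admits the equivalent form $\alpha_{\scL}(1-\alpha_{\scL})\,\E\langle (R_{1,2}(B\cap B'') - R_{1,2}(B\setminus B''))^2\rangle_A$.

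The third and final step invokes the elementary fact that if a sequence of convex functions $g_{\scL}\colon [0,1]\to\R$ converges pointwise to a function $g$ that is differentiable at $t$, then the left and right derivatives of $g_{\scL}$ at $t$ converge to $g'(t)$. Under Assumption~\ref{assump:lipschitz} and for $\eta \le \eta_0$, Proposition~\ref{prop:limits_phi1_phi2} guarantees pointwise convergence of $f_B(\delta,\eta,\cdot)$ and $f_{B\cup B'}(\delta,\eta,\cdot)$ to constants $\phi_1(\delta,\eta)$ and $\phi_2(\delta,\eta)$. Being constants, these limits are differentiable everywhere with vanishing derivative, so $(\rmd/\rmd t)f_B \to 0$ and $(\rmd/\rmd t)f_{B\cup B'} \to 0$ as $\scL \to \infty$ at every $t \in (0,1)$. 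Because every $V_A \ge 0$, the sums in the previous display tending to zero force each summand to tend to zero, which yields the three limits asserted in the proposition for almost every (in fact every) $t \in [0,1]$.

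The principal difficulty is not in this proposition but is already packaged inside Proposition~\ref{prop:limits_phi1_phi2}: establishing that the limits $\phi_1$ and $\phi_2$ are independent of $t$ is precisely where Assumption~\ref{assump:lipschitz} enters. Granted that input, Proposition~\ref{prop:locking} reduces to the routine convexity-plus-integration-by-parts calculation above.
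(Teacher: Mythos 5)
Your proposal is correct and follows essentially the same route as the paper: convexity of $f_B$ and $f_{B\cup B'}$ in $t$, Gaussian integration by parts to identify $\tfrac{\rmd}{\rmd t}f_A$ with $-\tfrac{\eta}{2}$ times a sum of nonnegative $V_A$ terms, and convergence of derivatives of convex functions to the (here $t$-independent, hence zero) derivative of the limit supplied by Proposition~\ref{prop:limits_phi1_phi2}. Your added observations (the explicit $\alpha(1-\alpha)(x-y)^2$ rewriting of $V_A$ and the remark that constancy of the limit upgrades the a.e.\ convergence of derivatives to every $t\in(0,1)$) are correct refinements of the same argument.
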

\begin{proof}
Convexity of $f_{B}$ and $f_{B \cup B'}$ w.r.t.\ $t$ implies that their derivatives converge to the derivatives of the limits $\phi_1$ and $\phi_2$ everywhere except on countably many points.  
A short computation of the finite-volume derivatives reveals 
 \begin{align*}
 \frac{\rmd }{\rmd t}f_{B} &= -\frac{\eta}{2} \sum_{B' \sim B} V_{B}(B;B'), \qquad \mbox{and}\\
 \frac{\rmd }{\rmd t}f_{B\cup B'} &= -\frac{\eta}{2} \sum_{B'' \sim B} V_{B \cup B'}(B,B'')  -\frac{\eta}{2}\sum_{B'' \sim B'} V_{B \cup B'}(B';B''). 
\end{align*}  
Since by Proposition~\ref{prop:limits_phi1_phi2}, the limits do not depend on $t$ for $\eta \le \eta_0$, the above right-hand sides converge to zero.
\end{proof}
Now that overlaps of sub-blocks of a given block are locked together, they are all asymptotically equal to the overlap on the entire block ($B$ in the case of the one-block posterior and $B \cup B'$ in the case of the two-block posterior). The limits of these global overlaps are again available by differentiating w.r.t.\ $\eta$. Similarly to $V_A(B;B')$, we define
 \[W_A(B;B') :=  t  \E \left\langle R_{1,2}(B)^2\right\rangle_{A} + 
 (1-t) \Big(\alpha \E \left\langle R_{1,2}(B \cap B')^2\right\rangle_{A} + (1-\alpha) \E \left\langle R_{1,2}(B \setminus B')^2\right\rangle_{A}\Big).\] 
 Thus we have
  \begin{align*}
 \frac{\rmd }{\rmd \eta}f_{B} &= \frac{1}{4} \sum_{B' \sim B}W_B(B;B'), \qquad \mbox{and}\\
 \frac{\rmd }{\rmd \eta}f_{B\cup B'} &=  \frac{1}{4} \sum_{B'' \sim B} W_{B\cup B'}(B;B'') + \frac{1}{4}  \sum_{B'' \sim B'} W_{B\cup B'}(B';B'').
\end{align*}
We deduce from Proposition~\ref{prop:locking} that 
\[\frac{\rmd }{\rmd \eta}f_{B} = \frac{d}{2}  \E \left\langle R_{1,2}(B)^2\right\rangle_{B} + o_{\scL}(1), ~~ \mbox{and} ~~ \frac{\rmd }{\rmd \eta}f_{B\cup B'} = d \E \left\langle R_{1,2}(B \cup B')^2\right\rangle_{B\cup B'} + o_{\scL}(1),\] 
where $o_{\scL}(1) \to 0$ for almost all $t$ and all $\eta\le\eta_0$. Hence the following
\begin{corollary}\label{cor:overlap_conv}  
Under Assumption~\ref{assump:lipschitz}, let $q^{\bullet}_1$ and $q^{\bullet}_2$ be the unique maximizers in the variational problems~\eqref{eq:var_1} and~\eqref{eq:var_2} respectively. For almost all $t \in [0,1]$ and all except countably many $\eta \in [0,\eta_0]$, we have $\E \left\langle R_{1,2}(B)^2\right\rangle_{B} \to {q^{\bullet}_1}^2$ and $ \E \left\langle R_{1,2}(B \cup B')^2\right\rangle_{B\cup B'} \to {q^{\bullet}_2}^2$ as $\scL \to \infty$.
\end{corollary}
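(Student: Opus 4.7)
The plan is to combine the two derivative identities displayed immediately before the corollary with an envelope-theorem computation of $\frac{\rmd}{\rmd\eta}\phi_1$ and $\frac{\rmd}{\rmd\eta}\phi_2$. Starting from Proposition~\ref{prop:locking} together with the formulas already given for $\frac{\rmd}{\rmd\eta}f_B$ and $\frac{\rmd}{\rmd\eta}f_{B\cup B'}$, for almost every $t \in [0,1]$ and every $\eta \le \eta_0$ one has
\[\frac{\rmd}{\rmd\eta}f_B \;=\; \tfrac{d}{2}\,\E\langle R_{1,2}(B)^2\rangle_B + o_{\scL}(1), \qquad \frac{\rmd}{\rmd\eta}f_{B\cup B'} \;=\; d\,\E\langle R_{1,2}(B\cup B')^2\rangle_{B\cup B'} + o_{\scL}(1),\]
so it suffices to identify the $\scL \to \infty$ limits of the two left-hand sides.

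Next I would invoke convexity. The maps $\eta \mapsto f_B(\delta,\eta,t)$ and $\eta \mapsto f_{B\cup B'}(\delta,\eta,t)$ are convex in $\eta$ (a one-line Gaussian integration-by-parts gives $\frac{\rmd^2}{\rmd\eta^2}f_B \ge 0$ as an expected squared fluctuation), so the limits $\phi_1,\phi_2$ of Proposition~\ref{prop:limits_phi1_phi2} are also convex, hence differentiable in $\eta$ outside a countable set $\mathcal{D}_\phi$. Pointwise convergence of convex functions forces convergence of their derivatives at every point of differentiability of the limit; thus for $\eta \in [0,\eta_0] \setminus \mathcal{D}_\phi$, the two finite-volume derivatives converge to $\frac{\rmd}{\rmd\eta}\phi_1$ and $\frac{\rmd}{\rmd\eta}\phi_2$ respectively.

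The last step is to compute $\frac{\rmd}{\rmd\eta}\phi_i$ directly from the variational formulas~\eqref{eq:var_1}--\eqref{eq:var_2}, reusing the reparametrization trick of Lemma~\ref{lem:derivative_phi}. Setting $\bar q = \eta q$ rewrites
\[\phi_1 \;=\; \sup_{\bar q \ge 0}\Big\{\fsc(\delta,2d\bar q) - \tfrac{d\bar q^2}{2\eta}\Big\}, \qquad \phi_2 \;=\; \sup_{\bar q \ge 0}\Big\{2(1-\alpha)\fsc(\delta,2d\bar q) + \alpha\,\fsc(\delta,4d\bar q) - \tfrac{d\bar q^2}{\eta}\Big\}.\]
Both suprema are attained (the bound $\fsc(\delta,\lambda) \le 2\beta d + \lambda$ rules out $\bar q \to \infty$), and at any $\eta \notin \mathcal{D}_\phi$ the standard convex-envelope argument shows that the maximizer is unique: two distinct maximizers would supply two distinct subgradients to $\phi_i$ in the variable $1/\eta$, contradicting differentiability. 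The envelope theorem then yields $\frac{\rmd}{\rmd\eta}\phi_1 = \tfrac{d(\bar q^\star_1)^2}{2\eta^2} = \tfrac{d}{2}(q^{\bullet}_1)^2$ and $\frac{\rmd}{\rmd\eta}\phi_2 = \tfrac{d(\bar q^\star_2)^2}{\eta^2} = d(q^{\bullet}_2)^2$, which matched with the first display gives exactly the two advertised limits.

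The substantive work has already been done by Propositions~\ref{prop:limits_phi1_phi2} and~\ref{prop:locking}: the locking identity is precisely what collapses the inhomogeneous combinations $W_B(B;B')$ and $W_{B\cup B'}(\cdot;\cdot)$ appearing in $\frac{\rmd}{\rmd\eta}f_B,\ \frac{\rmd}{\rmd\eta}f_{B\cup B'}$ into the single global overlaps $R_{1,2}(B)$ and $R_{1,2}(B\cup B')$. The only mild obstacle that remains here is uniqueness of the variational maximizer at generic $\eta$, but this is a routine consequence of differentiability of $\phi_i$; the rest is bookkeeping of the two countable exceptional sets (one for differentiability of $\phi_i$ in $\eta$, one for the almost-every $t$ issued by Proposition~\ref{prop:locking}).
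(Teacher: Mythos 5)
Your proposal is correct and follows essentially the same route as the paper: the paper's proof likewise applies the reparametrization/envelope argument of Lemma~\ref{lem:derivative_phi} to the variational formulas~\eqref{eq:var_1} and~\eqref{eq:var_2} to get $\frac{\rmd}{\rmd\eta}\phi_1 = d{q^{\bullet}_1}^2/2$ and $\frac{\rmd}{\rmd\eta}\phi_2 = d{q^{\bullet}_2}^2$, then combines convergence of the finite-volume derivatives (by convexity in $\eta$) with the locking-derived identities $\frac{\rmd}{\rmd\eta}f_B = \frac{d}{2}\E\langle R_{1,2}(B)^2\rangle_B + o_{\scL}(1)$ and its two-block analogue. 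The extra detail you supply on uniqueness of the maximizer and the bookkeeping of exceptional sets is consistent with what the paper leaves implicit.
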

\begin{proof}
From~\eqref{eq:var_1} and Lemma~\ref{lem:derivative_phi} we have $ \frac{\rmd }{\rmd \eta} \phi_1 = d {q^{\bullet}_1}^2/2$.  
Second, note that the same argument used to obtain Lemma~\ref{lem:derivative_phi} applies to the formula~\eqref{eq:var_2}, hence the existence and uniqueness of the maximizer $q^{\bullet}_2$, and $\frac{\rmd}{\rmd \lambda}\phi_2= d{q^{\bullet}_2}^2$. Now, given formulas above for the derivatives of $f_{B}$ and $f_{B\cup B'}$,  and since these derivatives converge to $\frac{\rmd }{\rmd \eta} \phi_1$ and $\frac{\rmd }{\rmd \eta} \phi_2$ respectively as $\scL \to \infty$ for all except countably many $\eta>0$, we obtain the desired result.
\end{proof}
Next, we show that the maximizers $q_{\star}$, $q^{\bullet}_1$ and $q^{\bullet}_2$ of the variational formulas~\eqref{eq:var_formula_Bn}, \eqref{eq:var_1} and \eqref{eq:var_2} respectively are close together when $\eta$ is small.  
\begin{lemma} \label{lem:maximizers_are_close}
Under Assumption~\ref{assump:lipschitz}, and for $\delta>\delta_c$, there exist a constant $c_0 = c_0(d,L,\eta_0)>0$ such that for all except countably many $\eta \in [0,\eta_0]$, $|q^{\bullet}_2 - q^{\bullet}_1| \vee |q^{\bullet}_1-q_{\star}| \le c_0\eta$. 
\end{lemma}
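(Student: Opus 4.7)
The plan is to compare the three maximizers through their first-order optimality conditions and the Lipschitz regularity of $\psi(\lambda) := \frac{\rmd}{\rmd\lambda}\fsc(\delta,\lambda)$ granted by Assumption~\ref{assump:lipschitz}(A1). First I would differentiate each of~\eqref{eq:var_formula_Bn},~\eqref{eq:var_1},~\eqref{eq:var_2} in $q$. For every $\eta \in (0,\eta_0]$ outside of a countable exceptional set (where Lemma~\ref{lem:derivative_phi} and the analogous argument for~\eqref{eq:var_1},~\eqref{eq:var_2} give differentiability of the limiting free energies and uniqueness of the maximizers), the maximizers $q_\star,q^{\bullet}_1,q^{\bullet}_2 \in [0,1]$ satisfy the fixed-point relations
\[
q_\star = 2\psi(\eta q_\star),\qquad q^{\bullet}_1 = 2\psi(2d\eta\, q^{\bullet}_1),\qquad q^{\bullet}_2 = 2(1-\alpha)\psi(2d\eta\, q^{\bullet}_2) + 2\alpha\,\psi(4d\eta\, q^{\bullet}_2).
\]

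To bound $|q^{\bullet}_1 - q_\star|$, I would subtract the first two relations and telescope through the intermediate value $\psi(\eta q^{\bullet}_1)$:
\[
q^{\bullet}_1 - q_\star = 2\bigl[\psi(2d\eta q^{\bullet}_1) - \psi(\eta q^{\bullet}_1)\bigr] + 2\bigl[\psi(\eta q^{\bullet}_1) - \psi(\eta q_\star)\bigr].
\]
Applying the Lipschitz constant $L$ from A1, the first bracket is bounded by $L(2d-1)\eta\, q^{\bullet}_1 \le L(2d-1)\eta$ and the second by $L\eta\,|q^{\bullet}_1 - q_\star|$. Choosing $\eta_0$ so small that $2L\eta_0 \le \tfrac{1}{2}$ and rearranging gives $|q^{\bullet}_1 - q_\star| \le 4L(2d-1)\eta$.

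For $|q^{\bullet}_2 - q^{\bullet}_1|$, I would write $q^{\bullet}_1 = 2(1-\alpha)\psi(2d\eta q^{\bullet}_1) + 2\alpha\psi(2d\eta q^{\bullet}_1)$ and telescope through $\psi(2d\eta q^{\bullet}_2)$ to obtain
\[
q^{\bullet}_2 - q^{\bullet}_1 = 2\bigl[\psi(2d\eta q^{\bullet}_2) - \psi(2d\eta q^{\bullet}_1)\bigr] + 2\alpha\bigl[\psi(4d\eta q^{\bullet}_2) - \psi(2d\eta q^{\bullet}_2)\bigr].
\]
The Lipschitz bound now yields a right-hand side of at most $4dL\eta\,|q^{\bullet}_2 - q^{\bullet}_1| + 4\alpha dL\eta\, q^{\bullet}_2$. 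For $\eta_0$ such that $4dL\eta_0 \le \tfrac{1}{2}$, rearranging gives $|q^{\bullet}_2 - q^{\bullet}_1| \le 8\alpha dL\eta$. Setting $c_0 := \max\{4L(2d-1),\, 8\alpha dL\}$ concludes.

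The main technical nuisance is that Assumption~\ref{assump:lipschitz}(A1) provides Lipschitz control only on $\R_+\setminus\mathcal{D}$ for some countable set $\mathcal{D}$, so the estimates above must be applied at arguments that avoid $\mathcal{D}$. Since $\mathcal{D}$ is countable and the maps $\eta \mapsto c\eta q(\eta)$ for $c \in \{1,2d,4d\}$ and $q \in \{q_\star,q^{\bullet}_1,q^{\bullet}_2\}$ have countable preimages of $\mathcal{D}$, the inequalities hold at all but countably many $\eta \in [0,\eta_0]$, matching the qualifier in the statement. Equivalently, one can redefine $\psi$ on $\mathcal{D}$ by right-continuity to obtain a Lipschitz function on all of $\R_+$, which leaves the fixed-point relations unchanged at non-exceptional $\eta$.
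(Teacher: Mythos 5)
Your proposal is correct and follows essentially the same route as the paper: write the first-order stationarity conditions for the three variational problems, subtract them pairwise, and apply the Lipschitz bound on $\frac{\rmd}{\rmd\lambda}\fsc$ from Assumption~\ref{assump:lipschitz}(A1) to absorb the self-referential term for $\eta_0$ small. Your explicit telescoping and the handling of the countable exceptional set are, if anything, slightly more careful than the paper's write-up; the only point worth adding is a one-line justification (as the paper gives) that $\delta>\delta_c$ forces all three maximizers to be strictly positive, so the stationarity conditions hold as equalities rather than boundary inequalities.
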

\begin{proof}
Since $\delta>\delta_c$ all the maximizers are strictly positive and must satisfy first order optimality conditions:
\[{\fsc}'(\delta;\eta q_{\star}) = \frac{q_{\star}}{2}, ~~ {\fsc}'(\delta,2d\eta q^{\bullet}_1) = \frac{q^{\bullet}_1}{2}, ~~ (1-\alpha){\fsc}'(\delta,2d\eta q^{\bullet}_2) + \alpha {\fsc}'(\delta,4d\eta q^{\bullet}_2) = \frac{q^{\bullet}_2}{2}.\]   
We take the difference of the first two equations and use the assumed Lipschitz property of ${\fsc}'$ to obtain $|q^{\bullet}_1-q_{\star}| \le 2L \eta |2d q^{\bullet}_1-q_{\star}|$, or $|q^{\bullet}_1-q_{\star}| \le \frac{2L\eta}{1-2L\eta}(2d-1)q_{\star}$. And we use the trivial bound $q_{\star}\le 1$ (this is because ${\fsc}'(\delta,\lambda) \le 1$). Similarly, taking the difference of the last two equations we obtain $|q^{\bullet}_2-q^{\bullet}_1| \le \frac{4L\alpha\eta}{1-4L\eta}q^{\bullet}_2$.
\end{proof}
 Corollary~\ref{cor:overlap_conv} and Lemma~\ref{lem:maximizers_are_close}  show that the squared global overlaps $\E \left\langle R_{1,2}(B)^2\right\rangle_{B}$ and $\E \left\langle R_{1,2}(B\cup B')^2\right\rangle_{B\cup B'}$ converge to the same value $q_{\star \star}^2(\delta)$ as $\eta\to 0$.

\subsection{Decoupling bounds}
\label{sec:decoup}
It will be useful for technical reasons to consider a model where one receives side information from the scalar additive Gaussian noise channel~\eqref{eq:gaussian_channel} in addition to the GOE side information. Let us consider the one-block and two-block posteriors, similarly to~\eqref{eq:gibbs_one_block} and~\eqref{eq:gibbs_two_block}, with this additional scalar side information. These probabilities are respectively proportional to
\begin{align*}
\omega^{\sSK}_B(\theta) \cdot e^{H_{B}^{\delta}(\theta)+ H_{B}^{\lambda}(\theta)}, \mbox{ for } \theta \in B ~\mbox{ and }~ \omega^{\sSK}_{B\cup B'}(\theta) \cdot e^{H_{B \cup B'}^{\delta}(\theta)+H_{B \cup B'}^{\lambda}(\theta)}, \mbox{ for } \theta \in B \cup B',
\end{align*}   
where $\omega^{\sSK}_{B}$ is defined in~\eqref{eq:boltzmann_one_block} and $\omega^{\sSK}_{B\cup B'}$ in~\eqref{eq:boltzmann_two_block}, and $H_{B}^{\lambda}(\theta)$ and $H_{B \cup B'}^{\lambda}(\theta)$ are defined in~\eqref{eq:hamiltonian_scalar}. We state and prove a sequence of technical lemmas which will be extensively used in the rest of the paper. 
The starting point is the following extension of Proposition~\ref{prop:limits_phi1_phi2}:
\begin{proposition} \label{prop:additional_scalar}
Let $f_{B}(\delta,\eta,t,\lambda)$ and $f_{B\cup B'}(\delta,\eta,t,\lambda)$ be the free energies of the above posterior measures (defined similarly to~\eqref{eq:free_energies}). Then for all $\delta \in [0,1]$, $\eta \ge 0$, $t \in [0,1]$ and $\lambda \ge 0$, $f_{B}$ and $f_{B \cup B'}$ converge to $\phi_1$ and $\phi_2$, respectively, as $\scL\to \infty$.
Moreover, under Assumption~\ref{assump:lipschitz}, there exists $\eta_0 = \eta_0(d,\delta)>0$ such that if $\eta\le \eta_0$ then $\phi_1$ and $\phi_2$ admit the following variational representations  
\begin{align}
\phi_1 &= \sup_{q \ge 0} \Big\{\fsc(\delta;2 d \eta q+\lambda) - \frac{d}{2} \eta q^2\Big\},\label{eq:var_1_scalar}\\
\phi_2 &= \sup_{q \ge 0} \Big\{2(1-\alpha)\fsc(\delta;2 d \eta q+\lambda) +\alpha \fsc(\delta;4 d \eta q+\lambda) - d \eta q^2\Big\}.\label{eq:var_2_scalar}
\end{align}
\end{proposition}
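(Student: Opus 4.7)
The plan is to rerun the proof of Proposition~\ref{prop:limits_phi1_phi2} essentially verbatim, treating the scalar Hamiltonian $H^{\lambda}_{B}(\theta)$ (resp.\ $H^{\lambda}_{B \cup B'}(\theta)$) as an additive ``passive'' contribution to the Hamiltonian that is carried along unchanged through every step of the Guerra-type interpolation. Since $H^{\lambda}$ is a sum of independent single-site terms $\sqrt{\lambda}y_u\theta_u - \lambda/2$, it does not couple with the SK Boltzmann weight $\omega^{\sSK}$ under Gaussian integration by parts, and so it appears at every stage of the interpolation as a mere additive shift of the argument of $\fsc$. The convergence statement $f_{B} \to \phi_1$ and $f_{B \cup B'} \to \phi_2$ for all $\delta, \eta, t, \lambda$ follows from exactly the same Guerra upper bound plus a subadditivity / cavity lower bound used in Proposition~\ref{prop:limits_phi1_phi2}; only the identification of the limits with the variational formulas~\eqref{eq:var_1_scalar} and~\eqref{eq:var_2_scalar} requires $\eta \le \eta_0$ and Assumption~\ref{assump:lipschitz}.

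Concretely, for the one-block case I would introduce an interpolation parameter $s \in [0,1]$ that continuously replaces the GOE weight $\omega^{\sSK}_B(\theta)$ by independent scalar Gaussian observations of each spin $\theta_u$, $u \in B$, with a cavity SNR parameter $q \ge 0$; at $s=0$ the model reduces to the pure scalar channel with effective per-vertex SNR $2d\eta q + \lambda$, whose free energy converges to $\fsc(\delta, 2d\eta q + \lambda)$ by Proposition~\ref{prop:limit_Bn_scalar} (the $2d$ comes from the $2d$ terms $\omega^{\sSK}_B = \prod_{B' \sim B} e^{H^{\eta}_{B'}(\theta)}$ in~\eqref{eq:boltzmann_one_block}). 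Differentiating in $s$ and applying Gaussian integration by parts together with the Nishimori identity produces the quadratic ``penalty'' term $-\tfrac{d}{2}\eta q^2$, yielding the Guerra upper bound $\phi_1 \le \sup_{q \ge 0}\{\fsc(\delta, 2d\eta q + \lambda) - \tfrac{d}{2}\eta q^2\}$. The matching lower bound follows by running an adaptive interpolation (in the style of Lelarge--Miolane) where $q$ is chosen, at each scale $\scL$, to approximately solve the fixed-point condition $\tfrac{d}{d\lambda}\fsc(\delta, 2d\eta q + \lambda) = q/2$; under part A1 of Assumption~\ref{assump:lipschitz} this fixed-point equation has a unique solution for $\eta \le \eta_0$, mirroring Lemma~\ref{lem:maximizers_are_close}.

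The two-block case is handled by running the same interpolation independently on the two SK weights $\omega^{\sSK}_B$ and $\omega^{\sSK}_{B'}$. A vertex in $B \setminus B'$ appears only in $\omega^{\sSK}_B$ and so receives scalar SNR $2d\eta q$; by symmetry the same is true of $B' \setminus B$, which together (of asymptotic density $2(1-\alpha)$ relative to $|B|$) contribute $2(1-\alpha)\fsc(\delta, 2d\eta q + \lambda)$ at the decoupled endpoint. A vertex in the overlap $B \cap B'$ is hit by both weights and accumulates scalar SNR $4d\eta q$, contributing $\alpha\, \fsc(\delta, 4d\eta q + \lambda)$. Gathering the quadratic penalties gives the announced $-d\eta q^2$, and the sup over $q$ is again a unique fixed point under A1 for $\eta \le \eta_0$.

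The main obstacle is exactly the same as in Proposition~\ref{prop:limits_phi1_phi2}: closing the gap between the one-sided Guerra bound and the variational equality, which requires Lipschitz control of $\lambda \mapsto \tfrac{d}{d\lambda}\fsc(\delta,\lambda)$ (part A1) so that the adaptive interpolation contracts to a unique maximizer at small $\eta$. A secondary bookkeeping difficulty is the heterogeneous scaling of the GOE side information~\eqref{eq:hamiltonian_goe_inhomogeneous} across $B$, $B \cap B'$, and $B \setminus B'$, and the presence of the parameter $t$; but these are precisely the inputs that give rise to the coefficients $\alpha$ and $2(1-\alpha)$ in the variational formulas, and the additive scalar $\lambda$ plays no role in that part of the computation and is simply carried through to the final expression.
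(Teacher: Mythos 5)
Your opening idea is exactly the paper's proof: the paper disposes of this proposition in one line by noting that the argument for Proposition~\ref{prop:limits_phi1_phi2} ``extends verbatim,'' and your justification---$H^{\lambda}_{A}$ is a sum of independent single-site terms that never interacts with the Gaussian integration by parts, so it rides along and simply shifts the argument of $\fsc$ by $+\lambda$ at the decoupled endpoint---is the correct reason why. Your bookkeeping of the coefficients ($2d\eta q+\lambda$ per vertex for one block; $2d\eta q+\lambda$ on $B\,\triangle\, B'$ versus $4d\eta q+\lambda$ on $B\cap B'$, with penalties $-\tfrac{d}{2}\eta q^2$ and $-d\eta q^2$) also matches the paper.

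However, your re-description of how Proposition~\ref{prop:limits_phi1_phi2} itself is proved contains two concrete inaccuracies that would matter if you actually carried the plan out rather than citing that proposition. First, the directions are reversed: on the Nishimori line the identity $R_{1,2}\stackrel{d}{=}R_{1,0}$ makes the interpolation derivative equal to $\tfrac{\eta}{4}\E\langle(R_{1,2}-q)^2\rangle_s-\tfrac{\eta q^2}{4}\ge-\tfrac{\eta q^2}{4}$, so dropping the square gives the \emph{lower} bound $\liminf f_B\ge\sup_q\{\cdots\}$; the \emph{upper} bound is the hard direction, and the paper obtains it not by adaptive interpolation but by restricting to configurations with prescribed overlaps $\underline{m}$ and controlling $\E\max_m\log Z_n(m)$ via a log-Sobolev/Herbst concentration argument. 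Second, and more substantively, the heterogeneity of the SNR across $B^{\bullet}$ and the $2d$ joints is not ``secondary bookkeeping'': it forces the interpolation to carry $2d+1$ (resp.\ $4d+1$) independent overlap parameters, so the lower bound is a sup over $\underline{q}\in[0,1]^{2d+1}$ and the upper bound a sup-inf over $(\underline{m},\underline{q})$, and these do not visibly match. Assumption~\ref{assump:lipschitz} enters precisely here: the Lipschitz control of ${\fsc}'$, combined with the invertibility of the Jacobian of the map $\underline{q}\mapsto(r^{\bullet},r_{B\cap B'})$ (Lemma~\ref{lem:invertible}), forces every stationary point to have all coordinates equal when $\eta\le\eta_0$, which collapses both formulas to the single-parameter expressions~\eqref{eq:var_1_scalar}--\eqref{eq:var_2_scalar}. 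A single-parameter adaptive interpolation as you sketch it implicitly assumes the region-wise overlaps are already locked together, which is the very thing being proved. Since your stated plan is to reuse the existing proof verbatim rather than to rebuild it, the proposal is acceptable, but the elaboration should either be corrected on these two points or simply replaced by the citation.
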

Similarly to previous computations, the vertex- and pair-correlations defined respectively as   
\begin{equation}
\varphi_{A}^{\sv} := \frac{1}{|A|} \sum_{x\in A}  \E \Big[\E\big[\theta_x| Y^{\delta,\eta,\lambda}_{A}\big]^2\Big] 
~~\mbox{and}~~
\varphi_{A}^{\se} := \frac{1}{|A|^2} \sum_{x,y\in A}  \E \Big[\E\big[\theta_x\theta_{y}| Y^{\delta,\eta,\lambda}_{A}\big]^2\Big],\label{eq:one_two_point}
\end{equation}
where $Y^{\delta,\eta,\lambda}_{A} = \{Y^{\delta}_{A},Y^{\eta}_{A},Y^{\lambda}_{A}\}$ and $A$ is either $B$ or $B \cup B'$, have well-defined limits as $\scL \to \infty$ for all except countably many values of $\eta$ and $\lambda$. Note that the introduction of the scalar side information breaks the sign symmetry of the random variables $\theta_x$ under the posterior, and the value of the one-point correlation function $\varphi_{A}^{\sv}$ is no longer trivially zero. Furthermore, these two quantities are related in a simple way in the limit. 
\begin{lemma}\label{lem:edge_vertex_overlap}
Let $A$ be either $B$ or $B \cup B'$. Under Assumption~\ref{assump:lipschitz}, for all $\delta > \delta_c$, all except countably many $\eta \ge 0$ and $\lambda \ge 0$, $\varphi_{A}^{\sv}$ and $\varphi_{A}^{\se}$ have limits and $\underset{\scL \to \infty}{\lim} (\varphi_{A}^{\sv})^2 = \underset{\scL \to \infty}{\lim}  \varphi_{A}^{\se}$. 
\end{lemma}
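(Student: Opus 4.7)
The plan is to recast the claimed identity as a concentration statement for an overlap and control the resulting variance via convexity and Assumption~\ref{assump:lipschitz}. Let $R_{0,1}(A) := |A|^{-1}\sum_{x\in A}\theta_{0x}\theta^1_x$ denote the overlap of a posterior sample $\theta^1$ with the planted truth $\theta_0$. By the tower property and the Nishimori identity $\E[\theta_{0x}\mid Y^{\delta,\eta,\lambda}_A] = \langle\theta_x\rangle$, one has $\varphi^{\sv}_A = \E\langle R_{0,1}(A)\rangle$; by the Nishimori exchange $(\theta_0, \theta^1) \stackrel{d}{=}(\theta^1, \theta^2)$ under $\E\langle\cdot\rangle$, one has $\varphi^{\se}_A = \E\langle R_{1,2}(A)^2\rangle = \E\langle R_{0,1}(A)^2\rangle$. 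Consequently
\[\varphi^{\se}_A - (\varphi^{\sv}_A)^2 \;=\; \var_{\E\langle\cdot\rangle}\bigl(R_{0,1}(A)\bigr),\]
and the lemma is equivalent to the concentration $\var(R_{0,1}(A)) \to 0$. I would split this variance into thermal and disorder parts, $\E[\var_{\langle\cdot\rangle}(R_{0,1})] + \var_\E(\langle R_{0,1}\rangle)$, and handle them in turn.

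For the thermal variance, introduce the two-parameter extension $\widetilde{f}_A(\delta,\eta,t,s,r)$ of the one-block or two-block free energy in which the scalar Hamiltonian $H^\lambda_A$ is replaced by the generalized $\widetilde{H}^{r,s}_A$ of~\eqref{eq:free_energy_scalar_general}, the Nishimori line being $s = r = \lambda$. Differentiating twice in $s$ gives
\[\partial_s^2 \widetilde{f}_A(s,r) \;=\; |A|\,\E\bigl[\var_{\langle\cdot\rangle}(R_{0,1})\bigr]\;\ge\;0,\]
so $\widetilde{f}_A$ is convex in $s$. Under Assumption~\ref{assump:lipschitz}(A2), together with an extension of Proposition~\ref{prop:additional_scalar} to the two-parameter setting, the limit $\widetilde{\phi}(s,r)$ has Lipschitz $s$-derivative uniformly in $r$ with constant $L'$. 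Combined with the convexity of $\widetilde{f}_A$, a standard difference-quotient argument yields $\partial_s^2 \widetilde{f}_A(s,r) \le L' + o_{\scL}(1)$ for a.e.\ $s$, hence $\E[\var_{\langle\cdot\rangle}(R_{0,1})] = O(|A|^{-1}) \to 0$.

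For the disorder variance, let $g_{|A|}(s) := |A|^{-1}\log Z_A(s)$, which is a convex function of $s$ with $\partial_s g_{|A|}(s) = \langle R_{0,1}\rangle$. Gaussian concentration applied to the noises $(z_x, Z_{uv})$, together with an Efron--Stein bounded-difference estimate for the $\pm 1$-valued observations $Y^\delta$, yields $\var(g_{|A|}(s)) = O(|A|^{-1})$. The classical convex-to-derivative trick then converts this into
\[\bigl|\partial_s g_{|A|}(s) - \partial_s \widetilde{f}_A(s)\bigr| \;\le\; \frac{L'\epsilon}{2} + \frac{1}{\epsilon}\bigl(|\Delta(s-\epsilon)| + |\Delta(s)| + |\Delta(s+\epsilon)|\bigr),\]
with $\Delta := g_{|A|} - \widetilde{f}_A$, where the first term comes from a Taylor expansion of $\widetilde{f}_A$ using the Lipschitz bound on $\partial_s\widetilde{\phi}$ and the second from the two-sided convexity inequality $\partial_s g_{|A|}(s)\,\epsilon \le g_{|A|}(s+\epsilon)-g_{|A|}(s)$ applied at $\pm\epsilon$. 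Squaring, taking expectations, and optimizing at $\epsilon \asymp |A|^{-1/4}$ yields $\var_\E(\langle R_{0,1}\rangle) = O(|A|^{-1/2}) \to 0$.

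Combining the two estimates gives $\var(R_{0,1}(A)) \to 0$ as $\scL \to \infty$, which by the Nishimori reduction gives the identity. The exceptional countable set of $(\eta,\lambda)$ in the statement arises because convex functions are differentiable off a countable set, so convergence of $\partial_s \widetilde{f}_A$ to $\partial_s \widetilde{\phi}$ (and of the analogous $\lambda$-derivatives) holds only off such an exceptional set. The most delicate step is the disorder variance: the direct Gaussian--Poincar\'e bound on $\langle R_{0,1}\rangle$ fails because the connected correlators $\langle\theta_x\theta_y\rangle - \langle\theta_x\rangle\langle\theta_y\rangle$ are only bounded in absolute value and need not decay, so routing through the free energy and exploiting the $C^{1,1}$ regularity of the limit encoded in Assumption~\ref{assump:lipschitz} in order to trade an $O(|A|^{-1})$ variance of $g_{|A|}$ for an $o(1)$ variance of its derivative is the key step.
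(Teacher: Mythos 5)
Your Nishimori reduction is valid: $\varphi^{\sv}_A=\E\langle R_{0,1}(A)\rangle$, $\varphi^{\se}_A=\E\langle R_{0,1}(A)^2\rangle$, so the lemma is equivalent to $\var(R_{0,1}(A))\to 0$. But this is a genuinely different route from the paper's, which is essentially a three-line corollary of Proposition~\ref{prop:additional_scalar}: by convexity, $\varphi^{\sv}_A$ and $\varphi^{\se}_A$ converge (off countable sets) to the $\lambda$- and $\eta$-derivatives of the limiting variational formula, which the envelope theorem identifies as $2\,\partial_{x_2}\fsc(\delta,2d\eta\bar q^{\bullet}+\lambda)$ and $\bar q^{\bullet 2}$ respectively; the stationarity condition $\partial_{x_2}\fsc(\delta,2d\eta\bar q^{\bullet}+\lambda)=\bar q^{\bullet}/2$ (available because $\bar q^{\bullet}>0$ when $\delta>\delta_c$) then gives the identity. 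Your route, if completed, proves the stronger full overlap-concentration statement, but at the cost of machinery the paper does not develop.

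The genuine gap is that both halves of your variance bound perturb $s$ at fixed $r$, i.e.\ off the Nishimori line, and the convexity arguments you use only deliver conclusions for \emph{almost every} $s$ at fixed $r$; the point you need, $s=r=\lambda$, is a single point and is not covered (and the exceptional null set depends on $r$, so you cannot Fubini your way to ``almost every $\lambda$''). To upgrade to a pointwise statement at $s=r$ you invoke Lipschitz regularity of $\partial_s$ of the \emph{limit} of the one-block/two-block free energy in the two-parameter $(s,r)$ setting, but neither the existence of that limit nor its regularity is available: Assumption~\ref{assump:lipschitz}(A2) concerns only $\widetilde{\fsc}$, the lattice-plus-scalar model without GOE side information, and the proof of Proposition~\ref{prop:additional_scalar} is a Guerra interpolation whose lower bound uses the Bayes-optimal identity $R_{1,2}\stackrel{d}{=}R_{1,0}$ to absorb the negative term $-\frac{\eta}{4}\E\langle(R_{1,2}-q)^2\rangle$; that step fails for $s\neq r$, so the ``extension to the two-parameter setting'' is not a routine corollary. (Relatedly, your pointwise claim $\partial_s^2\widetilde f_A\le L'+o_{\scL}(1)$ does not follow from convergence of convex functions to a $C^{1,1}$ limit; only the integrated version does, which again yields an a.e.\ statement.) The thermal part is easily repaired by staying on the Nishimori line: Lemma~\ref{lem:decoupling_scalar} plus Nishimori and Cauchy--Schwarz give $\E[\var_{\langle\cdot\rangle}(R_{0,1})]\to 0$ for a.e.\ $\lambda$. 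The disorder part is the real obstruction: perturbing in $\lambda$ along the Nishimori line makes $\partial_\lambda g$ equal to $\langle R_{0,1}\rangle$ plus a Gaussian fluctuation term $\tfrac{1}{2\sqrt{\lambda}|A|}\sum_x z_x\langle\theta_x\rangle$ whose concentration must be proved separately, while perturbing in $s$ requires the off-Nishimori limit you have not established. Either fix requires substantive additional work.
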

\begin{proof} 
We only consider the case $A=B$. The case $A = B \cup B'$ can be treated similarly. 
Let $\bar{q}^{\bullet}$ be the unique maximizer in~\eqref{eq:var_1_scalar} (see Lemma~\ref{lem:derivative_phi}).
Convexity in $\lambda$ and $\eta$ imply that $\varphi_{A}^{\sv}$ and $\varphi_{A}^{\se}$ converge to $2 \frac{\rmd }{\rmd x_2}\fsc(\delta,2d \eta \bar{q}^{\bullet}+\lambda)$, and $\bar{q}^{\bullet 2}$ respectively (denoting by $x_2$ the second variable of $\fsc$). 
When $\delta>\delta_c$, $\bar{q}^{\bullet}\ge q_{\star\star}(\delta)>0$. Therefore, by optimality of $\bar{q}^{\bullet}$ we have
\[\frac{\rmd }{\rmd x_2}\fsc(\delta,2d \eta\bar{q}^{\bullet}+\lambda) = \frac{\bar{q}^{\bullet}}{2},\]
and this proves the claim.
\end{proof}   
Next, we exploit information contained in the second derivatives of the free energies. 
\begin{lemma}\label{lem:decoupling_scalar}
Let $A$ be $B$ or $B \cup B'$. For almost all $\lambda\ge 0$,
\[\lim_{\scL \to \infty} \frac{1}{|A|^2}\sum_{x,y \in A} \E \Big[\Big(\E\big[\theta_x\theta_y| Y^{\delta,\eta,\lambda}_{A}\big]-\E\big[\theta_x| Y^{\delta,\eta,\lambda}_{A}\big] \E\big[\theta_y| Y^{\delta,\eta,\lambda}_{A}\big]\Big)^2\Big] = 0.\]
\end{lemma}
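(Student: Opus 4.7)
The plan is a short variance-decomposition argument; I treat the case $A=B$ (the case $A=B\cup B'$ is identical). Write $\langle\cdot\rangle$ for the posterior expectation given $Y_B^{\delta,\eta,\lambda}$ and denote independent posterior replicas by $\theta^1,\theta^2,\ldots$; set $m_x:=\langle\theta_x\rangle=\E[\theta_x|Y_B^{\delta,\eta,\lambda}]$, $m_{xy}:=\langle\theta_x\theta_y\rangle$, and introduce
\[M\;:=\;\frac{1}{|B|}\sum_{x\in B} m_x^2\;=\;\langle Q_{12}\rangle,\qquad R_{1,0}\;:=\;\frac{1}{|B|}\sum_{x\in B}\theta_x\, m_x,\]
where $Q_{12}:=\frac{1}{|B|}\sum_x\theta^1_x\theta^2_x$ is the two-replica overlap. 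Expanding the square in the statement of the lemma and using the Nishimori identity $\E[\theta_x\theta_y|Y_B^{\delta,\eta,\lambda}]=m_{xy}$ on the cross term, the target rewrites as
\[V_B\;=\;\varphi_B^{\se}\;-\;2\Psi_B\;+\;\Sigma_B,\qquad \Psi_B:=\E[R_{1,0}^2],\ \Sigma_B:=\E[M^2],\]
and a second Nishimori application yields $\E[R_{1,0}]=\E[M]=\varphi_B^{\sv}$.

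The heart of the argument is the elementary identity
\[\varphi_B^{\se}\;-\;(\varphi_B^{\sv})^2 \;=\; \underbrace{\E\langle(Q_{12}-\langle Q_{12}\rangle)^2\rangle}_{=\,\varphi_B^{\se}-\Sigma_B\,\ge\,0} \;+\; \underbrace{\var(M)}_{=\,\Sigma_B-(\varphi_B^{\sv})^2\,\ge\,0},\]
which decomposes the edge--vertex gap into a ``thermal'' and a ``disorder'' variance, both manifestly nonnegative. By Lemma~\ref{lem:edge_vertex_overlap} the left-hand side tends to zero as $\scL\to\infty$ for almost every $\lambda$, so each nonnegative summand does as well. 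In particular, denoting $q^\bullet:=\lim_{\scL\to\infty}\varphi_B^{\sv}$ (so that $\lim\varphi_B^{\se}=(q^\bullet)^2$ by the same lemma), one obtains $\Sigma_B\to (q^\bullet)^2$.

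To finish, I would pin $\Psi_B$ between two converging quantities. Jensen's inequality gives $\Psi_B\ge(\E R_{1,0})^2=(\varphi_B^{\sv})^2\to(q^\bullet)^2$, while the nonnegativity of $V_B$ (it is a sum of squared Gibbs covariances) forces $2\Psi_B\le \varphi_B^{\se}+\Sigma_B\to 2(q^\bullet)^2$. Pinching gives $\Psi_B\to(q^\bullet)^2$, and substituting back into $V_B=\varphi_B^{\se}-2\Psi_B+\Sigma_B$ yields $V_B\to 0$, as desired.

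I do not anticipate a serious obstacle: the argument is purely algebraic once Lemma~\ref{lem:edge_vertex_overlap} is in hand, using only two applications of the Nishimori identity together with Jensen's inequality and the sum-of-squares nonnegativity of $V_B$. No additional concentration, convexity or interpolation input is needed. The only point to bookkeep is the countable null set of $\lambda$ outside of which Lemma~\ref{lem:edge_vertex_overlap} applies, which is consistent with the ``almost all $\lambda$'' conclusion of the statement.
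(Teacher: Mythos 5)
Your argument is correct as algebra, but it takes a genuinely different route from the paper and, as written, proves a slightly weaker statement. The paper's proof is purely thermodynamic and uses no prior limit results: the second $\lambda$-derivative of the free energy $f_A$ equals $\tfrac{|A|}{4}$ times the target quantity $R$, the first derivative is uniformly bounded by $\tfrac12$, hence $\int_0^\lambda R\,\rmd\lambda'\le 2/|A|\to 0$, and nonnegativity of $R$ gives $R\to 0$ for a.e.\ $\lambda$. That argument needs neither Assumption~\ref{assump:lipschitz}, nor $\delta>\delta_c$, nor any restriction on $\eta$ or $t$; the only price is the a.e.-$\lambda$ qualifier coming from the integration step. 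Your route instead pins everything on Lemma~\ref{lem:edge_vertex_overlap}: the exact finite-volume identity $\varphi_B^{\se}-(\varphi_B^{\sv})^2=\E\langle(Q_{12}-\langle Q_{12}\rangle)^2\rangle+\var(M)$ (which I have checked, together with the tower-property identifications $\Psi_B=\tfrac{1}{|B|^2}\sum_{x,y}\E[m_{xy}m_xm_y]$ and $\E[R_{1,0}]=\E[M]=\varphi_B^{\sv}$) plus the two-sided pinching of $\Psi_B$ does close the argument. What your approach buys is more refined information: it shows separately that the thermal overlap fluctuation $\E\langle(Q_{12}-\langle Q_{12}\rangle)^2\rangle$ and the disorder fluctuation $\var(M)$ both vanish, and identifies the limits of $\Sigma_B$ and $\Psi_B$, none of which the paper's proof gives directly.

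The caveat you should flag explicitly: Lemma~\ref{lem:edge_vertex_overlap} is stated only under Assumption~\ref{assump:lipschitz}, for $\delta>\delta_c$, and for all except countably many $\eta$ and $\lambda$, so your proof inherits all of these hypotheses, whereas Lemma~\ref{lem:decoupling_scalar} as stated carries none of them (and is indeed provable without them). This is harmless for every invocation in the paper (all occur inside Theorem~\ref{thm:everything}, which assumes $\delta>\delta_c$ and Assumption~\ref{assump:lipschitz}), but it does mean your write-up establishes a restricted version of the lemma rather than the lemma itself; either add these hypotheses to your statement or switch to the free-energy second-derivative argument to get the full claim.
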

\begin{proof}
Call the quantity in the above display $R$. Taking two derivatives of the free energy $f_A$ with respect to $\lambda$, we obtain 
\[ \frac{\rmd^2 }{\rmd \lambda^2} f_A = \frac{1}{4|A|}\sum_{x,y \in A} \E \Big[\Big(\E\big[\theta_x\theta_y| Y^{\delta,\eta,\lambda}_{A}\big]-\E\big[\theta_x| Y^{\delta,\eta,\lambda}_{A}\big] \E\big[\theta_y| Y^{\delta,\eta,\lambda}_{A}\big]\Big)^2\Big] = \frac{|A|}{4} R.\]
Moreover, the first derivative is uniformly bounded: $\frac{\rmd }{\rmd \lambda} f_A\le \frac{1}{2}$, so $\int_0^\lambda \big(\frac{\rmd^2 }{\rmd \lambda^2} f_A\big) \rmd \lambda' \le \frac{\lambda}{2}$. Therefore $\int_0^\lambda R \rmd \lambda' \le 2/|A| \to 0$ as $\scL \to \infty$.  Since $R$ is non-negative, it must converge to $0$ for almost all values of $\lambda$. 
\end{proof} 
 Lemmas~\ref{lem:edge_vertex_overlap} and~\ref{lem:decoupling_scalar} provide averaged forms of decoupling of the variables at different vertices. Next, we show a form of continuity in $\lambda$ at zero for the pair correlations:    
\begin{lemma}\label{lem:removing_scalar}
Let $A$ be $B$ or $B \cup B'$. For almost all $\eta \ge 0$,
\[\lim_{\lambda\to 0} \lim_{\scL \to \infty} \frac{1}{|A|^2}\sum_{x,y \in A} \E \Big[\Big(\E\big[\theta_x\theta_y| Y^{\delta,\eta,\lambda}_{A}\big]-\E\big[\theta_x\theta_y| Y^{\delta,\eta}_{A}\big]\Big)^2\Big] = 0.\]
\end{lemma}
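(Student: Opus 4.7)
The plan is to collapse the mean-square distance between the two conditional expectations into a difference of pair correlations, identify the large-$\scL$ limit of each through the variational formulas of Proposition~\ref{prop:additional_scalar}, and finally establish continuity of that limit at $\lambda=0$ using Assumption~\ref{assump:lipschitz} and the first-order optimality condition.

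Set $X^{\lambda}_{xy} = \E[\theta_x\theta_y \mid Y^{\delta,\eta,\lambda}_A]$ and $X^{0}_{xy} = \E[\theta_x\theta_y \mid Y^{\delta,\eta}_A]$. Because $Y^{\delta,\eta}_A \subseteq Y^{\delta,\eta,\lambda}_A$, the tower property gives $\E[X^{\lambda}_{xy}\mid Y^{\delta,\eta}_A] = X^{0}_{xy}$, hence $\E[X^{\lambda}_{xy}X^{0}_{xy}] = \E[(X^{0}_{xy})^2]$. Averaging the square and summing,
\[\frac{1}{|A|^2}\sum_{x,y \in A} \E\Big[\big(X^{\lambda}_{xy} - X^{0}_{xy}\big)^2\Big] \;=\; \varphi_A^{\se}(\lambda) \;-\; \varphi_A^{\se}(0),\]
with $\varphi_A^{\se}$ as in~\eqref{eq:one_two_point}. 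It therefore suffices to show $\varphi_A^{\se}(\lambda) - \varphi_A^{\se}(0) \to 0$ in the iterated limit $\scL\to\infty$ followed by $\lambda\to 0^+$.

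By Proposition~\ref{prop:additional_scalar} the free energies $f_A(\delta,\eta,t,\lambda)$ converge to limits $\phi_j(\delta,\eta,\lambda)$ (with $j=1$ if $A=B$ and $j=2$ if $A=B\cup B'$) which are independent of $t$. Reproducing the argument of Proposition~\ref{prop:locking} with the additional scalar piece (whose Hamiltonian is $t$-independent) still yields the locking of overlaps on sub-blocks of $A$, so the derivative in $\eta$ of $f_A$ converges, for a.e. $\eta,\lambda$, to a constant multiple of $\lim_{\scL\to\infty}\varphi_A^{\se}(\lambda)$. On the other hand, convexity in $\eta$ implies $\frac{\rmd}{\rmd\eta}f_A \to \frac{\rmd}{\rmd\eta}\phi_j$ except on a countable set of $\eta$. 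By Lemma~\ref{lem:derivative_phi} applied to~\eqref{eq:var_1_scalar}–\eqref{eq:var_2_scalar}, we obtain $\lim_{\scL\to\infty}\varphi_A^{\se}(\lambda) = (\bar q^{\bullet}(\lambda))^2$, where $\bar q^{\bullet}(\lambda)$ is the unique maximizer in the corresponding variational formula.

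It remains to verify that $\bar q^{\bullet}(\lambda) \to \bar q^{\bullet}(0)$ as $\lambda \to 0^+$ (along values in the complement of the countable exceptional set). The first-order optimality condition reads, in the one-block case, ${\fsc}'(\delta;\,2d\eta\,\bar q^{\bullet}(\lambda) + \lambda) = \bar q^{\bullet}(\lambda)/2$, and analogously for the two-block case. Differencing the equations at $\lambda$ and at $0$ and using that ${\fsc}'$ is Lipschitz on $\R_+\setminus\mathcal{D}$ with constant $L$ (Assumption~\ref{assump:lipschitz}A1), one gets
\[\big|\bar q^{\bullet}(\lambda) - \bar q^{\bullet}(0)\big| \;\le\; 2L\big(\lambda + 2d\eta\,|\bar q^{\bullet}(\lambda) - \bar q^{\bullet}(0)|\big),\]
which, for $\eta \le \eta_0$ sufficiently small (so that $4Ld\eta < 1$), yields $|\bar q^{\bullet}(\lambda) - \bar q^{\bullet}(0)| = O(\lambda)$ and hence the desired continuity. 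Squaring and combining with the previous step closes the proof.

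The main technical obstacle is the interplay between taking limits in $\scL$ and in $\lambda$: the identification $\lim_{\scL\to\infty}\varphi_A^{\se}(\lambda) = (\bar q^\bullet(\lambda))^2$ and the envelope/differentiation arguments hold only for $\lambda$ outside a countable set $\mathcal{D}'$. Since $\mathcal{D}'$ has measure zero and the statement is already only for a.e.\ $\eta$, one simply restricts $\lambda\to 0$ along a sequence avoiding $\mathcal{D}'$; the Lipschitz estimate then propagates continuity along this full-measure sequence.
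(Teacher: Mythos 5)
Your opening reduction is exactly the paper's: since $Y^{\delta,\eta}_A\subseteq Y^{\delta,\eta,\lambda}_A$, the tower property collapses the mean-square difference to $\varphi_A^{\se}(\delta,\eta,\lambda)-\varphi_A^{\se}(\delta,\eta,0)$. From there you diverge completely. The paper's proof is soft: it writes $\varphi_A^{\se}=4\frac{\rmd}{\rmd\eta}f_A$, integrates the (nonnegative) difference $R$ over $\eta'\in[0,\eta]$ to get a telescoping combination of free energies, bounds that by $4\lambda$ using only the \emph{uniform} $\tfrac12$-Lipschitz continuity of $\lambda\mapsto f_A$ (the derivative in $\lambda$ is bounded by $\tfrac12$), and applies Fatou's lemma to conclude for a.e.\ $\eta$. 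That argument needs no variational formulas, no locking, no Assumption~\ref{assump:lipschitz}, and no restriction on $\delta$, $\eta$, or $t$.

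Your route instead identifies $\lim_{\scL\to\infty}\varphi_A^{\se}(\lambda)$ with the square of the maximizer $\bar q^{\bullet}(\lambda)$ of~\eqref{eq:var_1_scalar}--\eqref{eq:var_2_scalar} and proves $|\bar q^{\bullet}(\lambda)-\bar q^{\bullet}(0)|=O(\lambda)$ from the first-order conditions. The mechanics of that Lipschitz estimate are fine, but the argument only runs under hypotheses absent from the statement: (i) Assumption~\ref{assump:lipschitz} and $\eta\le\eta_0$, needed both for Proposition~\ref{prop:additional_scalar}/the locking step (which in turn holds only for a.e.\ $t$) and for your contraction $4Ld\eta<1$; and (ii) $\delta>\delta_c$, without which the maximizer need not be interior, the stationarity condition becomes an inequality at $q=0$, and the differencing argument collapses. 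The lemma is stated for all $\delta$ and almost all $\eta\ge0$, the paper's proof delivers exactly that, and the generality is not cosmetic: the proof of Theorem~\ref{thm:lower_bound} relies on a lemma proved ``similarly'' in the regime $\delta<\delta_c$, where your argument gives nothing. So treat your proof as establishing a restricted version sufficient for Theorem~\ref{thm:everything}, not the lemma as stated; the integration-plus-Fatou argument is both more elementary and strictly stronger here.
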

\begin{proof}
Call $R$ the quantity whose double limit is taken in the above display. Since $Y_{A}^{\delta,\eta,\lambda}$ contains more information than $Y_{A}^{\delta,\eta}$, we have 
\begin{align*}
R &= \frac{1}{|A|^2}\sum_{x,y \in A} \Big\{\E \Big[\E\big[\theta_x\theta_y| Y^{\delta,\eta,\lambda}_{A}\big]^2\Big]- \E \Big[\E\big[\theta_x\theta_y| Y^{\delta,\eta}_{A}\big]^2\Big]\Big\}\\
&= \varphi_{A}^{\se}(\delta,\eta,\lambda) - \varphi_{A}^{\se}(\delta,\eta,0).
\end{align*}
Since $\varphi_{A}^{\se} = 4\frac{\rmd}{\rmd \eta} f_A$, we have 
\[\int_0^{\eta} R \rmd \eta' \le 4\big|f_A(\delta,\eta,t,\lambda) -f_A(\delta,\eta,t,0)\big| + 4\big|f_A(\delta,0,t,\lambda) -f_A(\delta,0,t,0)\big|.\]
Since $\lambda \mapsto f_A(\delta,\eta,t,\lambda)$ is $L$-Lipschitz with $L = \frac{1}{2}$ uniformly in the other variables (recall that the derivative is uniformly bounded by $\frac{1}{2}$), we arrive at $\int_0^{\eta} R \rmd \eta' \le 4\lambda$. Since $R$ is non-negative and has a limit as $\scL \to \infty$, Fatou's lemma implies
\[\int_0^{\eta} \lim_{\scL \to \infty} R~ \rmd \eta' \le 4\lambda.\] 
Sending $\lambda$ to zero finishes the proof.
\end{proof}
Finally, we prove another form of decoupling which allows to split correlations of quadruples of vertices into the product of pair correlations:  
\begin{lemma}\label{lem:decoupling_variance}
Let $A$ be $B$ or $B \cup B'$. For all $\lambda \ge 0$ and almost all $\eta\ge 0$,
\[\lim_{\scL \to \infty} \frac{1}{|A|^4} \sum_{x,y,w,z \in A} \E \Big[\Big(\E\big[\theta_x\theta_y\theta_w\theta_z| Y^{\delta,\eta,\lambda}_{A}\big]-\E\big[\theta_x\theta_y| Y^{\delta,\eta,\lambda}_{A}\big]\E\big[\theta_w\theta_z| Y^{\delta,\eta,\lambda}_{A}\big]\Big)^2\Big]=0.\]
\end{lemma}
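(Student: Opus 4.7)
The proof follows the scheme of Lemma~\ref{lem:decoupling_scalar} with $\eta$ playing the role of $\lambda$. Recall that $\frac{\rmd f_A}{\rmd \eta}$ is, up to dimensional constants, a weighted sum of averaged pair correlations $\frac{1}{|A|^2}\sum_{x,y\in A}\E\langle \theta_x\theta_y\rangle^2$ over the sub-regions of $A$ that appear in the heterogeneous GOE Hamiltonian (cf.\ Eq.~\eqref{eq:derivative_f_Bn_eta} and Proposition~\ref{prop:additional_scalar}). Each such average lies in $[0,1]$, so $\frac{\rmd f_A}{\rmd \eta}$ is uniformly bounded by a dimensional constant $C_d$.

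The crucial step is to compute $\frac{\rmd^2 f_A}{\rmd \eta^2}$. Writing $\frac{\rmd^2 \log Z_A}{\rmd \eta^2} = \langle \frac{\rmd^2 H}{\rmd \eta^2}\rangle + \var_{\langle\cdot\rangle}\big(\frac{\rmd H}{\rmd \eta}\big)$, the Gibbs-variance term expands as a sum indexed by quadruples $(x,y,w,z)\in A^4$ of the four-point correlation differences $\langle\theta_x\theta_y\theta_w\theta_z\rangle - \langle\theta_x\theta_y\rangle\langle\theta_w\theta_z\rangle$, weighted by random coefficients built from the Gaussian disorder $Z_{uv}$ and the ground truth $\theta_0$. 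Applying Gaussian integration by parts on each $Z_{uv}$ and using Nishimori identities---notably
\[
\E\big[\theta_{0x}\theta_{0y}\theta_{0w}\theta_{0z}\langle\theta_x\theta_y\rangle\langle\theta_w\theta_z\rangle\big] = \E\big[\langle\theta_x\theta_y\theta_w\theta_z\rangle\langle\theta_x\theta_y\rangle\langle\theta_w\theta_z\rangle\big]
\]
together with its 8-point counterpart---this expression should simplify to
\[
\frac{\rmd^2 f_A}{\rmd \eta^2} \ge \frac{c_d\, |A|}{\eta}\, R(\eta) - g_A(\eta),
\]
where $R(\eta)$ denotes the quantity in the lemma statement, $c_d > 0$ is a dimensional constant, and $g_A$ is bounded uniformly in $\scL$ on every compact subinterval of $(0,\infty)$.

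Given this estimate, the conclusion is immediate. Integrating on any $[\eta_1,\eta_2]\subset (0,\infty)$ and using the bound on $\frac{\rmd f_A}{\rmd \eta}$ yields
\[
c_d\, |A|\int_{\eta_1}^{\eta_2}\eta^{-1}R(\eta)\,\rmd\eta \;\le\; 2C_d + \int_{\eta_1}^{\eta_2}|g_A(\eta)|\,\rmd\eta \;=\; O(1),
\]
so $\int_{\eta_1}^{\eta_2}R(\eta)\,\rmd\eta = O(|A|^{-1})\to 0$ as $\scL\to\infty$. Since $R \ge 0$, Fatou's lemma gives $R(\eta)\to 0$ for almost every $\eta\in[\eta_1,\eta_2]$; the arbitrariness of the interval then yields the claim.

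The principal obstacle is establishing the lower bound on $\frac{\rmd^2 f_A}{\rmd \eta^2}$. Applying Gaussian IBP to the two disorder factors $Z_{xy}, Z_{wz}$ produces Gibbs expectations of products of up to eight $\theta$'s, which must be collected into the squared quantity $R$ via repeated Nishimori swaps (the relevant manipulations are analogous to those in~\cite{panchenko2013sherrington,lelarge2017fundamental}). The heterogeneity of $\omega^{\sSK}_A$ further multiplies the bookkeeping: the contributions from the three sub-regions (core $B$, intersection $B\cap B'$, complement $B\setminus B'$) must be handled separately and recombined coherently. The additional scalar side information of strength $\lambda$ only enters the Gibbs measure in a parameter-independent way under $\eta$-differentiation, so it contributes nothing new to this computation.
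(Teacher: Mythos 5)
Your proposal follows the same route as the paper: differentiate the free energy twice in $\eta$, recognize the second derivative as (a positive multiple of $|A|$ times) the four-point fluctuation quantity $R(\eta)$, use the uniform bound on the first derivative to conclude $\int R\,\rmd\eta = \bigo(|A|^{-1})$, and finish with nonnegativity of $R$ and Fatou. That is exactly the paper's argument, so the strategy and the conclusion are right.

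The one place where you make the argument look harder than it is (and state it in a form that is not quite what the computation yields) is the "principal obstacle." The second derivative here is not a one-sided bound with a $1/\eta$ singularity and an error term $g_A$: it is an exact identity. On the Nishimori line the first derivative is already the clean quantity $\frac{\rmd}{\rmd\eta}f_A = \frac{1}{4|A|^2}\sum_{x,y}\E\big[\E[\theta_x\theta_y|Y]^2\big]$ (up to the bookkeeping over the sub-regions of the heterogeneous $\omega^{\sSK}_A$), because the $\eta^{-1/2}$ factors produced by differentiating $\sqrt{\eta}$ cancel after Gaussian integration by parts against the disorder; differentiating once more in $\eta$ then gives exactly $\frac{1}{8}|A|^{3}\cdot\frac{1}{|A|^4}\sum_{x,y,w,z}\E\big[(\E[\theta_x\theta_y\theta_w\theta_z|Y]-\E[\theta_x\theta_y|Y]\E[\theta_w\theta_z|Y])^2\big]$, with no residual $1/\eta$ and no $g_A$. (A correct weakened inequality of the form you wrote can be extracted on a compact subinterval by shrinking $c_d$, so your integration step still goes through, but the exact identity is what the standard I-MMSE/fluctuation computation delivers and is what the paper invokes.) With that identity in hand your remaining steps are correct, provided you also use that $\frac{\rmd^2}{\rmd\eta^2}f_A\ge 0$ (convexity) so that $\int_{\eta_1}^{\eta_2}\frac{\rmd^2}{\rmd\eta^2}f_A\,\rmd\eta$ is controlled by the sup of the first derivative.
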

\begin{proof}
This time we compute the second derivative of $f_A$ with respect to $\eta$:
\[ \frac{\rmd^2 }{\rmd \eta^2} f_A = \frac{1}{8|A|}\sum_{x,y,w,z \in A} \E \Big[\Big(\E\big[\theta_x\theta_y\theta_w\theta_z| Y^{\delta,\eta,\lambda}_{A}\big]-\E\big[\theta_x\theta_y| Y^{\delta,\eta,\lambda}_{A}\big] \E\big[\theta_w\theta_z| Y^{\delta,\eta,\lambda}_{A}\big]\Big)^2\Big].\]
We conclude in the same way as Lemma~\ref{lem:decoupling_scalar}, by noting that the first derivative $\frac{\rmd }{\rmd \eta} f_A$ is also uniformly bounded (by $\frac{1}{4}$).
\end{proof}

\section{Analysis of correlations: proof of Theorems~\ref{thm:everything} and~\ref{thm:lower_bound}}
\label{sec:proof_everything}
In this section we prove Theorem~\ref{thm:everything}. We first prove items 1.\ and 2., then we turn to item 3. 

\subsection{Proof of item 1: analysis of $M_B$ and $M_{B^{\bullet}}$}
We first show convergence of the expectation of $M_{B}^2$.  Recall that 
\[M_B =  \frac{1}{|B|} \sum_{x\in B}\theta^{B}_x \theta_x,\] 
where $\big(\theta^{B}_x\big)_{x\in B} \sim \P(\cdot | Y_B )$ is sampled from the one-block posterior on $B$, with $Y_B = \{Y^{\delta}_B , Y^{\eta}_B\}$. ($M_{B^{\bullet}}$ is defined similarly.) Then
\begin{align*}
\E[M_{B}^2] &= \frac{1}{|B|^2}\E \Big[\E\Big[\Big(\sum_{x\in B} \theta^{B}_x\theta_x\Big)^2 \Big| Y_B\Big]\Big] \\
&=  \frac{1}{|B|^2}\sum_{x,y \in B }  \E\big[\E[\theta_x\theta_y |Y_{B}]^2\big]\\
&=\E \left\langle R_{1,2}(B)^2\right\rangle_{B}.
\end{align*}
Form Corollary~\ref{cor:overlap_conv} and Lemma~\ref{lem:maximizers_are_close} the above converges to $q_{\star\star}^2(\delta)$ for almost all $t \in [0,1]$ as $\scL \to \infty$ followed by $\eta \to 0$.

Next we show that $\E\big[(M_B - M_{B^{\bullet}})^2\big]\to 0$.  Recall that $\alpha = \frac{|J|}{|B|}$ where $J$ is a joint, and let $M_{B\setminus B'} := \frac{1}{|B\setminus B'|}\sum_{x \in B\setminus B'}\theta^{B}_x \theta_x$.
 We have the two relations
 \begin{align*}
 M_{B} &= (1-2\alpha d) M_{B^{\bullet}} + \alpha\sum_{B' \sim B} M_{B \cap B'},\\
 M_{B\setminus B'} &= \frac{1-2\alpha d}{1-\alpha} M_{B^{\bullet}} + \frac{\alpha}{1-\alpha}\sum_{\underset{B'' \neq B'}{B'' \sim B}} M_{B \cap B''}.
 \end{align*}
 So
 \begin{align*}  
 M_B - M_{B^{\bullet}} &= \alpha \sum_{B' \sim B} (M_{B\cap B'} - M_{B^{\bullet}}),\\
 M_B - \frac{1}{2d}\sum_{B' \sim B}M_{B\setminus B'} &= \frac{\alpha(1-2\alpha d)}{2d(1-\alpha)} \sum_{B' \sim B} (M_{B}- M_{B\cap B'}).
  \end{align*}
  Therefore
 \begin{align*} 
 \E\big[(M_B - M_{B^{\bullet}})^2\big] &= \alpha^2 \E  \Big[\big(\sum_{B' \sim B} (M_{B\cap B'} - M_{B^{\bullet}}\big)^2\Big]\\
 &= c(d,\alpha) \E  \Big[\big( M_{B} - \frac{1}{2d}\sum_{B' \sim B} M_{B \setminus B'}\big)^2\Big]\\
 &\le \frac{c(d,\alpha)}{2d}   \sum_{B' \sim B} \E  \big[( M_{B} - M_{B \setminus B'})^2\big].
  \end{align*}   
Proposition~\ref{prop:locking} implies that the above converges to zero for almost all $t \in [0,1]$ and all $\eta < \eta_0$.  
  
Next, we show that $M_{B}^2$ has vanishing variance.We have 
 \begin{align*}
 \var(M_{B}^2) &= \E[M_{B}^4] - \E[M_{B}^2]^2\\
 &= \frac{1}{|B|^4}\sum_{x,y,z,w \in B} \E\big[\E [\theta_x\theta_y \theta_z\theta_w|Y_{B}]^2\big]-\E[M_{B}^2]^2.
 \end{align*}
 The second term in the above display converges to $q_{\star\star}^4(\delta)$. It remains to study the fourth moment $\E[M_{B}^4]$.
 Using the decoupling Lemma~\ref{lem:decoupling_variance}, we have
 \[\frac{1}{|B|^4}\sum_{x,y,z,w \in B} \E\Big[\E \big[\theta_x\theta_y \theta_z\theta_w|Y^{\delta,\eta}_{B}]^2\big] = \frac{1}{|B|^4}\sum_{x,y,z,w \in B} \E\Big[\E\big[\theta_x\theta_y|Y^{\delta,\eta}_{B}\big]^2\E\big[\theta_z\theta_w|Y^{\delta,\eta}_{B}\big]^2\Big] +o_{\scL}(1),\]     
 where $o_{\scL}(1) \to 0$ for almost every $\eta>0$ as $\scL \to \infty$. 
 Now we introduce side information from the scalar additive Gaussian noise channel~\eqref{eq:gaussian_channel} using Lemma~\ref{lem:removing_scalar} (with $A=B$):
 \begin{align*}
 \frac{1}{|B|^4}&\sum_{x,y,z,w \in B} \E\Big[\E \big[\theta_x\theta_y|Y^{\delta,\eta}_{B}\big]^2\E\big[\theta_z\theta_w|Y^{\delta,\eta}_{B}\big]^2\Big]\\
& =\frac{1}{|B|^4}\sum_{x,y,z,w \in B} \E\Big[\E \big[\theta_x\theta_y|Y^{\delta,\eta,\lambda}_{B}\big]^2\E\big[\theta_z\theta_w|Y^{\delta,\eta,\lambda}_{B}\big]^2\Big]+\mbox{error},
\end{align*}
where $\mbox{error} \to 0$ for almost every $\eta>0$ as $\scL \to \infty$ then $\lambda \to 0$. 
Using Lemma~\ref{lem:decoupling_scalar}, we have 
 \begin{align*}
 &\frac{1}{|B|^4}\sum_{x,y,z,w \in B} \E\Big[\E\big[\theta_x\theta_y|Y^{\delta,\eta,\lambda}_{B}]^2\E[\theta_z\theta_w|Y^{\delta,\eta,\lambda}_{B}\big]^2\Big]\\
  &= \frac{1}{|B|^4}\sum_{x,y,z,w \in B} \E\Big[\E\big[\theta_x|Y^{\delta,\eta,\lambda}_{B}\big]^2\E \big[\theta_y|Y^{\delta,\eta,\lambda}_{B}\big]^2\E \big[\theta_z|Y^{\delta,\eta,\lambda}_{B}\big]^2\E\big[\theta_w|Y^{\delta,\eta,\lambda}_{B}\big]^2\Big]+\mbox{error}\\
  &=\E\left[\Big(\frac{1}{|B|}\sum_{x \in B}\E\big[\theta_x|Y^{\delta,\eta,\lambda}_{B}\big]^2\Big)^4\right]+\mbox{error},
  \end{align*}
where $\mbox{error} \to 0$ for almost every $\lambda>0$ as $\scL \to \infty$. Let $X := \frac{1}{|B|}\sum_{x \in B}\E [\theta_x|Y^{\delta,\eta,\lambda}_{B}]^2$. The main term in the above display is $\E[X^4]$. We will compare it to $\E[X^2]^2$: since $X \in [0,1]$ almost surely, we have
\[\var(X^2) = \frac{1}{2} \E[(X^2-X'^2)^2] \le 2\E[(X-X')^2] = 4\var(X).\]
(Here, $X'$ is an independent copy of $X$.) Moreover,
\begin{align*}
\var(X) &= \frac{1}{|B|^2}\sum_{x,y \in B} \E\Big[\E\big [\theta_x|Y^{\delta,\eta,\lambda}_{B}\big]^2 \E\big[\theta_y|Y^{\delta,\eta,\lambda}_{B}\big]^2\Big] - \E\Big[\frac{1}{|B|}\sum_{x \in B}\E \Big[\theta_x|Y^{\delta,\eta,\lambda}_{B}\big]^2\Big]^2\\
&= \frac{1}{|B|^2}\sum_{x,y \in B} \E\Big[\E \big[\theta_x\theta_y|Y^{\delta,\eta,\lambda}_{B}\big]^2\Big] - \E\Big[\frac{1}{|B|}\sum_{x \in B}\E \big[\theta_x|Y^{\delta,\eta,\lambda}_{B}\big]^2\Big]^2 + \mbox{error}\\
&= \varphi_{B}^{\se}(\delta,\eta,\lambda) -  \varphi_{B}^{\sv}(\delta,\eta,\lambda)^2 + \mbox{error}.
\end{align*}
 Lemma~\ref{lem:removing_scalar} implies $\mbox{error} \to 0$ for almost every $\eta>0$ as $\scL \to \infty$ then $\lambda \to 0$. Moreover, from Lemma~\ref{lem:edge_vertex_overlap}, we have $\varphi_{B}^{\se}(\delta,\eta,\lambda) -  \varphi_{B}^{\sv}(\delta,\eta,\lambda)^2 \to 0$ as $\scL \to \infty$. Thus, we have shown that for almost every $\eta>0$,
 \[\lim_{\lambda\to 0} \lim_{\scL \to \infty}\var(X^2) = 0.\] 
Now, since Lemma~\ref{lem:decoupling_scalar} implies 
\[\E[X^2] = \E\Big[\Big(\frac{1}{|B|}\sum_{x \in B}\E \big[\theta_x|Y^{\delta,\eta,\lambda}_{B}\big]^2\Big)^2\Big] =  \frac{1}{|B|^2}\sum_{x,y \in B} \E\Big[\E\big[\theta_x\theta_y|Y^{\delta,\eta,\lambda}_{B}\big]^2\Big] +\mbox{error},\]
the above tends to $q_{\star\star}^2(\delta)$ as $\scL\to \infty$ then $\lambda \to 0$, and then $\eta \to 0$. We conclude that $\lim_{\eta \to 0} \lim_{\scL \to \infty}\var(M_B^2) = 0$. 
 
\subsection{Proof of item 2: analysis of $W_{B,B'}$}
We proceed analogously.  Recall that 
\[W_{B,B'} = \frac{1}{|B \cap B'|}  \sum_{x\in B \cap B'} \theta^{B}_x\theta^{B'}_x,\]
where $\big(\theta^{B}_x\big)_{x\in B} \sim \P(\cdot | Y_B )$ and $\big(\theta^{B'}_x\big)_{x\in B'} \sim \P(\cdot | Y_{B'} )$ independently. Therefore
\begin{align*}
\E[W_{B,B'}^2] &= \frac{1}{|B\cap B'|^2}\E\Big[\E\Big[\Big(\sum_{x\in B \cap B'} \theta^{B}_x\theta^{B'}_x\Big)^2 \Big| Y_{B}, Y_{B'}\Big]\Big]\\ 
&= \frac{1}{|B\cap B'|^2}\sum_{x,y \in B \cap B'}  \E\Big[\E\big[\theta_x\theta_y |Y_{B}\big]\E\big[\theta_x\theta_y |Y_{B'}\big]\Big].
\end{align*}
We will use overlap locking (Proposition~\ref{prop:locking}) to replace the conditional expectations $\E[\theta_x\theta_y |Y_{B}]$ and $\E[\theta_x\theta_y |Y_{B'}]$  in the above expression by $\E[\theta_x\theta_y |Y_{B \cup B'}]$ where $Y_{B\cup B'} = \{Y_{B} , Y_{B'}\}$ is the union of $Y_{B}$ and $Y_{B'}$. 
Now, let $(\theta^{B\cup B'}_x)_{x \in B \cup B'} \sim \P(\cdot | Y_{B \cup B'})$ be drawn from the two-block posterior conditionally independently from everything else, and let
 \[\tilde{M}_{B \cap B'} := \frac{1}{|B\cap B'|}\sum_{x \in B \cap B'} \theta_x^{B\cup B'}\theta_x.\] 
First we have 
\begin{equation}\label{eq:overlap_intra_inter}
\E[\tilde{M}_{B \cap B'}^2]= \frac{1}{|B\cap B'|^2}\sum_{x,y \in B \cap B'} \E\Big[\E\big[\theta_x\theta_y |Y_{B \cup B'}\big]^2\Big] = \E \left\langle R_{1,2}(B\cap B')^2\right\rangle_{B \cup B'}.
\end{equation}

Second, we since all variables are bounded, a simple triangle inequality implies
\begin{align*}
\Big|\E[W_{B,B'}^2]  - \E[\tilde{M}_{B \cap B'}^2]\Big| &\le \frac{1}{|B\cap B'|^2}\sum_{x,y \in B \cap B'} \Big\{\E\Big[\Big|\E\big[\theta_x\theta_y |Y_{B}\big] - \E\big[\theta_x\theta_y |Y_{B \cup B'}\big]\Big|\Big]\\
&\hspace{3.5cm}+\E\Big[\Big|\E\big[\theta_x\theta_y |Y_{B'}\big] - \E\big[\theta_x\theta_y |Y_{B \cup B'}\big]\Big|\Big]\Big\}.
\end{align*}
We use Jensen's inequality and the fact that $B$ and $B'$ play symmetric roles to write 
\begin{align*}
&\le \frac{2}{|B\cap B'|^2}\sum_{x,y \in B \cap B'} \E\Big[\Big(\E\big[\theta_x\theta_y |Y_{B}\big] - \E[\theta_x\theta_y |Y_{B \cup B'}\big]\Big)^2\Big]^{1/2}\\
&\le \Big(\frac{4}{|B\cap B'|^2}\sum_{x,y \in B \cap B'} \E\Big[\Big(\E\big[\theta_x\theta_y |Y_{B}\big] - \E\big[\theta_x\theta_y |Y_{B \cup B'}\big]\Big)^2\Big]\Big)^{1/2}.
\end{align*}
Since $ Y_{B}  \subset Y_{B \cup B'}$, using iterated expectations, we see that 
\[\E\big[\big(\E[\theta_x\theta_y |Y_{B}] - \E[\theta_x\theta_y |Y_{B \cup B'}]\big)^2\big] = \E\big[\E[\theta_x\theta_y |Y_{B\cup B'}]^2\big] - \E\big[\E[\theta_x\theta_y |Y_{B}]^2\big].\] 
It follows that 
\begin{equation}\label{eq:overlap_difference}
\Big|\E[W_{B,B'}^2]  - \E[\tilde{M}_{B \cap B'}^2]\Big|  \le 2 \Big( \E\big\langle R_{1,2}(B \cap B')^2\big\rangle_{B\cup B'}-\E\big\langle R_{1,2}(B \cap B')^2\big\rangle_{B}\Big)^{1/2}.
\end{equation}
By virtue of Proposition~\ref{prop:locking}, we have on the one hand, $R_{1,2}(B \cap B')$ and $R_{1,2}(B)$ lock together under the one-block posterior on $B$,
 and on the other hand, $R_{1,2}(B \cap B')$ and $R_{1,2}(B \cup B')$ lock together under the two-block posterior. More precisely, for almost all $t\in [0,1]$ and all $\eta \in [0,\eta_0]$, it holds 
\begin{align*}
&\Big|\E\big\langle R_{1,2}(B \cap B')^2\big\rangle_{B} - \E\big\langle R_{1,2}(B)^2\big\rangle_{B}\Big| \xrightarrow[\scL \to \infty]{} 0,~~~~ \mbox{and}~~\\
&\Big|\E\big\langle R_{1,2}(B \cap B')^2\big\rangle_{B\cup B'} - \E\big\langle R_{1,2}(B \cup B')^2\big\rangle_{B\cup B'}\Big| \xrightarrow[\scL \to \infty]{} 0.
\end{align*}
Furthermore, we know that $\E\big\langle R_{1,2}(B)^2\big\rangle_{B} \to {q^{\bullet}_1}^2(\delta,\eta)$ and $\E\big\langle R_{1,2}(B \cup B')^2\big\rangle_{B\cup B'} \to {q^{\bullet}_2}^2(\delta,\eta)$ from Corollary~\ref{cor:overlap_conv} for all except countably many $\eta \in [0,\eta_0]$. Now from Lemma~\ref{lem:maximizers_are_close}, $q^{\bullet}_1-q^{\bullet}_2 \to 0$ as $\eta \to 0$. We deduce from this and the bound~\eqref{eq:overlap_difference} that for almost every $t\in[0,1]$,
\[\lim_{\eta \to 0}\lim_{\scL\to \infty} \Big|\E[W_{B,B'}^2]  - \E[\tilde{M}_{B \cap B'}^2]\Big| = 0,\]
and Eq.\eqref{eq:overlap_intra_inter} implies 
\[\lim_{\eta \to 0}\lim_{\scL\to \infty} \E[\tilde{M}_{B \cap B'}^2] = \lim_{\eta \to 0} {q^{\bullet}_2}^2(\delta,\eta) = q_{\star\star}^2(\delta).\]
This establishes the first claim of item 2.

As for the variance, it suffices to consider the fourth moment of $W_{B,B'}$. We proceed similarly to the case of $M_B$: 
 \[\E[W_{B,B'}^4] = \frac{1}{|B\cap B'|^4}\sum_{x,y,z,w \in B\cap B'} \E\Big[\big[\theta_x\theta_y \theta_z\theta_w|Y^{\delta,\eta}_{B}\big] \cdot \E \big[\theta_x\theta_y \theta_z\theta_w|Y^{\delta,\eta}_{B'}\big]\Big].\]
We use Lemma~\ref{lem:decoupling_variance} (applied with $A=B$ then $A=B'$) to decouple the pairs $x,y$ and $w,z$: 
 \begin{align*}
& \E[W_{B,B'}^4] = \\ &\frac{1}{|B\cap B'|^4}\sum_{x,y,z,w \in B\cap B'} \E\Big[\E \big[\theta_x\theta_y |Y^{\delta,\eta}_{B}\big] \E \big[\theta_x\theta_y |Y^{\delta,\eta}_{B'}\big] \E\big[ \theta_z\theta_w|Y^{\delta,\eta}_{B}\big] \E\big[ \theta_z\theta_w|Y^{\delta,\eta}_{B'}\big]\Big] +o_{\scL}(1),
 \end{align*}
 where $o_{\scL}(1) \to 0$ as $\scL\to \infty$ for almost every $\eta$.
Now we repeat the argument used in the analysis of $\E[W_{B,B'}^2]$ to argue that the above is
\[\frac{1}{|B\cup B'|^4}\sum_{x,y,z,w \in B\cup B'} \E\Big[\E \big[\theta_x\theta_y |Y_{B\cup B'}\big]^2 \E \big[ \theta_z\theta_w|Y_{B\cup B'}\big]^2\Big]+\mbox{error},\]
where $\mbox{error} \to 0$ as $\scL \to \infty$ then $\eta\to 0$, and $Y_{B\cup B'} = \{Y_{B}, Y_{B'}\}$ is as defined above. 
We use Proposition~\ref{lem:decoupling_variance} to merge the pairs $x,y$ and $w,z$. The above becomes
\[\frac{1}{|B\cup B'|^4}\sum_{x,y,z,w \in B\cup B'} \E\Big[\E \big[\theta_x\theta_y\theta_z\theta_w|Y_{B\cup B'}\big]^2\Big]+\mbox{error}.\] 
The main term in the above display is equal to $\E[M_{B\cup B'}^4]$. Here the same analysis used for $\E[M_{B}^4]$ in the previous subsection applies as well. Therefore $\E[M_{B\cup B'}^4]$ converges to $q_{\star\star}^4(\delta)$. We conclude that $\var(W_{B,B'}^2)\to 0$.

\subsection{Proof of item 3: analysis of $W_{B,B'}M_BM_{B'}$}
We first perform a preliminary computation and then sketch the argument, which we then execute in three steps. 
Let $Y_{B \cup B'} = \{Y_{B},Y_{B'}\}$ where as in the analysis of $\E[M_{B}^2]$, we use the short-hand $Y_B = \{Y^{\delta}_B , Y^{\eta}_B\}$. Then
\begin{align}\label{eq:triangle_overlap}
{|B|\cdot |B'| \cdot |B \cap B'|} & \cdot \E[W_{B,B'} M_B M_{B'}] \nonumber\\
&=\E\Big[\E\Big[\Big(\sum_{x\in B \cap B'} \theta^{B}_x\theta^{B'}_x\Big)\cdot \Big(\sum_{y\in B} \theta^{B}_y \theta_y\Big)\cdot \Big(\sum_{z\in B'}\theta^{B'}_z \theta_z\Big) ~\Big|~ Y_{B \cup B'}\Big]\Big] \nonumber\\
&=\sum_{x \in B \cap B'} \sum_{y\in B} \sum_{z \in B'} \E\Big[\E\big[\theta^{B}_x\theta^{B'}_x\theta^{B}_y \theta_y\theta^{B'}_z\theta_z |Y_{B \cup B'}\big]\Big]\nonumber\\
&=\sum_{x \in B \cap B'} \sum_{y\in B} \sum_{z \in B'} \E\Big[\E\big[\theta_x \theta_y | Y_B\big] \E\big[\theta_x\theta_z|Y_{B'}\big] \E\big[\theta_y \theta_z|Y_{B \cup B'}\big]\Big].
\end{align}
In this case the strategy is to replace the conditional expectations $\E\big[\theta_x \theta_y | Y_{B}\big]$ and $\E\big[\theta_x\theta_z|Y_{B'}\big]$ in the right-hand side with  $\E\big[\theta_x \theta_y | Y_{B \cup B'}\big]$ and $\E\big[\theta_x\theta_z|Y_{B \cup B'}\big]$ respectively.
Once we succeed at this operation, $\E[W_{B,B'} M_B M_{B'}]$ is approximated by the quantity 
\begin{align*}
\frac{1}{|B|\cdot |B'| \cdot |B \cap B'|}\sum_{x\in B\cap B',y\in B,z\in B' } \E\Big[\E\big[\theta_x \theta_y | Y_{B \cup B'}\big] \E\big[\theta_x\theta_z|Y_{B \cup B'}\big] \E[\theta_y \theta_z | Y_{B \cup B'}\big] \Big].
\end{align*}
Notice that the above sum is over different subsets of $B \cup B'$. We then exploit the fact that local overlaps lock under the two-block posterior  to replace the above with a homogenous sum over all vertices of $B \cup B'$: 
\[\frac{1}{|B \cup B'|^3}\sum_{x,y,z \in B\cup B'} \E\Big[\E\big[\theta_x \theta_z | Y_{B \cup B'}\big] \E\big[\theta_y\theta_z|Y_{B \cup B'}\big] \E[\theta_x \theta_y | Y_{B \cup B'}\big] \Big].\]
We remark that this is the trace of the third power of a certain matrix $\chi$, which we call  \emph{susceptibility matrix} of the block $B \cup B'$, defined by 
\begin{equation}\label{eq:susceptibility}
\chi_{x,y} := \frac{1}{|B \cup B'|}\E\big[\theta_x \theta_y | Y_{B \cup B'}\big],~~~ x,y \in B \cup B'.
\end{equation}
We will then argue that $\chi$ is approximately a \emph{rank-one} matrix, in the sense that its Frobenius and operator norms are equal in the limit $\scL \to \infty$ then $\eta \to 0$, with common asymptotic value $q_{\star\star}(\delta)$. Since $\trace (\chi^3)^{1/3}$ is sandwiched between the operator and Frobenius norms, a concentration argument finally yields that $\E[W_{B,B'} M_B M_{B'}]$ converges to $q_{\star\star}^{3}(\delta)$.   

\paragraph{Step 1: replacing the conditional expectations.}
The first step is to replace $Y_{B}$ and $Y_{B'}$ by $Y_{B \cup B'}$ in the conditional expectations in~\eqref{eq:triangle_overlap}. Since $Y_B \subset Y_{B \cup B'}$, we have
\begin{align*}
\frac{1}{|B| \cdot |B \cap B'|}&\sum_{x\in B \cap B',y\in B } \E\Big[\big(\E\big[\theta_x \theta_y | Y_{B}\big] - \E\big[\theta_x \theta_y | Y_{B\cup B'}\big]\big)^2\Big]\\
&= \frac{1}{|B| \cdot |B \cap B'|}\sum_{x\in B \cap B',y\in B} \E\Big[\E\big[\theta_x \theta_y | Y_{B\cup B'}\big]^2\Big] - \E\Big[\E\big[\theta_x \theta_y | Y_{B}\big]^2\Big]\\
&=\E\big\langle R_{1,2}(B)  R_{1,2}(B \cap B')\big\rangle_{B \cup B'} - \E\big\langle R_{1,2}(B)  R_{1,2}(B \cap B')\big\rangle_{B}.
\end{align*}
Overlap locking under both the one-block and two-block posteriors, as given in Proposition~\ref{prop:locking}, implies that the above is equal to
\[\E\big\langle R_{1,2}(B \cup B')^2\big\rangle_{B \cup B'} - \E\big\langle R_{1,2}(B)^2\big\rangle_{B} + o_{\scL}(1),\]
where $o_{\scL}(1) \to 0$ as $\scL \to \infty$ for almost all $t\in [0,1]$ and all $\eta \in [0,\eta_0]$.  
Now we use Corollary~\ref{cor:overlap_conv} and Lemma~\ref{lem:maximizers_are_close} to deduce that for almost all $t$,
\[\lim_{\eta\to0} \lim_{\scL\to 0}\E\big\langle R_{1,2}(B \cup B')^2\big\rangle_{B \cup B'} - \E\big\langle R_{1,2}(B)^2\big\rangle_{B} =0.\]
The same result obviously holds if $B$ is replaced by $B'$ in the above argument. Therefore we can substitute $B$ and $B'$ with $B \cup B'$ in the conditional expectations in the expression of $\E[W_{B,B'} M_B M_{B'}]$, Eq.\eqref{eq:triangle_overlap}: as $\scL \to 0$ then $\eta \to 0$ we have
\begin{align*}
\Big|&\E[W_{B,B'} M_B M_{B'}] \\ &- \frac{1}{|B|\cdot |B'| \cdot |B \cap B'|} \sum_{x\in  B \cap B', y\in B, z\in B'}  \hspace{-.5cm}\E\Big[\E\big[\theta_x \theta_y | Y_{B\cup B'}\big] \E\big[\theta_x\theta_z|Y_{B\cup B'}\big] \E[\theta_y \theta_z |Y_{B\cup B'}\big]  \Big] \Big| \longrightarrow 0.
\end{align*}

\paragraph{Step 2: replacing the domain of summation.}
The next step is to replace the domains of summation in the above sum to $B \cup B'$ for the three indices. The above sum can be written as
\[\E\big\langle R_{1,2}(B) R_{2,3}(B \cap B') R_{3,1}(B')\big\rangle_{B \cup B'}.\] 
Since each of the three overlaps above is locked to $R_{a,b}(B \cup B')$ for the appropriate replica pair $(a,b)$ (this is again a straightforward application of Proposition~\ref{prop:locking}, the above expression is equal to
\[\E\big\langle R_{1,2}(B \cup B') R_{2,3}(B \cup B') R_{3,1}(B \cup B')\big\rangle_{B \cup B'} + o_{\scL}(1),\]
where again $o_{\scL}(1) \to 0$ as $\scL \to \infty$ for almost all $t \in [0,1]$ and all $\eta \in [0,\eta_0]$.  
As explained in the sketch of the proof, the above triangular product can be expressed as follows:
 \begin{align*}
 \E\big\langle R_{1,2}(B \cup B') & R_{2,3}(B \cup B') R_{3,1}(B \cup B')\big\rangle_{B \cup B'} \\
 &= \frac{1}{|B \cup B'|^3}\sum_{x,y,z \in B\cup B'} \E\Big[\E\big[\theta_x \theta_z | Y_{B \cup B'}\big] \E\big[\theta_y\theta_z|Y_{B \cup B'}\big] \E[\theta_x \theta_y | Y_{B \cup B'}\big] \Big]\\
 & = \E \trace(\chi^3),
 \end{align*}
 where $\chi$ is the susceptibility matrix~\eqref{eq:susceptibility}.
 
\paragraph{Step 3: structure of the susceptibility matrix.} Next, we prove that $\E \trace(\chi^3) \to q_{\star\star}^3(\delta)$ for almost all $t \in [0,1]$.  
We proceed by showing upper and lower bounds separately. For the upper bound, we observe that as $\scL \to \infty$ then $\eta\to 0$, we have
\[\E\trace(\chi^2) = \E \left\langle R_{1,2}(B \cup B')^2\right\rangle_{B \cup B'} \longrightarrow q_{\star\star}^2(\delta).\] 
This is obtained through previously employed arguments: Corollary~\ref{cor:overlap_conv} and Lemma~\ref{lem:maximizers_are_close}.
Moreover, by monotonicity of the $\ell_p$ norm and the fact that $\chi$ is a symmetric and positive semi-definite (PSD) matrix,
\[\trace(\chi^3)^{1/3} \le \trace(\chi^2)^{1/2}.\]
Thus, $\E[\trace(\chi^3)] \le \E[\trace(\chi^2)^{3/2}]$.
Now, since
\[\E\big[\trace(\chi^2)^{2}\big] - \E\big[\trace(\chi^2)\big]^{2} =  \E\big[M_B^4\big] - \E\big[M_B^2\big]^2 = \var(M_B^2),\]
which, according to Theorem~\ref{thm:everything}, item 1, converges to zero as $\scL \to 0$ then $\eta \to 0$, we have 
\[\limsup_{\eta \to 0} \limsup_{\scL\to \infty} \Big(\E[\trace(\chi^2)^{3/2}] - \E[\trace(\chi^2)]^{3/2}\Big) =0.\] 
Further, since $\E[\trace(\chi^2)]^{3/2} \to q_{\star\star}^3(\delta)$ as $\scL\to \infty$ then $\eta\to 0$, this establishes the upper bound 
\begin{equation*}
\limsup_{\eta \to 0} \limsup_{\scL\to \infty} \E\big[\trace(\chi^3)\big] \le q_{\star\star}^3(\delta).
\end{equation*}
As for the lower bound, we have $\trace(\chi^3) \ge \|\chi\|_{\op}^3$ since $\chi$ is PSD. We will prove that 
\begin{equation}\label{eq:lower_bound_op}
\liminf_{\eta \to 0} \liminf_{\scL\to \infty} \E\|\chi\|_{\op} \ge q_{\star\star}(\delta).
\end{equation}
This implies a matching lower bound through Jensen's inequality.
We have the following characterization of the operator norm for a PSD matrix:
\[\|\chi\|_{\textup{op}} = \sup_{|u|=1} u^\intercal \chi u,\] 
where $|\cdot|$ is the $\ell_2$ norm of a vector.
Now it suffices to exhibit a candidate unit vector $u$ that (approximately) achieves the required bound. 
Our candidate is $u = \hat{u}/|\hat{u}|$, where
\begin{equation}\label{eq:candidate_rankone}
\hat{u}_x = \frac{1}{\sqrt{|B \cup B'|}}\E\big[\theta_x|Y_{B \cup B'}^{\delta,\eta,\lambda}\big],~~~ x \in B \cup B',
\end{equation}
where we have added additional side information from the scalar Gaussian noise channel~\eqref{eq:gaussian_channel} with SNR $\lambda>0$ on $B \cup B'$. We show next that as $\lambda \to 0$, the (sequence of) vector(s) $u$ achieves the supremum. 
We use the decoupling Lemma~\ref{lem:decoupling_scalar} (with $A = B \cup B'$) to obtain 
\begin{align*}
 \E\big[\hat{u}^\intercal \chi \hat{u}\big] &= \frac{1}{|B \cup B'|^2}\sum_{x,y\in B\cup B'} \E\Big[\E\big[\theta_x \theta_y | Y^{\delta,\eta}_{B \cup B'}\big]\E\big[\theta_x|Y_{B \cup B'}^{\delta,\eta,\lambda}\big]\E\big[\theta_y|Y_{B \cup B'}^{\delta,\eta,\lambda}\big]\Big]\\
 &= \frac{1}{|B \cup B'|^2}\sum_{x,y\in B\cup B'} \E\Big[\E\big[\theta_x \theta_y | Y^{\delta,\eta}_{B \cup B'}\big]\E\big[\theta_x\theta_y|Y_{B \cup B'}^{\delta,\eta,\lambda}\big]\Big] + o_{\scL}(1),
\end{align*}
for almost every $\lambda>0$.
Moreover, according to Lemma~\ref{lem:removing_scalar} we can remove the scalar side information and incur a small error: 
\begin{align*}
\E\big[\hat{u}^\intercal \chi \hat{u}\big] &= \frac{1}{|B \cup B'|^2}\sum_{x,y\in B\cup B'} \E\Big[\E\big[\theta_x \theta_y | Y^{\delta,\eta}_{B \cup B'}\big]^2\Big]
+ \mbox{error}\\
&= \E\trace(\chi^2)+ \mbox{error},
\end{align*}  
where $\mbox{error} \to 0$ as $\scL\to \infty$ and then $\lambda \to 0$ for almost every $\eta>0$. We have already established that $\E\trace(\chi^2) \to q_{\star\star}^2(\delta)$ as $\scL\to \infty$ and $\eta\to 0$. Now it remains to analyze $|\hat{u}|$. 
We have
\[\E\big[|\hat{u}|^2\big] = \frac{1}{|B \cup B'|}\sum_{x\in B\cup B'} \E\Big[\E\big[\theta_x | Y^{\delta,\eta,\lambda}_{B \cup B'}\big]^2\Big] = \varphi^{\sv}_{B \cup B'}(\delta,\eta,\lambda).\]
Now we examine the fourth moment of $|\hat{u}|$: 
\begin{align*}
\E\big[|\hat{u}|^4\big] &= \frac{1}{|B \cup B'|^2}\sum_{x,y\in B\cup B'} \E\Big[\E\big[\theta_x | Y^{\delta,\eta,\lambda}_{B \cup B'}\big]^2\E\big[\theta_y | Y^{\delta,\eta,\lambda}_{B \cup B'}\big]^2\Big]\\
&= \frac{1}{|B \cup B'|^2}\sum_{x,y\in B\cup B'} \E\Big[\E\big[\theta_x \theta_y| Y^{\delta,\eta,\lambda}_{B \cup B'}\big]^2\Big]+\mbox{error}_1\\
&= \varphi^{\se}_{B \cup B'}(\delta,\eta,\lambda)+\mbox{error}_1, 
\end{align*}
where, similarly to previous arguments, $\mbox{error}_1 \to 0$ for almost every $\lambda>0$ and all $\eta>0$. 
Now observe that by virtue of Lemma~\ref{lem:edge_vertex_overlap}, 
\[\varphi^{\se}_{B \cup B'}(\delta,\eta,\lambda) - \varphi^{\sv}_{B \cup B'}(\delta,\eta,\lambda)^2 \to 0,~~ \mbox{as}~~ \scL \to \infty.\] 
Thus $\var(|\hat{u}|^2) \to 0$.
As a consequence, we have 
\begin{align*}
 \E\big[u^\intercal \chi u\big]  &= \E\Big[\frac{\hat{u}^\intercal \chi \hat{u}}{|\hat{u}|^2}\Big]
 = \E\Big[\frac{\hat{u}^\intercal \chi \hat{u}}{\E[|\hat{u}|^2]}\Big] + \mbox{error},
\end{align*}
where 
\begin{align*}
|\mbox{error}| &\le \E\left[\frac{|\hat{u}^\intercal \chi \hat{u}| \cdot |\Delta|}{\E[|\hat{u}|^2] \cdot |\hat{u}|^2}\right],~~~\Delta= |\hat{u}|^2 - \E[|\hat{u}|^2]\\
&\le \frac{\E[|\Delta| \indi\{|\hat{u}|<\epsilon\}]}{\E[|\hat{u}|^2]} +  \E\left[\frac{|\hat{u}^\intercal \chi \hat{u}| \cdot |\Delta|}{\E[|\hat{u}|^2] \cdot |\hat{u}|^2}\indi\{|\hat{u}|\ge\epsilon\}\right], ~~~ (\mbox{for } \epsilon>0)\\
&\le 2 \P(|\hat{u}|<\epsilon) +\frac{\E[|\Delta|]}{ \epsilon^2 \E[|\hat{u}|^2] }.
\end{align*}
By Chebychev's inequality, $\P(|\hat{u}|<\epsilon) \le \var(|\hat{u}|^2)/(\E[|\hat{u}|^2]-\epsilon^2)$ for $\epsilon < \E[|\hat{u}|^2]^{1/2}$. On the other hand $\E[|\Delta|] \le \var(|\hat{u}|^2)^{1/2}$. 
When $\delta>\delta_c$,  $q_{\star\star}(\delta)>0$ and  $\E[|\hat{u}|^2]>0$ for $\scL$ large enough, and the above tends to zero.
Therefore, we have $\E[u^\intercal \chi u]  \to q_{\star\star}(\delta)$ when $\delta>\delta_c$ as $\scL\to \infty$ then $\eta \to 0$. This implies the desired lower bound~\eqref{eq:lower_bound_op}.

Putting the upper and lower bounds  together we deduce that 
\[\lim_{\eta\to 0} \lim_{\scL\to \infty} \E\trace (\chi^3) = q_{\star\star}^3(\delta).\]

\subsection{Proof of Theorem~\ref{thm:lower_bound}}
\label{sec:proof_13}
For any $T$ with output values in $\{\pm 1\}$ and $L_n \ge 1$,  Cauchy-Schwarz inequality implies 
\begin{align*}
|\risk_{\Lambda_n}(T)| &= \frac{1}{|\Lambda_n|^2} \left|\sum_{u,v\in \Lambda_n} \E\Big[T_{uv}(Y_{\Lambda_n}^{\delta,\eta,L_n})\E\big[\theta_u\theta_v|Y_{\Lambda_n}^{\delta}, Y_{\Lambda_n}^{\eta,L_n}\big]\Big]\right|\\
&\le \Big(\frac{1}{|\Lambda_n|^2} \sum_{u,v\in \Lambda_n} \E\Big[\E\big[\theta_u\theta_v|Y_{\Lambda_n}^{\delta}, Y_{\Lambda_n}^{\eta,L_n}\big]^2\Big]\Big)^{1/2}.
\end{align*}   
We add auxiliary GOE side information with SNR parameter $\eta_2$ and interaction range $L = 2n+1$, (i.e., there is a measurement $Y_{uv}^{\eta_2}$ available for any pair $(u,v)$ in $\Lambda_n$) and let 
\[\tilde{\varphi}_{\Lambda_n}^{\se} := \frac{1}{|\Lambda_n|^2} \sum_{u,v\in \Lambda_n} \E\Big[\E\big[\theta_u\theta_v|Y_{\Lambda_n}^{\delta}, Y_{\Lambda_n}^{\eta,L_n},Y_{\Lambda_n}^{\eta_2}\big]^2\Big].\]
 We clearly have $|\risk_{\Lambda_n}(T)| \le \big(\tilde{\varphi}_{\Lambda_n}^{\se}\big)^{1/2}$. Now recall the definition of $\varphi_{\Lambda_n}^{\se}$:
 \[\varphi_{\Lambda_n}^{\se}(\delta,\eta_2) := \frac{1}{|\Lambda_n|^2} \sum_{u,v\in \Lambda_n} \E\Big[\E\big[\theta_u\theta_v|Y_{\Lambda_n}^{\delta},Y_{\Lambda_n}^{\eta_2}\big]^2\Big].\]
 The crucial point is that $\varphi_{\Lambda_n}^{\se}(\delta,\eta_2)$ and  $\tilde{\varphi}_{\Lambda_n}^{\se}(\delta,\eta,\eta_2)$ are close when $\eta$ is small:  
 \begin{lemma}
For almost all $\eta_2>0$,
\[\lim_{\eta \to 0}\liminf_{n\to \infty} (\tilde{\varphi}_{\Lambda_n}^{\se}- \varphi_{\Lambda_n}^{\se}) = 0.\]
\end{lemma}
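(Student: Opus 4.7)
The approach is to identify both $\tilde{\varphi}^{\se}_{\Lambda_n}$ and $\varphi^{\se}_{\Lambda_n}$ as $\eta_2$-derivatives of appropriate Bayes free energies, integrate on $[0,\eta_2^0]$, control the resulting free-energy difference by a uniform-in-$n$ Lipschitz estimate in $\eta$, and finally upgrade this $\eta_2$-averaged bound to almost-everywhere pointwise convergence via Fatou and monotone convergence.

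First, define $\tilde{f}_n(\eta,\eta_2):=\tfrac{1}{|\Lambda_n|}\E\log Z_n$, the normalized log-partition function of the posterior $\P(\cdot\,|\,Y^\delta_{\Lambda_n},Y^{\eta,L_n}_{\Lambda_n},Y^{\eta_2}_{\Lambda_n})$, and set $f_n(\eta_2):=\tilde{f}_n(0,\eta_2)$. A Gaussian integration by parts together with the Nishimori identity (exactly as in~\eqref{eq:derivative_f_Bn_eta}) gives $\partial_{\eta_2}\tilde{f}_n=\tilde{\varphi}^{\se}_{\Lambda_n}/4$ and $\partial_{\eta_2}f_n=\varphi^{\se}_{\Lambda_n}/4$ a.e., the derivatives existing a.e.\ by convexity in $\eta_2$. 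Integrating over $[0,\eta_2^0]$ yields
\[
\int_0^{\eta_2^0}\!\bigl(\tilde{\varphi}^{\se}_{\Lambda_n}-\varphi^{\se}_{\Lambda_n}\bigr)\,\rmd\eta_2
=4\bigl[\tilde{f}_n(\eta,\eta_2^0)-\tilde{f}_n(0,\eta_2^0)\bigr]-4\bigl[\tilde{f}_n(\eta,0)-\tilde{f}_n(0,0)\bigr].
\]

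Second, I will establish a uniform-in-$n$ Lipschitz estimate for $\eta\mapsto\tilde{f}_n(\eta,\eta_2)$. Another Gaussian integration by parts, identical in flavor, gives
\[
\partial_{\eta}\tilde{f}_n(\eta,\eta_2)=\frac{1}{2L_n^d|\Lambda_n|}\sum_{\substack{u,v\in\Lambda_n\\ |u-v|_\infty\le L_n}}\E\bigl[\E[\theta_u\theta_v\mid\cdot]^2\bigr]\;\le\;\frac{(2L_n+1)^d}{2L_n^d}\;\le\;\frac{3^d}{2},
\]
uniformly in $n,\eta,\eta_2$. The SNR normalization $\sqrt{\eta/L_n^d}$ in~\eqref{eq:GOE_side_info} is precisely what makes the bound uniform: the number of pairs within $\ell_\infty$-range $L_n$ scales like $|\Lambda_n|\cdot L_n^d$, but the derivative picks up an offsetting factor $1/L_n^d$. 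Consequently $|\tilde{f}_n(\eta,\eta_2)-f_n(\eta_2)|\le (3^d/2)\,\eta$ for every $\eta_2$, and the display above gives
\[
0\;\le\;\int_0^{\eta_2^0}\!\bigl(\tilde{\varphi}^{\se}_{\Lambda_n}-\varphi^{\se}_{\Lambda_n}\bigr)\,\rmd\eta_2\;\le\;4\cdot 3^d\,\eta,
\]
nonnegativity coming from the fact that conditioning on strictly more information cannot decrease the Bayes-optimal squared pair correlation.

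Finally, for each fixed $n$ the integrand $\tilde{\varphi}^{\se}_{\Lambda_n}(\eta,\eta_2)-\varphi^{\se}_{\Lambda_n}(\eta_2)$ is nonnegative and nondecreasing in $\eta$, so $h_\eta(\eta_2):=\liminf_n\bigl(\tilde{\varphi}^{\se}_{\Lambda_n}-\varphi^{\se}_{\Lambda_n}\bigr)$ inherits both properties. Fatou's lemma applied to the above gives $\int_0^{\eta_2^0}h_\eta\,\rmd\eta_2\le 4\cdot 3^d\,\eta$, and monotone convergence as $\eta\downarrow 0$ yields $\int_0^{\eta_2^0}h_0\,\rmd\eta_2=0$ with $h_0:= \lim_{\eta\downarrow 0}h_\eta\ge 0$. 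Hence $h_0(\eta_2)=0$ for almost every $\eta_2\in(0,\eta_2^0)$; since $\eta_2^0>0$ was arbitrary, the lemma follows. The only genuinely technical step is the uniform-in-$n$ Lipschitz bound in $\eta$, but the SNR normalization in~\eqref{eq:GOE_side_info} renders it immediate; the remainder is a standard I-MMSE plus Fatou argument.
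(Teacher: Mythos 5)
Your proof is correct and follows essentially the same route as the paper's: identify both correlation functionals as $\eta_2$-derivatives of free energies, integrate, use the uniform-in-$n$ Lipschitz bound in $\eta$ (coming from the $1/L_n^d$ normalization of the side information), and conclude via Fatou and monotonicity as $\eta\to 0$. The only differences are immaterial constants (your Lipschitz constant $3^d/2$ is in fact the more careful count of $\ell_\infty$-neighbors) and your explicit monotone-convergence step, which the paper leaves implicit.
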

\begin{proof}
The proof is similar to that of Lemma~\ref{lem:removing_scalar}. Let $\tilde{f}_n(\delta,\eta,L_n,\eta_2) =\frac{1}{|\Lambda_n|}\E\log Z_{\Lambda_n}$ where $Z_{\Lambda_n}$ is the partition function of $\P(\cdot\, | Y_{\Lambda_n}^{\delta}, Y_{\Lambda_n}^{\eta,L_n},Y_{\Lambda_n}^{\eta_2})$. Then $\eta \mapsto \tilde{f}_n(\delta,\eta,L_n,\eta_2)$ is $\frac{1}{2}$-Lipschitz as seen by taking the $\eta$-derivative of $\tilde{f}_n$:
\[\frac{\rmd}{\rmd \eta} \tilde{f}_n = \frac{1}{2|\Lambda_n| L_n^d}\sum_{u \in \Lambda_n} \sum_{\underset{|u-v|\le L_n}{v\in \Lambda_n}} \E\Big[\E\big[\theta_u\theta_v|Y_{\Lambda_n}^{\delta}, Y_{\Lambda_n}^{\eta,L_n},Y_{\Lambda_n}^{\eta_2}\big]^2\Big] \le \frac{1}{2}. \]  
On the other hand, the $\eta_2$-derivative of $\tilde{f}_n$ is $\frac{1}{2}\tilde{\varphi}_{\Lambda_n}^{\se}(\delta,\eta,\eta_2)$. Therefore, for all $\eta_2$,
\begin{align*}
\int_0^{\eta_2} (\tilde{\varphi}_{\Lambda_n}^{\se}(\delta,\eta,\eta_2')- \varphi_{\Lambda_n}^{\se}(\delta,\eta_2')) \rmd \eta_2' &= 2(\tilde{f}_n(\delta,\eta,L_n,\eta_2)-\tilde{f}_n(\delta,\eta,L_n,0)) \\
&~~~- 2(\tilde{f}_n(\delta,0,L_n,\eta_2)-\tilde{f}_n(\delta,0,L_n,0))\\
&=2(\tilde{f}_n(\delta,\eta,L_n,\eta_2)-\tilde{f}_n(\delta,0,L_n,\eta_2))\\
&~~~- 2(\tilde{f}_n(\delta,\eta,L_n,0)-\tilde{f}_n(\delta,0,L_n,0))\\
&\le 2\eta.
\end{align*}
We used the Lipschitz property of $\tilde{f}_n$ in $\eta$ to obtain the last line. Since the integrand is nonnegative, Fatou's lemma implies
\[\int_0^{\eta_2} \liminf_{n\to \infty}(\tilde{\varphi}_{\Lambda_n}^{\se}(\delta,\eta,\eta_2')- \varphi_{\Lambda_n}^{\se}(\delta,\eta_2')) \rmd \eta_2' \le 2\eta.\] 
Now, letting $\eta \to 0$ implies the result.
\end{proof}
Given the above result, we have for almost all $\eta_2>0$,
\[\limsup_{\eta \to 0} \liminf_{n \to \infty} |\risk_{\Lambda_n}(T)| \le \lim_{n \to \infty} \big(\varphi_{\Lambda_n}^{\se}(\delta,\eta_2)\big)^{1/2}.\]
Since $\underset{\eta_2\to 0}{\lim}\,\underset{n\to \infty}{\lim} \varphi_{\Lambda_n}^{\se}(\delta,\eta_2) = 0$ when $\delta<\delta_c$ we have
\[\lim_{\eta \to 0} \liminf_{n \to \infty} |\risk_{\Lambda_n}(T)| = 0.\]

\section{Limits of the free energies}
\label{sec:free_energy}
In this section we prove convergence of the various free energies defined in Section~\ref{sec:posteriors}, i.e., we prove Propositions~\ref{prop:limit_Bn_scalar}, \ref{prop:limit_Bn_GOE}, \ref{prop:limits_phi1_phi2} and~\ref{prop:additional_scalar}.
\subsection{Proof of Proposition~\ref{prop:limit_Bn_scalar}}
We start with the free energy $\fsc_{B_n}(\delta,\lambda)$ in Eq.~\eqref{eq:free_energy_scalar}. The argument is standard and we follow Friedli and Velenik's exposition~\cite[Chapter 3]{friedli2017statistical}. Minor changes need to be made in order to accommodate our setting with disorder and the additional Hamiltonian $H^{\lambda}_{B_n}$ encoding the scalar side information.
We start with the special case where $B_n = D_n = \{0,1,\cdots,2^n\}^d$, we then extend the convergence to a general van Hove sequence. 

Observe that $D_{n+1}$ can be partitioned disjointly into $2^d$ translates of $D_n$; call them $D_{n}^{(1)},\cdots,D_{n}^{(2^d)}$. Moreover, the Hamiltonian~\eqref{eq:hamiltonian_lattice} corresponding to the information $Y^{\delta}_{D_n}$ observed in the edges of the lattice can be decomposed as follows:          
 \begin{align*}
 H_{D_{n+1}}^{\delta}(\theta) &= \beta\sum_{\underset{|u-v|= 1}{u,v\in D_{n+1}}} Y^{\delta}_{uv}\theta_u\theta_v,\\
 &= \sum_{i=1}^{2^d} H_{D_{n}^{(i)}}^{\delta}(\theta) + R,
 \end{align*}
 where $H_{D_{n}^{(i)}}^{\delta}(\theta)$ are defined exactly as in~\eqref{eq:hamiltonian_lattice} (observe that they are identical in law), and $R$ collects the terms corresponding to the edges whose endpoints belong to two different sub-blocks $D_{n}^{(i)}$. Since there are at most $|\partial D_{n+1}| = \beta d 2^{(n+1) (d-1)}$ such edges, and all individual terms in the above Hamiltonian are bounded by one in absolute value, we have $|R| \le \beta d 2^{(n+1) (d-1)}$.  Therefore, denoting by $Z_{D_{n+1}}$ and $Z_{D^{(i)}_{n}}$ the partition functions we have
 \[\prod_{i=1}^{2^d}Z_{D_{n}^{(i)}} \cdot e^{-\beta d 2^{(n+1) (d-1)}} \le Z_{D_{n+1}} \le \prod_{i=1}^{2^d}Z_{D_{n}^{(i)}} \cdot e^{\beta d 2^{(n+1) (d-1)}}.\]   
Since  $\{Z_{D_{n}^{(i)}}: i=1,\cdots,2^d\}$ are all equal in distribution, taking logarithms and then expectations, we get
\[\frac{2^d |D_{n}|}{|D_{n+1}|} \fsc_{D_n} - \beta d \frac{2^{(n+1) (d-1)}}{|D_{n+1}|} \le \fsc_{D_{n+1}} \le \frac{2^d |D_{n}|}{|D_{n+1}|} \fsc_{D_n} + \beta d \frac{2^{(n+1) (d-1)}}{|D_{n+1}|}.\] 
 Since $|D_{n}| = 2^{nd}$ for all $n$, this yields
 \[ |\fsc_{D_{n+1}} -  \fsc_{D_n}| \le \frac{\beta d}{2^{n+1}}.\]
 We deduce from this that the sequence $(\fsc_{D_n})_{n\ge 0}$ is Cauchy: $|\fsc_{D_{m}} -  \fsc_{D_n}| \le \beta d 2^{-n}$ for all $m \ge n$. Hence it converges to a limit $\fsc$ as $n \to \infty$, and the convergence is uniform on any interval $[0,\beta_0]$ with $\beta_0<\infty$. 
 
 Consider now an arbitrary van Hove sequence $(B_{n})_{n \ge 0}$. Fix an integer $k$ and partition $\Z^d$ into disjoint translates of $D_k= \{0,1,\cdots,2^k\}^d$. For each $n$ let $[B]_n$ be the minimal cover of $B_n$ by translates of $D_k$. We have
 \[|\fsc_{B_n} - \fsc| \le |\fsc_{B_n} - \fsc_{[B]_n}|+ |\fsc_{[B]_n} - \fsc_{D_k}| + |\fsc_{D_k} - \fsc|.\]  
Now we bound each one of these three terms. For the first term, we let $\Delta_n = [B]_n \setminus B_n$. We then have
 \[Z_{B_n} \cdot \widetilde{Z}_{\Delta_n} \cdot e^{-2d\beta |\Delta_n|} \le Z_{[B]_n} \le Z_{B_n} \cdot \widetilde{Z}_{\Delta_n} \cdot e^{2d\beta |\Delta_n|},\]
 where 
 \[\widetilde{Z}_{\Delta_n} = 2^{-|\Delta_n|}\sum_{\theta\in \{\pm 1\}^{\Delta_n}} e^{H^{\lambda}_{\Delta_n}(\theta)}.\]
 Therefore, letting $\vartheta =\frac{1}{|\Delta_n|} \E\log \widetilde{Z}_{\Delta_n} = \E\log \cosh(\sqrt{\lambda}z+\lambda)$ with $z\sim N(0,1)$, we have
 \[\Big| \fsc_{[B]_n} - \frac{|B_n|}{|[B]_n|}\fsc_{B_n} - \frac{|\Delta_n|}{|[B]_n|}\vartheta\Big| \le \frac{2d\beta |\Delta_n|}{|[B]_n|}.\]
On the one hand, $[B]_n$ is a minimal cover of $B_n$ by translates of $D_k$, so we have $|\Delta_n| \le |D_k| \cdot |\partial B_n|$. On the other hand $|[B]_n| \ge |B_n|$, and $\fsc_{B_n}$ is uniformly bounded by $2\beta d +\vartheta$. Thus
\[|\fsc_{B_n} - \fsc_{[B]_n}| \le C \frac{|\partial B_n|}{|B_n|} \xrightarrow[n\to \infty]{} 0\]    
for some constant $C = C(\beta,d,k,\lambda)>0$. Convergence is again uniform on $[0,\beta_0] \times[0,\lambda_0]$ for any $\beta_0,\lambda_0>0$.
Next, we address the second term. The same estimates previously conducted show that 
 \begin{align*}
 |\fsc_{[B]_n} - \fsc_{D_k}| &\le \frac{\beta |\partial [B]_n|}{|[B]_n|}
 \le  \frac{\beta |\partial D_k|}{|D_k|} \le \beta d 2^{-k}.
 \end{align*}
 Lastly, we already know that $|\fsc_{D_k} -\fsc| \to 0$ as $k \to \infty$. Now we take $n \to\infty$ then $k \to \infty$ to conclude the proof.

 \subsection{Proof of Proposition~\ref{prop:limit_Bn_GOE}}
 \label{sec:proof_limit_Bn_GOE}
Here we prove convergence of the free energy $f_{B_n}(\delta,\eta)$ with GOE side information with SNR $\eta>0$. The argument relies on Guerra's interpolation method~\cite{guerra2002thermodynamic} and concentration of measure arguments. It is almost identical to the proof in the spiked GOE case with no lattice information ($\delta=0$). We follow the exposition in~\cite{alaoui2018estimation}.    
The lower and upper bounds are proved separately. 
We start with the lower bound. We use the interpolating Hamiltonian
\begin{align*}
H_{s}(\theta) &= H^{\delta}_{B_n}(\theta) + \sum_{u,v\in B_n} \Big(\sqrt{\frac{s\eta}{|B_n|}} Y_{uv}^{s\eta}\theta_{u}\theta_{v} - \frac{s\eta}{2|B_n|} \Big)\\
&~~~+ \sum_{u\in B_n} \Big( \sqrt{(1-s)\eta q} \, y^{\lambda}_{u}\theta_{u}- \frac{(1-s)\eta q}{2} \Big),
\end{align*}
where $q\ge 0$ is a fixed constant to be chosen later, $Y_{uv}^{s\eta} = \sqrt{\frac{s\eta}{|B_n|}} \theta_{0u}\theta_{0v} + Z_{uv}$  and $y^{\lambda} = \sqrt{\lambda} \theta_{0u} +z_u$ with $\lambda = (1-s)\eta q$ and $Z_{uv}$ and $z_{u}$ are independent $N(0,1)$ r.v.'s. From now on we denote the hidden assignment by $(\theta_{0u})_{u \in B_n}$ in order to distinguish it from a generic vector $(\theta_u)_{u \in B_n}$. The interpolating free energy is 
\[\varphi(s) = \frac{1}{|B_n|} \E \log\Big\{2^{-|B_n|}\sum_{\theta \in \{\pm 1\}^{B_n}} e^{H_s(\theta)}\Big\}. \] 
Note that $\varphi(1) = f_{B_n}(\delta,\eta)$. On the other end of the interpolation,
\begin{align*}
\varphi(0) &=  \frac{1}{|B_n|} \E \log\Big\{2^{-|B_n|}\sum_{\theta \in \{\pm 1\}^{B_n}} e^{ H^{\delta}_{B_n}(\theta) +  \underset{u\in B_n}{\sum} ( \sqrt{\eta q} y^{\lambda}_{u}\theta_{u}- \frac{\eta q}{2})}\Big\} \\
&= \fsc_{B_n}(\delta,\eta q).
\end{align*}

We compute the derivative of $\varphi$ using gaussian integration by parts to obtain 
\[\varphi'(s) = -\frac{\eta}{4} \E \big\langle(R_{1,2}(B_n) - q)^2 \big\rangle_s + \frac{\eta}{2} \E \big\langle(R_{1,0}(B_n) - q)^2 \big\rangle_s - \frac{\eta q^2}{4},\]
where $\langle \cdot \rangle_s$ is the Gibbs measure associated to the Hamiltonian $H_s$.
(Here and from now on, $R_{1,0}(B_n):= \frac{1}{|B_n|} \sum_{u \in B_n} \theta_{u}\theta_{0u}$.)
By Bayes' rule $R_{1,2}(B_n)$ and $R_{1,0}(B_n)$ are identical in law, and we obtain
\[\varphi'(s) = \frac{\eta}{4} \E \big\langle(R_{1,2}(B_n) - q)^2 \big\rangle_s - \frac{\eta q^2}{4}.\]
Since the first in the right-hand side in the above display is non-negative, we obtain the lower bound $\varphi'(s) \ge - \frac{\eta q^2}{4}$ for all $s$. Integrating w.r.t.\ $s$ yields the lower bound   
\[f_{B_n}(\delta,\eta)  \ge  \fsc_{B_n}(\delta, \eta q) - \frac{\eta q^2}{4},\]
valid for all $n$ and all $q$.
From Proposition~\ref{prop:limit_Bn_scalar}, $\fsc_{B_n}$ converges to $\fsc$ as $n \to \infty$, hence the lower bound
\begin{equation}\label{eq:lower_bd_fr}
\liminf_{n \to \infty} f_{B_n}(\delta,\eta) \ge  \sup_{q\ge 0} \Big\{ \fsc(\delta, \eta q) - \frac{\eta q^2}{4}\Big\}.
\end{equation}

As for the upper, we use the same argument while fixing the value of the overlap $R_{1,0}(B_n)$ to a prescribed value $m$ along the interpolation. Define the set $O_n :=\{k/|B_n| : k \in \Z, |k|\le |B_n| \}$ and let $m \in O_n$. Consider the constrained free energy 
 \[\varphi(s;m) = \frac{1}{|B_n|} \E \log\Big\{2^{-|B_n|}\sum_{\theta \in \{\pm 1\}^{B_n}}  e^{\tilde{H}_s(\theta)} \bfone_{R_{1,0}(B_n)=m}\Big\},\]
 where
 \begin{align*}
\tilde{H}_{s}(\theta) &= H^{\delta}_{B_n}(\theta) + \sum_{u,v\in B_n} \Big(\sqrt{\frac{s\eta}{|B_n|}} Z_{uv}\theta_{u}\theta_{v} + \frac{s\eta}{|B_n|}\theta_{u}\theta_{v} \theta_{0u}\theta_{0v} - \frac{s\eta}{2|B_n|} \Big)\\
&~~~+ \sum_{u\in B_n} \Big( \sqrt{(1-s)\eta q} \, z_{u}\theta_{u} + (1-s)\eta m \theta_{u} \theta_{0u} - \frac{(1-s)\eta q}{2} \Big).
\end{align*}
(Note that this Hamiltonian no longer corresponds to an inference problem as before.)
The derivative reads
\[\varphi'(s) = -\frac{\eta}{4} \E \big\langle(R_{1,2}(B_n) - q)^2 \big\rangle_s + \frac{\eta}{2} \E \big\langle(R_{1,0}(B_n) - m)^2 \big\rangle_s - \frac{\eta m^2}{2}+ \frac{\eta q^2}{4}.\]
The middle term vanishes due the constraint on $R_{1,0}(B_n)$ and we obtain the upper bound $\varphi'(s)\le - \frac{\eta m^2}{2}+ \frac{\eta q^2}{4}$. Therefore
\begin{equation}\label{eq:interpolation_upper_bound}
\varphi(1;m) \le \inf_{q \ge 0} \Big\{\varphi(0;m) - \frac{\eta m^2}{2}+ \frac{\eta q^2}{4}\Big\}.
\end{equation}
On the one hand we have $\varphi(0;m) \le \widetilde{\fsc_{B_n}}(\delta,\eta m,\eta q)$, where
\begin{equation}\label{eq:fsc_tilde}
\widetilde{\fsc_{B_n}}(\delta,\eta m,\eta q) := \frac{1}{|B_n|} \E \log\Big\{2^{-|B_n|}\sum_{\theta \in \{\pm 1\}^{B_n}}  e^{H^{\delta}_{B_n}(\theta)+ \underset{u\in B_n}{\sum} (\sqrt{\eta q} z_{u}\theta_{u} + \eta m \theta_{u} \theta_{0u} - \frac{\eta q}{2}) }\Big\}.
\end{equation}
 We can follow the exact same argument leading to Proposition~\ref{prop:limit_Bn_scalar} to prove that $\widetilde{\fsc_{B_n}}(\delta,\eta m,\eta q)$ converges to a limit $\fsc(\delta, \eta m,\eta q)$ as $n \to \infty$; the only difference being $m$ and $q$ are not necessarily equal, but this does not alter the argument in any significant way. Moreover, the convergence is uniform on any compact set $K \subset [0,1] \times \R_+ \times \R_+$.  

On the other hand, let $Z_n(m) = 2^{-|B_n|}\sum_{\theta \in \{\pm 1\}^{B_n}}  e^{H_{B_n}^{\delta}(\theta)+H_{B_n}^{\eta}(\theta)} \bfone_{R_{1,0}(B_n)=m}$. We have
\begin{align}\label{eq:upp_bound_fr}
f_{B_n} &=\frac{1}{|B_n|} \E \log \sum_{m \in O_n} Z_n(m) \nonumber\\
&\le \frac{1}{|B_n|} \E \log \max_{m \in O_n} Z_n(m) + \frac{\log(2|B_n|+1)}{|B_n|} \nonumber\\
&= \frac{1}{|B_n|} \E \max_{m \in O_n}\log  Z_n(m) + \frac{\log(2|B_n|+1)}{|B_n|}.
\end{align}
At this point we want to show concentration of the random variables $F_n(m) := \log Z_n(m)$ so that we can compare $\E \max_{m} F_n(m)$ with $\max_m \E F_n(m)$. First, the random variables $\theta_{0u}$ are irrelevant as they can be absorbed in $\theta_u$ through the change of variables $\theta_u\theta_{0u} \mapsto \theta_u$ without changing the distribution of $\{F_n(m): m \in O_n\}$. So without loss of generality we set $\theta_{0u}=1$ for all $u \in B_n$. 
Now we deal with the randomness of $Z_{uv}$ and $Y^{\delta}_{uv}$. We use the so-called entropy method. We see $F_{n}(m)$ as a measurable Lipschitz  function of the random variables $(Z_{uv})_{u,v\in B_n}$ and $(Y^{\delta}_{uv})_{u,v\in B_n, |u-v|=1}$. Since we are only interested in an upper tail bound, it is enough to prove a logarithmic Sobolev inequality of the form 
\begin{equation}\label{eq:log_sobolev}
\Ent\big(e^{\gamma F_n}\big) \le \frac{\gamma^2\sigma_n^2}{2}\E\big[e^{\gamma F_n}\big] ~~~\mbox{for all}~~\gamma\ge 0,
\end{equation}
for some $\sigma_n>0$, where we define the `entropy' functional of a positive random variable $Y$  by
\[\Ent(Y) := \E Y\log Y - \E Y \log \E Y.\]
In our particular case, we will show that~\eqref{eq:log_sobolev} holds with $\sigma_n^2 = (\frac{\eta}{2}+2d c(\beta))|B_n|$ for some finite constant $c(\beta)>0$. We momentarily assume this to be true in order to finish the general argument, and then return to its proof later on. Herbst's argument; see~\cite{boucheron2013concentration}, then implies a sub-Gaussian bound on the moment generating function: 
\[\E e^{\gamma (F_n - \E F_n)} \le e^{\gamma^2\sigma_n^2/2}~~~\mbox{for all}~~\gamma\ge 0.\]
This in turn implies a bound on the expected maximum deviation $\E \max (F_n - \E F_n)$:
\begin{align*}
\E \Big[\max_{m \in O_n} \{F_n(m) - \E F_n(m)\}\Big] &\le \frac{1}{\gamma} \log\Big\{\sum_{m\in O_n} \E e^{\gamma (F_n(m) - \E F_n(m))}\Big\}\\
&\le \frac{\log (2|B_n|+1)}{\gamma} +\frac{\gamma \sigma_n^2}{2}.
\end{align*}
We let $\gamma = 1/\sqrt{|B_n|}$ and obtain that the above is bounded by $\bigo\big(\sqrt{|B_n|}\log |B_n|\big)$. 
Now, coming back to~\eqref{eq:upp_bound_fr}, we obtain
\[f_{B_n} \le \frac{1}{|B_n|} \max_{m \in O_n} \E \log Z_n(m) + \bigo\Big(\frac{\log |B_n|}{\sqrt{|B_n|}}\Big).\]
We notice that $\E \log Z_{n}(m) =\varphi(1;m)$, thus the interpolation upper bound~\eqref{eq:interpolation_upper_bound} and~\eqref{eq:fsc_tilde} imply
\begin{equation}
f_{B_n} \le \sup_{m \in [-1,1]} \inf_{q \ge 0} \Big\{\widetilde{\fsc_{B_n}}(\delta,\eta m, \eta q) - \frac{\eta m^2}{2}+ \frac{\eta q^2}{4}\Big\} + \bigo\Big(\frac{\log |B_n|}{\sqrt{|B_n|}}\Big).
\end{equation}
Since $\widetilde{\fsc_{B_n}}$ converges uniformly to $\widetilde{\fsc}$ on any compact set, this implies the upper bound
\begin{equation}\label{eq:upper_bd_fr}
\limsup_{n \to \infty} f_{B_n} \le \sup_{m \in [-1,1]} \inf_{q \ge 0} \Big\{\widetilde{\fsc}(\delta,\eta m, \eta q) - \frac{\eta m^2}{2}+ \frac{\eta q^2}{4}\Big\}.
\end{equation}

Next, we show that the upper and lower bounds~\eqref{eq:upper_bd_fr} and~\eqref{eq:lower_bd_fr} match. For any $m \in [-1,1]$ we obtain an upper bound on~\eqref{eq:upper_bd_fr} by letting $q= |m|$. Now observe that the map $r \mapsto \widetilde{\fsc}(\delta,r, s)$ is even since $r \mapsto \widetilde{\fsc_{B_n}}(\delta, r, s)$ is even. This implies the bound
 \[\limsup_{n \to \infty} f_{B_n} \le \sup_{m \in [-1,1]}\Big\{\widetilde{\fsc}(\delta,\eta |m|, \eta |m|) - \frac{\eta |m|^2}{4}\Big\},\]
 which is identical to the lower bound since $\widetilde{\fsc}(\delta, r, r) = \fsc(\delta,r)$ for $r \ge 0$, and thus we conclude that $f_{B_n}$ has a limit given by the above variational formula.
 
Now it remains to prove the log-Sobolev inequality~\eqref{eq:log_sobolev}. 

\begin{proof}[Proof of the bound~\eqref{eq:log_sobolev}]
We lighten the notation and denote by $X = (X_1,\cdots,X_N)$ the random variables $\{Z_{uv}:u,v \in B_n\} \cup \{Y^{\delta}_{uv}: u,v \in B_n, |u-v|=1\}$: we have $X_i = Z_{uv}$ for some one-to-one mapping $(u,v)\mapsto i$ for $1 \le i \le N_1$, and $X_i = Y^{\delta}_{uv}$ for $N_1+1 \le i\le N$. Denote $F = F(X_1,\cdots,X_N)$ the random variable $F_{n}(m) = \log Z_n(m)$. Denote $X^{\setminus k} = (X_1,\cdots,X_{k-1},X_{k+1},\cdots,X_N)$. The following tensorization formula for the entropy $\Ent$ is well known; see e.g.~\cite{boucheron2013concentration}: for all $\gamma \ge 0$,
\begin{equation}\label{eq:tensorization}
\Ent\big(e^{\gamma F}\big) \le \E\Big[\sum_{k=1}^N \Ent\big(e^{\gamma F} \big| X^{\setminus k}\big)\Big].
\end{equation}
Therefore is suffices to bound the entropies $\Ent\big(e^{\gamma F} \big| X^{\setminus k}\big)$ in which $F$ is seen as a univariate function of $X_{k}$. 
For $1\le k \le N_1$ the variable $X_k$ is Gaussian $N(0,1)$. We use the Gaussian logarithmic Sobolev inequality (Theorem 5.4 in~\cite{boucheron2013concentration}) 
\[\Ent\big(e^{\gamma F} \big| X^{\setminus k}\big) \le \frac{\gamma^2}{2} \E\big[(\partial_k F)^2e^{\gamma F} \big| X^{\setminus k}\big],\]   
valid for all $\gamma \ge 0$. Since $\partial_{k} F = \sqrt{\frac{\eta}{|B_n|}}\langle \theta_u\theta_v\rangle$, we have $|\partial_{k} F| \le \sqrt{\frac{\eta}{|B_n|}}$, and therefore
\begin{equation}\label{eq:log_sob_gaussian}
\Ent\big(e^{\gamma F} \big| X^{\setminus k}\big) \le \frac{\gamma^2\eta}{2|B_n|}\E\big[e^{\gamma F} \big| X^{\setminus k}\big].
\end{equation}
For $N_1+1\le k\le N$  the variable $X_k$ is Rademacher with probability $1-p$ for taking the value $1$ and probability $p$ for $-1$. We use the following logarithmic Sobolev inequality for Rademacher random variables (Theorem 5.2 in~\cite{boucheron2013concentration}) :
\[\Ent\big(e^{\gamma F} \big| X^{\setminus k}\big) \le \frac{c(p)}{2} \E\Big[\big(e^{\gamma F/2} - e^{\gamma \tilde{F}^{(k)}/2}\big)^2\big| X^{\setminus k}\Big],\]
where $c(p) = \frac{1}{1-2p}\log(\frac{1-p}{p}) = 2\beta/\delta$ and $\tilde{F}^{(k)} = F(X_1,\cdots,X_k',\cdots,X_N)$ where $X_k'$ is an independent copy of $X_k$.
By convexity of the exponential, $e^{t} \le e^{s} + (t-s)e^t$ for all $t,s \in \R$, so
\begin{align*}
\E\Big[\big(e^{\gamma F/2} - e^{\gamma \tilde{F}^{(k)}/2}\big)^2\big| X^{\setminus k}\Big] &\le 
\frac{\gamma^2}{4} \E\Big[(F -\tilde{F}^{(k)})^2 e^{\gamma F} \bfone_{F \ge \tilde{F}^{(k)}} \big| X^{\setminus k}\Big] \\
&~~+  \frac{\gamma^2}{4}\E\Big[(\tilde{F}^{(k)}-F)^2 e^{\gamma  \tilde{F}^{(k)}} \bfone_{F \le \tilde{F}^{(k)}} \big| X^{\setminus k}\Big]\\
&=\frac{\gamma^2}{2} \E\Big[(F -\tilde{F}^{(k)})^2 e^{\gamma F} \bfone_{F \ge \tilde{F}^{(k)}} \big| X^{\setminus k}\Big].
\end{align*}
The last line follows since the two terms in the first inequality are identical. Now since $\partial_k F = \beta \langle \theta_u\theta_v\rangle$ which is $\le \beta$ in absolute value, we have $|F - \tilde{F}^{(k)}| \le \beta |X_k - X_k'| \le 2\beta$. Plugging this estimate into the above bounds we obtain
\begin{equation}\label{eq:log_sob_rademacher}
\Ent\big(e^{\gamma F} \big| X^{\setminus k}\big) \le c(p)\beta^2\gamma^2 \E\big[e^{\gamma F} \big| X^{\setminus k}\big].
\end{equation}
Now we combine the two bounds~\eqref{eq:log_sob_gaussian} and~\eqref{eq:log_sob_rademacher} into~\eqref{eq:tensorization} to obtain
\begin{align*}
\Ent\big(e^{\gamma F}\big) &\le \Big(\frac{\gamma^2\eta}{2|B_n|} {|B_n|\choose2} + c(p)\beta^2\gamma^2 d |B_n|\Big) \E\big[e^{\gamma F} \big]\\
&\le \frac{\gamma^2\sigma_n^2}{2}\E\big[e^{\gamma F} \big],
\end{align*}
with $\sigma_n^2 = (\frac{\eta}{2} +2c(p)\beta^2 d)|B_n|$.
\end{proof}

\subsection{Proof of Propositions~\ref{prop:limits_phi1_phi2} and~\ref{prop:additional_scalar}}
\label{sec:proofs_limits}
In this subsection we prove convergence of the free energies of the one-block and two-block posteriors respectively. The difference with the setting of Proposition~\ref{prop:limit_Bn_GOE} is the inhomogeneity of the SNR in the GOE side information, e.g., pairs of vertices in $B \setminus B'$ have a different SNR than the ones in $B \cap B'$. Although we apply the same approach, the inhomogeneity will lead to technical complications which prevent us from proving equality of the upper and lower bounds achieved via the interpolation method. Nevertheless, Assumption~\ref{assump:lipschitz} allows us to salvage this situation for small $\eta$.    
 
We only write a detailed argument for Proposition~\ref{prop:limits_phi1_phi2}; the proof extends verbatim to Proposition~\ref{prop:additional_scalar}. 
    
Recall that for a given block $B$, the one-block posterior is given by
\begin{equation*}\label{eq:gibbs_one_block2}
\P\Big(\big(\theta_x\big)_{x\in B} \big| Y^{\delta}_B , Y^{\eta}_B\Big) = \frac{1}{Z_B}  \omega^{\sSK}_B(\theta) \cdot e^{H_{B}^{\delta}(\theta)},
\end{equation*}
and for two adjacent blocks $B$ and $B'$, the two-block posterior is
\begin{equation*}\label{eq:gibbs_two_block2}
\P\Big(\big(\theta_x\big)_{x\in B \cup B'} \big| \bigcup_{A \in \{B,B'\}} \{Y^{\delta}_A , Y^{\eta}_A \}\Big) = \frac{1}{Z_{B\cup B'}} \omega_{B \cup B'}^{\sSK}(\theta) \cdot e^{H_{B\cup B'}^{\delta}(\theta)},
\end{equation*} 
where 
\[H_{B \cup B'}^{\delta}(\theta) := \beta\sum_{\underset{|u-v|=1}{u,v\in B \cup B'}} Y^{\delta}_{uv}\theta_u\theta_v,\] 
and for $\theta = (\theta_B,\theta_{B'}) \in \{\pm 1\}^{B \cup B'}$,
\begin{align*}
\omega_{B\cup B'}^{\sSK}(\theta) &:= \prod_{B'' \sim B} e^{H^{\eta}_{B''}(\theta_{B})} 
\cdot \prod_{B'' \sim B'} e^{H^{\eta}_{B''}(\theta_{B'})}.
\end{align*}
 The free energies of the above posteriors are 
\begin{align*} 
f_{B} &= \frac{1}{|B|} \E \log\Big\{2^{-|B|} \sum_{\theta \in \{\pm 1\}^{B}} \omega^{\sSK}_B(\theta) \cdot e^{H_{B}^{\delta}(\theta)} \Big\} ~~~\mbox{and}\\  
f_{B\cup B'} &= \frac{1}{|B|}\E \log\Big\{2^{-|B \cup B'|} \sum_{\theta \in \{\pm 1\}^{B\cup B'}} \omega^{\sSK}_{B \cup B'}(\theta) \cdot e^{H_{B \cup B'}^{\delta}(\theta)} \Big\}.
\end{align*}

\subsection{Limit of $f_{B}$}
The same arguments already used can be applied: rather than having one parameter in the interpolation argument, we will have several, each one assigned to a specific spatial region. As before, we prove the lower and upper bounds separately.    
The overlap replacement in the interpolation proceeds according to the following table with $2d+1$ free parameters: 
\begin{align*}  
R_{1,2}\big(B^{\bullet}\big) ~&\longleftrightarrow~ q^{\bullet}\\
R_{1,2}(B \cap B') ~&\longleftrightarrow~ q_{B\cap B'}, ~~~ \forall ~B' \sim B,
\end{align*} 
In the above, we denote $B^{\bullet} = B \setminus \underset{B' \sim B}{\cup} B'$.
Overlaps in other regions are then obtained as convex combinations of $\{q^{\bullet},q_{B \cap B'}: B' \sim B\}$. For instance, with $\alpha' = \frac{|B \cap B'|}{|B|}$, we have
\begin{align*}  
R_{1,2}(B ) ~&\longleftrightarrow~ q_{B} := (1-2d\alpha') q^{\bullet} + \sum_{B'\sim B} \alpha' q_{B \cap B'},\\
R_{1,2}(B \setminus B') ~&\longleftrightarrow~ q_{B \setminus B'} := \frac{1-2d\alpha'}{1-\alpha'} q^{\bullet} + \sum_{\underset{B'' \neq B'}{B''\sim B}} \frac{\alpha'}{1-\alpha'} q_{B \cap B''}.
\end{align*} 
For all the arguments to come, we will be neglecting a $\smallo_{\scL}(1)$ error stemming from replacing $\alpha'$ which a priori varies with $\scL$ with its limit $\alpha$. This can be safely done because all involved quantities are Lipschitz in $\alpha'$. So tacitly assume $\alpha'=\alpha$ with no further comment. 

We let $\hsc(\theta_{B}; \lambda) :=\sum_{x \in B} (\sqrt{\lambda} y_x \theta_x - \lambda/2)$, where $y_x = \sqrt{\lambda}\theta_x + z_x$.
With this notation, we interpolate between the two Hamiltonians
\begin{align*}
 H^{\eta}_{B'}(\theta_B) ~&\longleftrightarrow~ \hsc(\theta_B; t \eta q_{B}) + \hsc(\theta_{B\cap B'}; (1-t) \eta q_{B \cap B'})  + \hsc(\theta_{B\setminus B'}; (1-t) \eta q_{B \setminus B'}).
\end{align*}
Interpolating \`a la Guerra, the derivative along the interpolation path reads
\begin{align*} 
\varphi' &= \sum_{B' \sim B} \Big\{ \frac{t\eta}{4} \Big(\E \big\langle \big(R_{1,2}(B) - q_B\big)^2\big\rangle - q_B^2\Big)\\
&~~~~~~~~~~~+  \alpha \frac{(1-t)\eta}{4} \Big(\E \big\langle \big(R_{1,2}(B\cap B') - q_{B\cap B'}\big)^2\big\rangle - q_{B\cap B'}^2\Big)\\
&~~~~~~~~~~~+ (1-\alpha) \frac{(1-t)\eta}{4} \Big( \E \big\langle \big(R_{1,2}(B\setminus B') - q_{B\setminus B'}\big)^2\big\rangle - q_{B\setminus B'}^2\Big) \Big\}.
\end{align*}
Dropping the squared differences which are nonnegative we obtain a lower bound
 \begin{align*} 
\varphi' &\ge 
-\frac{\eta}{4} \sum_{B' \sim B}\Big( t q_B^2
+  \alpha (1-t) q_{B\cap B'}^2
+ (1-\alpha) (1-t) q_{B\setminus B'}^2 \Big).
\end{align*}
At the decoupled end of the interpolation, the free energy reads
 \begin{align*}
 \varphi(0) = \frac{1}{|B|}\E \log \Big\{ 2^{-|B|}\sum_{\theta \in \{\pm 1\}^B} e^{H_{B}^{\delta}(\theta)} \omega^{\sc}(\theta)\Big\},
\end{align*}
where
 \begin{align*}
  \omega^{\sc}(\theta) &=
  \prod_{B' \sim B} \exp\Big\{\hsc(\theta_B; t \eta q_{B}) + \hsc(\theta_{B\cap B'}; (1-t) \eta q_{B \cap B'})  + \hsc(\theta_{B\setminus B'}; (1-t) \eta q_{B \setminus B'})\Big\} \\
 &\stackrel{d}{=}  \exp\big\{\hsc(\theta_{B^{\bullet}}; r^{\bullet})\} 
 \cdot \prod_{B'\sim B}\exp\big\{ \hsc(\theta_{B\cap B'};r_{B \cap B'})\big\},
\end{align*}
where 
\begin{align}
r^{\bullet} &:=  \sum_{B'\sim B} (t \eta q_{B} + (1-t) \eta q_{B \setminus B'}),\label{eq:r_bullet}\\
r_{B \cap B'} &:=   2dt \eta q_{B}+(1-t) \eta q_{B \cap B'} + \sum_{\underset{B'' \neq B'}{B''\sim B}} (1-t) \eta q_{B \setminus B''}.\label{eq:r_cap}
\end{align}
From this expression we can obtain an asymptotic formula for $\varphi(0)$ as $\scL \to \infty$. We partition $B$ into $B^{\bullet} \cup_{B' \sim B} (B \cap B')$, and observe that omitting the interactions corresponding to the edges of $\Z^d$ crossing the boundaries of this partition in the Hamiltonian $H^{\delta}_B(\theta)$ contributes an error term $o_{\scL}(1)$:  
\[\lim_{\scL \to \infty} \varphi(0) = (1-2d\alpha)\fsc(\delta;r^{\bullet}) +  \alpha \sum_{B'\sim B} \fsc(\delta;r_{B'}).\] 
Thus we obtain a lower bound on the free energy $f_B$: 
\begin{equation}\label{eq:lower_bd_one_block}
\liminf_{\scL \to \infty} f_B \ge \phi_1,
\end{equation} 
where, letting $\underline{q} = \{q^{\bullet}, q_{B\cap B'}: B'\sim B\}$,
\begin{align}\label{eq:free_energy_lower_bd_one_block}
\phi_1 :=&  
\sup_{\underline{q} \in [0,1]^{2d+1}} \Big\{ 
(1-2d\alpha)\fsc(\delta;r^{\bullet}) +  \alpha \sum_{B'\sim B} \fsc(\delta;r_{B\cap B'}) \nonumber\\
&\hspace{2cm}- \frac{1}{4}\sum_{B' \sim B}\Big( t\eta q_B^2
+  \alpha (1-t)\eta q_{B\cap B'}^2
+ (1-\alpha) (1-t)\eta q_{B\setminus B'}^2 \Big)\Big\} .
\end{align}
Now we prove an upper bound by restricting to configurations of fixed overlaps $\underline{m} = \{m^{\bullet}, m_{B\cap B'}: B'\sim B\}$, exactly in the same way as done for $f_{B_n}(\delta,\eta)$. 
We consider the restricted free energy
  \[\varphi(s;\underline{m}) = \frac{1}{|B|} \E \log\Big\{2^{-|B|}\sum_{\theta \in \{\pm 1\}^{B}}  \omega_{B}^{\sSK}(\theta;s) e^{H^{\delta}_{B}(\theta)} 
  \bfone_{
  \underset{R_{1,0}(B^{\bullet}) = m^{\bullet}}{
  R_{1,0}(B\cap B') = m_{B \cap B'} \forall B' \sim B,}
 } \Big\} \]
 where
\[ \omega_{B}^{\sSK}(\theta;s) = \prod_{B' \sim B} e^{H^{\eta}_{B'}(\theta;s)},\]
and for each $B' \sim B$,
\[H^{\eta}_{B'}(\theta;s) = I + II +III,\]
\begin{align*}
I &= \sum_{u,v\in B} \Big(\sqrt{\frac{ts\eta}{|B|}} Z_{uv}^{(a,\bullet)}\theta_{u}\theta_{v} + \frac{ts\eta}{|B|}\theta_{u}\theta_{v} \theta_{0u}\theta_{0v} - \frac{ts\eta}{2|B|} \Big)\\
&~~~+ \sum_{u\in B} \Big( \sqrt{(1-s)t\eta q_B} \, z^{(a,\bullet)}_{u}\theta_{u} + (1-s)t\eta m_B \theta_{u} \theta_{0u} - \frac{(1-s)t\eta q_B}{2} \Big),\\
II &=  \sum_{u,v\in B \cap B'} \Big(\sqrt{\frac{(1-t)s\eta}{|B\cap B'|}} Z_{uv}^{(a,\cap)}\theta_{u}\theta_{v} + \frac{(1-t)s\eta}{|B \cap B'|}\theta_{u}\theta_{v} \theta_{0u}\theta_{0v}- \frac{(1-t)s\eta}{2|B\cap B'|}\Big)\\
&~~~+\sum_{u\in B \cap B'} \Big( \sqrt{(1-t)(1-s)\eta q_{B\cap B'}} \, z_{u}^{(a,\cap)}\theta_{u} + (1-s)(1-t)\eta m_{B\cap B'} \theta_{u} \theta_{0u} - \frac{(1-t)(1-s)\eta q_{B\cap B'}}{2} \Big)\\
III &= \sum_{u,v\in B \setminus B'} \Big(\sqrt{\frac{(1-t)s\eta}{|B\setminus B'|}} Z_{uv}^{(a,\setminus)}\theta_{u}\theta_{v} + \frac{(1-t)s\eta}{|B \setminus B'|}\theta_{u}\theta_{v} \theta_{0u}\theta_{0v} - \frac{(1-t)s\eta}{2|B\setminus B'|}\Big)\\
&~~~+\sum_{u\in B \setminus B'} \Big( \sqrt{(1-t)(1-s)\eta q_{B\setminus B'}} \, z_{u}^{(a,\setminus)}\theta_{u} + (1-s)(1-t)\eta m_{B\setminus B'} \theta_{u} \theta_{0u} - \frac{(1-t)(1-s)\eta q_{B\setminus B'}}{2} \Big),
\end{align*}
where
\begin{align*}
m_{B} &= (1-2d\alpha)m^{\bullet} + \sum_{B' \sim B} \alpha m_{B \cap B'} ~~~\mbox{and}\\
m_{B \setminus B'} &=  \frac{1-2d\alpha}{1-\alpha} m^{\bullet} + \sum_{\underset{B'' \neq B'}{B''\sim B}} \frac{\alpha}{1-\alpha} m_{B \cap B''}.
\end{align*}
Observe that at $s=1$ the Hamiltonian $H^{\eta}_{B'}(\theta;s)$ is equal to the $H^{\eta}_{B'}(\theta)$ given in Eq.~\eqref{eq:hamiltonian_goe_inhomogeneous}. Conducting exactly the same argument as in subsection~\ref{sec:proof_limit_Bn_GOE} we obtain an upper bound
\begin{equation}\label{eq:upper_bd_one_block}
\limsup_{\scL \to \infty} f_{B}  \le \tilde{\phi}_1,
\end{equation}
where
\begin{align}\label{eq:free_energy_upper_bd_one_block}
\tilde{\phi}_1 &= \sup_{\underline{m} \in [-1,1]^{2d+1}} \inf_{\underline{q} \in [0,1]^{2d+1}} 
\Big\{
(1-2d\alpha) \widetilde{\fsc}(\delta,s^{\bullet}, r^{\bullet}) + \alpha \sum_{B'\sim B} \widetilde{\fsc}(\delta;s_{B\cap B'},r_{B\cap B'}) \nonumber\\
&\hspace{2cm}- \frac{1}{2}\sum_{B' \sim B}\Big( t\eta m_B^2
+  \alpha (1-t)\eta m_{B\cap B'}^2
+ (1-\alpha) (1-t)\eta m_{B\setminus B'}^2 \Big)\nonumber\\
&\hspace{2cm}+ \frac{1}{4}\sum_{B' \sim B}\Big( t\eta q_B^2
+  \alpha (1-t)\eta q_{B\cap B'}^2
+ (1-\alpha) (1-t)\eta q_{B\setminus B'}^2 \Big)\Big\},
\end{align} 
where $r^{\bullet}$ and $r_{B \cap B'}$ are defined in~\eqref{eq:r_bullet} and~\eqref{eq:r_cap} respectively, and $s^{\bullet}$ and $s_{B \cap B'}$ are defined similarly, with $m_{B}$, $m_{B \cap B'}$ and $m_{B \setminus B'}$ replacing $q_{B}$, $q_{B \cap B'}$ and $q_{B \setminus B'}$ respectively in those definitions.

Next, if we could argue that the supremum in $\underline{m}$ in formula~\eqref{eq:free_energy_upper_bd_one_block} is achieved when all coordinates of $\underline{m}$ are nonnegative, then we can see that this formula matches the lower bound~\eqref{eq:free_energy_lower_bd_one_block} simply by setting $q^{\bullet} = m^{\bullet}$ and $q_{B \cap B'}= m_{B \cap B'}$ for all $B'\sim B$, and we would be able prove that  $\underset{\scL \to \infty}{\lim} f_B = \phi_1$. Unfortunately we are unable to follow this line of reasoning: The fact that the map $s \mapsto  \widetilde{\fsc}(\delta;s,r)$ is even provides little leverage to argue about every single coordinate of $\underline{m}$. 
Nevertheless we will see that under Assumption~\ref{assump:lipschitz} both formulas can be simplified: the optimal values of the parameters must all be equal, and this implies equality of $\phi_1$ and $\tilde{\phi}_1$.

\subsection{Reducing the number of parameters}
We show that under Assumption~\ref{assump:lipschitz}, the variational formulas defining $\phi_1$ Eq.~\eqref{eq:free_energy_lower_bd_one_block} and $\tilde{\phi}_1$ Eq.~\eqref{eq:free_energy_upper_bd_one_block} are achieved when all the overlap parameters become equal when $\eta$ is small. This allows to argue that $\phi_1 = \tilde{\phi}_1$ in this regime, and thus leads to the simplified formula $\phi_1 = \phi(\delta,2d \eta)$, Eq.\eqref{eq:var_1}. 

We start by writing the first-order stationarity conditions for the optimality of $\underline{q}$. Let $F = F(\underline{q})$ be the potential being maximized in~\eqref{eq:free_energy_lower_bd_one_block}. Via straightforward computations we can write the derivatives of the potential $F$ w.r.t.\  $\underline{q}$ in a condensed form. 
We fix an ordering of the variables $\{q^{\bullet}, q_{B \cap B'}: B'\sim B\}$ and consider $\underline{q}$ as a vector of size $2d+1$.   
For all $i\in [2d+1]$, 
\begin{align*}
\frac{\rmd F}{\rmd \underline{q}_i} &= (1-2\alpha d) \frac{\rmd r^{\bullet}}{\rmd \underline{q}_i} \cdot \big({\fsc}'(\delta; r^{\bullet}) - \frac{q^{\bullet}}{2}\big) 
+  \alpha \sum_{B'\sim B}\frac{\rmd r_{B \cap B'}}{\rmd \underline{q}_i}  \cdot\big({\fsc}'(\delta;r_{B \cap B'}) -  \frac{q_{B \cap B'}}{2}\big).
\end{align*}
 Observe from the above formula that optimizers $\underline{q}$ of~\eqref{eq:free_energy_lower_bd_one_block} are in the interior of $\R_+^{2d+1}$ if $\delta>\delta_c$. Indeed, since $\frac{\rmd}{\rmd \lambda} \fsc(\delta;\lambda)\ge \frac{\rmd}{\rmd \lambda} \fsc(\delta;0^+)>0$ when $\delta>\delta_c$, so the value $F( \underline{q})$ for a point $\underline{q}$ on the boundary can always be improved. 

Define the $2d+1 \times 2d +1$ Jacobian matrix $J$ whose columns are the vectors $(1-2\alpha d)(\frac{\rmd r^{\bullet}}{\rmd \underline{q}_i})_{i\in [2d+1]}$,  $\alpha(\frac{\rmd r_{B'}}{\rmd \underline{q}_i})_{i\in [2d+1]}$ for $B' \sim B$.  
With this notation the gradient of the potential $F$ is given by
\[\Big(\frac{\rmd F}{\rmd \underline{q}_i}\Big)_{i \in [2d+1]} = J\underline{x},\] 
where $\underline{x} \in \R^{2d+1}$ has coordinates ${\fsc}'(\delta; r^{\bullet}) - \frac{q^{\bullet}}{2}$ and ${\fsc}'(\delta;r_{B \cap B'}) -  \frac{q_{B \cap B'}}{2}$, $B' \sim B$.
This matrix only depends on the parameters $t$ and $\eta$. Moreover, 
\begin{lemma}\label{lem:invertible}
If $t <1$ and $2\alpha d < 1$ then the matrix $J$ is invertible.
\end{lemma}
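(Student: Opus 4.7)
The plan is to expose the block structure of $J$ via a well-chosen change of variables. Since the hypothesis gives $\alpha > 0$ and $1-2d\alpha \neq 0$, invertibility of $J$ is equivalent to invertibility of the linear map $A: (q^\bullet, (q_{B \cap B'})_{B'\sim B}) \mapsto (r^\bullet, (r_{B \cap B'})_{B'\sim B})$, so I would focus on showing that $A$ is an isomorphism.

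Next, I would reparametrize the domain by $(q_B, (q_{B \setminus B'})_{B'\sim B})$. Using the defining relations $q_B = (1-2d\alpha) q^\bullet + \alpha \sum_{B' \sim B} q_{B \cap B'}$ and $(1-\alpha) q_{B \setminus B'} = q_B - \alpha q_{B \cap B'}$, one can check that this is an invertible linear change of variables under the standing hypotheses (the inverse being explicit), so it suffices to prove that the induced map $A'$ in the new coordinates is invertible.

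The key observation, obtained directly from \eqref{eq:r_bullet}, \eqref{eq:r_cap}, and the identity $q_{B \cap B'} - q_{B \setminus B'} = (q_B - q_{B \setminus B'})/\alpha$, is the separation
\[ r_{B \cap B'} - r^\bullet \;=\; \frac{(1-t)\eta}{\alpha}\,(q_B - q_{B \setminus B'}). \]
When $t < 1$, this recovers each difference $q_B - q_{B \setminus B'}$ from the output data $(r^\bullet,(r_{B \cap B'}))$. Summing over $B' \sim B$ and combining with $r^\bullet = 2d t \eta\, q_B + (1-t)\eta \sum_{B' \sim B} q_{B \setminus B'}$ eliminates the sum $\sum_{B'} q_{B \setminus B'}$ and yields the explicit formula $2d \eta\, q_B = (1-2d\alpha) r^\bullet + \alpha \sum_{B' \sim B} r_{B \cap B'}$, determining $q_B$; each $q_{B \setminus B'}$ then follows from $q_B$ and $r_{B \cap B'} - r^\bullet$.

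The whole argument is elementary linear algebra and I do not anticipate a serious obstacle. The only non-obvious step is the choice of coordinates $(q_B, q_{B \setminus B'})$, and it is natural once one observes that the ``time-$t$'' part of the interpolating Hamiltonian couples to the spins through $R_{1,2}(B)$ while the ``time-$(1-t)$'' part couples through $R_{1,2}(B \cap B')$ and $R_{1,2}(B \setminus B')$, so the two pieces decouple cleanly precisely in these variables.
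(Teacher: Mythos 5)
Your proof is correct, but it takes a genuinely different route from the paper's. The paper writes out $J$ entrywise as an arrowhead-plus-rank-one matrix with entries $a,b,\beta,\gamma$, and then checks invertibility via a Schur complement, verifying by direct computation that $b-\beta>0$ and $\beta-\gamma^2/a>0$ under the hypotheses. You instead factor the problem: since $J=M^{\top}D$ with $D=\mathrm{diag}(1-2\alpha d,\alpha,\dots,\alpha)$ invertible (using $2\alpha d<1$ and $\alpha=1/(3^d+d)>0$; note the latter comes from the definition of $\alpha$, not from the hypothesis), it suffices to invert the map $\underline{q}\mapsto\underline{r}$, which you do explicitly after passing to the coordinates $(q_B,(q_{B\setminus B'}))$. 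I checked the two key identities: $r_{B\cap B'}-r^{\bullet}=\frac{(1-t)\eta}{\alpha}(q_B-q_{B\setminus B'})$ follows from \eqref{eq:r_bullet}--\eqref{eq:r_cap} together with $q_B=\alpha q_{B\cap B'}+(1-\alpha)q_{B\setminus B'}$, and summing over $B'\sim B$ and eliminating $\sum_{B'}q_{B\setminus B'}$ via \eqref{eq:r_bullet} does give $2d\eta\,q_B=(1-2d\alpha)r^{\bullet}+\alpha\sum_{B'\sim B}r_{B\cap B'}$; the change of variables itself is invertible precisely because $\alpha>0$ and $1-2d\alpha\neq 0$. Your argument is cleaner and yields an explicit inverse, and it isolates where each hypothesis is used ($t<1$ only to recover the individual $q_{B\setminus B'}$ from the differences $r_{B\cap B'}-r^{\bullet}$); the paper's computation, by contrast, produces the explicit entries of $J$. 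One shared caveat: both arguments implicitly assume $\eta>0$ (the paper silently drops the overall factor of $\eta$ from $a,b,\beta,\gamma$, and your inversion formulas divide by $\eta$); for $\eta=0$ the matrix $J$ vanishes, so the lemma is to be read with $\eta>0$ as in its context of use.
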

\begin{proof}
After some manipulations, the matrix $J$ can be written as 
\begin{align*}
J = 
\begin{pmatrix}
a & \gamma & \cdots& \cdots &\gamma\\
\gamma & b & \beta & \cdots &\beta \\ 
\vdots &  \beta & \ddots & \ddots & \vdots\\
\vdots &  \vdots & \ddots & \ddots & \beta\\
\gamma & \beta & \cdots & \beta&   b 
\end{pmatrix} \, ,
\end{align*}
where 
\begin{align*}
a &= 2d (1-2\alpha d)^2 \frac{1-\alpha t}{1-\alpha}, ~~~ b = \alpha (1-(1-2\alpha d)t),\\
 \beta &= \frac{\alpha^2}{1-\alpha} (2d-1+ (1-2\alpha d)t), ~~~ \gamma = \frac{\alpha}{1-\alpha} (1-2\alpha d)(2d-1+ (1-2\alpha d)t).
\end{align*}
Let $A$ be the lower principal minor of size $2d \times 2d$ (i.e., $A$ is obtained by excluding the first row and first column of $J$). We can see that $A = (b-\beta) I + \beta \bfone \bfone^\top$.
Schur's complement formula implies that $J$ is invertible if and only if $a \neq 0$ and $A - \frac{\gamma^2}{a} \bfone \bfone^\top$ is invertible. We have
\[A - \frac{\gamma^2}{a} \bfone \bfone^\top = (b-\beta)I + (\beta - \frac{\gamma^2}{a}) \bfone \bfone^\top.\] 
Direct computations show that
\begin{align*}
b - \beta &= \frac{\alpha}{1-\alpha}(1-t)(1-2\alpha d),\\
\beta - \frac{\gamma^2}{a} &= \frac{\alpha^2 D}{(1-\alpha)(1-\alpha t)} \big((1-(1-\alpha)^2)(2d-1+(1-2\alpha d)t)+1-t\big) ,
\end{align*}
with $D = \frac{2d -1+ (1-2\alpha d)t}{2d}$. The above quantities are strictly positive when $t <1$ and $2\alpha d < 1$ so $J$ is invertible under these conditions.
\end{proof}
Therefore, the first-order stationarity conditions for $F$ imply $\underline{x}=0$, i.e., 
\begin{equation}\label{eq:first_order}
{\fsc}'(\delta; r^{\bullet}) = \frac{q^{\bullet}}{2}\quad \mbox{and} \quad
{\fsc}'(\delta;r_{B \cap B'}) =  \frac{q_{B \cap B'}}{2}~~~ \forall~ B' \sim B.
\end{equation}

 \begin{lemma}\label{lem:symmetry}
 Under Assumption~\ref{assump:lipschitz}, there exists $\eta_0>0$ depending on $d$, $\alpha$ and $L$ such that if $\eta< \eta_0$, then all solutions $\underline{q}$ to~\eqref{eq:first_order} must satisfy
 \[q^{\bullet} = q_{B \cap B'} ~~~\mbox{for all}~ B'\sim B.\]  
 \end{lemma}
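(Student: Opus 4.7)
The plan is to turn the two sets of first-order conditions~\eqref{eq:first_order} into a system of linear contractions on the differences $q_{B\cap B_i}-q_{B\cap B_j}$ and $q^{\bullet}-q_{B\cap B_i}$, using the Lipschitz bound on $\lambda\mapsto {\fsc}'(\delta,\lambda)$ from Assumption~\ref{assump:lipschitz}. I would carry this out in two stages: first establish that the $q_{B\cap B'}$ all coincide, then show that their common value equals $q^{\bullet}$. Throughout, $\alpha = 1/(3^d+d)$ satisfies $2d\alpha<1$ (since $2d<3^d+d$ for all $d\ge 1$), so the denominators appearing below are strictly positive.

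\emph{Step 1 (symmetry among joints).} Fix two neighbors $B_i,B_j\sim B$ and set $a_k := q_{B\cap B_k}$. Subtracting the definitions~\eqref{eq:r_cap} of $r_{B\cap B_i}$ and $r_{B\cap B_j}$, the $q_B$ terms cancel, and the sums $\sum_{B''\neq B_i,B_j}(1-t)\eta q_{B\setminus B''}$ cancel as well, leaving only
\[
r_{B\cap B_i}-r_{B\cap B_j} = (1-t)\eta\,(a_i-a_j) + (1-t)\eta\,(q_{B\setminus B_j}-q_{B\setminus B_i}).
\]
Substituting $q_{B\setminus B_k}=\tfrac{1-2d\alpha}{1-\alpha}q^{\bullet}+\tfrac{\alpha}{1-\alpha}\sum_{l\neq k}a_l$, the $q^{\bullet}$ contributions drop out and the expression collapses to $\tfrac{(1-t)\eta}{1-\alpha}(a_i-a_j)$. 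Combined with $a_i-a_j = 2({\fsc}'(\delta;r_{B\cap B_i})-{\fsc}'(\delta;r_{B\cap B_j}))$ from~\eqref{eq:first_order} and the Lipschitz bound with constant $L$, this gives
\[
|a_i-a_j|\le \frac{2L(1-t)\eta}{1-\alpha}|a_i-a_j|.
\]
Choosing any $\eta_0<(1-\alpha)/(2L)$ makes the coefficient strictly less than one uniformly in $t\in[0,1]$, forcing $a_i=a_j$ for every pair $(i,j)$.

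\emph{Step 2 (matching $q^{\bullet}$).} Write $a$ for the common value from Step 1. Then $q_{B\setminus B'}=\tfrac{1-2d\alpha}{1-\alpha}q^{\bullet}+\tfrac{(2d-1)\alpha}{1-\alpha}a$ is independent of $B'$, and a parallel simplification of $r^{\bullet}-r_{B\cap B_i}$ using~\eqref{eq:r_bullet}-\eqref{eq:r_cap} yields
\[
r^{\bullet}-r_{B\cap B_i} = (1-t)\eta\,\frac{1-2d\alpha}{1-\alpha}\,(q^{\bullet}-a).
\]
Another application of the Lipschitz bound gives $|q^{\bullet}-a|\le \tfrac{2L(1-t)(1-2d\alpha)\eta}{1-\alpha}|q^{\bullet}-a|$. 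Since $1-2d\alpha\in(0,1)$, the coefficient here is strictly smaller than in Step 1, so the same choice of $\eta_0$ works and forces $q^{\bullet}=a$.

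\emph{Main obstacle.} The algebra is essentially forced; the only conceptual point to verify is the applicability of the Lipschitz bound at the relevant arguments. Assumption~\ref{assump:lipschitz} guarantees the bound on $\R_+\setminus \mathcal{D}$ for a countable set $\mathcal{D}$, but by the convexity of $\fsc(\delta,\cdot)$ the right (or left) derivative is monotone and defined everywhere, and the finite-difference inequality $|{\fsc}'(\delta;r)-{\fsc}'(\delta;r')|\le L|r-r'|$ extends to all points where the derivatives exist. Since~\eqref{eq:first_order} is imposed at such points, the argument goes through without further subtlety, and the obstruction reduces to bookkeeping of the constants to ensure $\eta_0$ depends only on $d$, $\alpha$ and $L$.
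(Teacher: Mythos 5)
Your proof is correct and follows essentially the same route as the paper's: take pairwise differences of the stationarity equations, use the Lipschitz bound on ${\fsc}'$ together with the algebraic identities $r_{B\cap B'}-r_{B\cap B''}=\tfrac{(1-t)\eta}{1-\alpha}(q_{B\cap B'}-q_{B\cap B''})$ and $r^{\bullet}-r_{B\cap B'}=\tfrac{(1-t)\eta(1-2d\alpha)}{1-\alpha}(q^{\bullet}-q_{B\cap B'})$ to obtain contractions, and conclude for $\eta$ small. The only addition is your closing remark on extending the Lipschitz bound across the countable exceptional set, which is a reasonable clarification but not a departure from the paper's argument.
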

 \begin{proof}
 We compute pairwise differences of the equations in~\eqref{eq:first_order} and use the assumed Lipchitz property of $\frac{\rmd}{\rmd \lambda}{\fsc}$, as per item A1 of Assumption~\ref{assump:lipschitz}:
 for $B'$ and $B''$ two neighbors of $B$ we have
 \begin{align*}
 |q^{\bullet} - q_{B \cap B'}| &\le 2 L |r^{\bullet} - r_{B \cap B'}|,\\ 
 |q_{B \cap B'} - q_{B \cap B''}|  &\le 2L |r_{B \cap B'} - r_{B \cap B''}|.
 \end{align*}
 On the other hand we have
 \begin{align*}
 r^{\bullet} - r_{B \cap B'} &= (1-t)\eta (q_{B \setminus B'} - q_{B \cap B'})\\
 &= \frac{(1-t)\eta}{1-\alpha} (q_{B} - q_{B \cap B'}),\\
 r_{B \cap B'} - r_{B \cap B''} &= (1-t)\eta \big((q_{B \cap B'}-q_{B \cap B''}) - (q_{B \setminus B'} -q_{B \setminus B''})\big)\\
&~~~= \frac{(1-t)\eta}{1-\alpha} (q_{B \cap B'} - q_{B \cap B''}).
 \end{align*}
so
 \begin{align*}
 |q^{\bullet} - q_{B \cap B'}| &\le \frac{2L\eta}{1-\alpha}|q_{B} - q_{B \cap B'}|,\\
 |q_{B \cap B'} - q_{B \cap B''}|   &\le \frac{2L\eta}{1-\alpha}|q_{B\cap B'} - q_{B \cap B''}|.
 \end{align*}
If $\frac{2L\eta}{1-\alpha} <1$ then the second inequality implies $q_{B \cap B'} = q_{B \cap B''}$ and so $q_B= (1-2d\alpha)q^{\bullet}+2d\alpha q_{B \cap B'}$ for any $B'\sim B$. The first inequality then implies 
\[|q^{\bullet} - q_{B \cap B'}| \le \frac{2L\eta}{1-\alpha} (1-2d\alpha)|q^{\bullet} - q_{B \cap B'}|.\]
If $\frac{2L\eta}{1-\alpha} (1-2d\alpha) <1$ then $q^{\bullet} = q_{B\cap B'}$ for all $B'\sim B$. 
 \end{proof}
 
 Lemma~\ref{lem:symmetry} implies that the variational formula~\eqref{eq:free_energy_lower_bd_one_block} can be simplified when $\eta$ is small: 
 \begin{corollary}
Under Assumption~\ref{assump:lipschitz}, there exists $\eta_0>0$ such that for $\eta \in \R_+ \setminus \mathcal{D}$ and $\eta\le \eta_0$,
\[\phi_1 = \sup_{q \in [0,1]} \Big\{\fsc(\delta;2 d \eta q) - \frac{d}{2} \eta {q}^2\Big\} .\]
In particular, $\phi_1$ doesn't depend on $t$.
 \end{corollary}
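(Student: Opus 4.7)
The plan is to reduce the $(2d+1)$-dimensional variational problem defining $\phi_1$ in~\eqref{eq:free_energy_lower_bd_one_block} to a one-dimensional one by showing that every maximizer must be fully symmetric, and then substituting.

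First, since the potential $F$ being maximized is continuous on the compact cube $[0,1]^{2d+1}$, the supremum is attained at some $\underline{q}^\star$. I would invoke the observation recorded just after Lemma~\ref{lem:invertible}: when $\delta>\delta_c$ one has ${\fsc}'(\delta;0^+)>0$ (since $q_{\star\star}(\delta)>0$), so the gradient of $F$ at the origin is componentwise positive and no maximizer can have a zero coordinate; hence $\underline{q}^\star$ lies in the interior of $[0,1]^{2d+1}$.

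Next, at such an interior maximizer the stationarity condition $\nabla F(\underline{q}^\star)=0$ reads $J\,\underline{x}^\star=0$, where $\underline{x}^\star$ is the vector with entries ${\fsc}'(\delta;r^{\bullet})-q^{\bullet}/2$ and ${\fsc}'(\delta;r_{B\cap B'})-q_{B\cap B'}/2$. For $t<1$, Lemma~\ref{lem:invertible} says $J$ is invertible, so $\underline{x}^\star=0$, which is precisely the system~\eqref{eq:first_order}. Lemma~\ref{lem:symmetry} then supplies an $\eta_0>0$ such that, for all $\eta\le\eta_0$, every solution of~\eqref{eq:first_order} satisfies $q^{\bullet}=q_{B\cap B'}$ for every $B'\sim B$; call this common value $q$. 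By the definitions~\eqref{eq:r_bullet} and~\eqref{eq:r_cap} together with the formulas for $q_B$ and $q_{B\setminus B'}$, this forces $q_B=q_{B\setminus B'}=q$ and $r^{\bullet}=r_{B\cap B'}=2d\eta q$.

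Substituting into~\eqref{eq:free_energy_lower_bd_one_block}, the entropic part collapses to $(1-2d\alpha)\fsc(\delta;2d\eta q)+2d\alpha\,\fsc(\delta;2d\eta q)=\fsc(\delta;2d\eta q)$, and the quadratic part, using the identity $t+\alpha(1-t)+(1-\alpha)(1-t)=1$, collapses to $-\frac{d\eta q^2}{2}$. Conversely, restricting the supremum to the one-dimensional diagonal $\{(q,\dots,q):q\in[0,1]\}$ yields exactly this one-variable expression as a lower bound, so the two match. The corner case $t=1$ follows either by joint continuity of $F$ in $(t,\underline{q})$ (so the supremum over a compact set is continuous in $t$), or directly: at $t=1$ the $(1-t)$ factors annihilate the variables $q_{B\cap B'}$ and $q_{B\setminus B'}$ from the potential and reduce~\eqref{eq:free_energy_lower_bd_one_block} to a sup in $q_B\in[0,1]$ coinciding with the stated formula. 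The main delicate points---interiority of the maximizer and the $t<1$ invertibility of $J$---are already furnished by the preceding observations and Lemmas~\ref{lem:invertible}--\ref{lem:symmetry}, so the corollary is at heart a clean bookkeeping substitution.
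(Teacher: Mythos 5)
Your proof is correct and follows essentially the same route as the paper: the corollary there is obtained by combining the interiority observation, the invertibility of $J$ (Lemma~\ref{lem:invertible}) and the symmetry Lemma~\ref{lem:symmetry}, and then performing exactly the substitution you carry out; you additionally make explicit the $t=1$ edge case and the collapse of the quadratic terms, which the paper leaves implicit. The only (shared) informality is that interiority is argued only against the faces $q_i=0$ and not $q_i=1$, but this is harmless since the stationarity conditions ${\fsc}'(\delta;\cdot)=q/2$ together with the bound ${\fsc}'\le \frac{1}{2}$ confine any stationary point to $[0,1]^{2d+1}$.
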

 
 Now we consider $\tilde{\phi}_1$ and apply the same argument. The saddle point conditions of optimality of $\underline{m}$ and $\underline{q}$ in the max-min problem~\eqref{eq:free_energy_upper_bd_one_block}  are 
 \begin{align}
\frac{\partial F}{\partial m_i} &= 0~~\mbox{for all}~ i \in [2d+1], \label{eq:saddle_1}\\
\frac{\partial F}{\partial q_i} &=0 ~~~\mbox{or}~~~ q_i=0 ~~\mbox{for all}~ i \in [2d+1], \label{eq:saddle_2}.
\end{align}
 Condition~\eqref{eq:saddle_1} can be rewritten as $ J\underline{x} =0$ where $J$ is the matrix of Lemma~\ref{lem:invertible} and $\underline{x}$ has coordinates $\partial_s\tilde{\fsc}(\delta; s^{\bullet},r^{\bullet}) - m^{\bullet}$ and $\partial_s \tilde{\fsc}(\delta;s_{B \cap B'}, r_{B \cap B'}) -  m_{B \cap B'}$, $B' \sim B$.
 \begin{lemma}  
Under Assumption~\ref{assump:lipschitz}, there exists $\eta_0>0$ depending on $d$, $\alpha$ and $L$ such that if $\eta< \eta_0$, then all solutions $(\underline{m},\underline{q})$ to~\eqref{eq:saddle_1}-\eqref{eq:saddle_2} must satisfy
\[m^{\bullet} = m_{B \cap B'}~~~\mbox{for all}~ B'\sim B.\]
 \end{lemma}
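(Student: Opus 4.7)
The plan is to mirror the proof of Lemma~\ref{lem:symmetry}, now using item A2 of Assumption~\ref{assump:lipschitz} in place of A1. The first step is already in the excerpt: condition~\eqref{eq:saddle_1} rewrites as $J\underline{x}=0$ with $J$ invertible by Lemma~\ref{lem:invertible} (when $t<1$ and $2\alpha d<1$) and coordinates $x_1 = \partial_s\widetilde{\fsc}(\delta; s^{\bullet}, r^{\bullet}) - m^{\bullet}$ and $x_{B'} = \partial_s\widetilde{\fsc}(\delta; s_{B\cap B'}, r_{B\cap B'}) - m_{B\cap B'}$ for $B'\sim B$. Hence $\underline{x}=0$, which gives the first-order system
\[\partial_s\widetilde{\fsc}(\delta; s^{\bullet}, r^{\bullet}) = m^{\bullet}, \qquad \partial_s\widetilde{\fsc}(\delta; s_{B\cap B'}, r_{B\cap B'}) = m_{B\cap B'} \quad \forall\, B'\sim B.\]

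The second step would run the Lemma~\ref{lem:symmetry}-style bootstrap on the $m$-coordinates, \emph{provided} one already knows the companion $\underline{q}$-coordinates of the saddle satisfy $q^{\bullet} = q_{B\cap B'}$ for all $B'\sim B$. Under this hypothesis all the $r$-arguments above collapse to a common value $r_0 = 2d\eta q$, so the two pairwise differences together with A2 yield
\[|m^{\bullet} - m_{B\cap B'}| \le L'\, |s^{\bullet} - s_{B\cap B'}|, \qquad |m_{B\cap B'} - m_{B\cap B''}| \le L'\, |s_{B\cap B'} - s_{B\cap B''}|.\]
A short computation, strictly parallel to the $r$-difference computation performed in the proof of Lemma~\ref{lem:symmetry}, gives
\[s^{\bullet} - s_{B\cap B'} = \frac{(1-t)\eta}{1-\alpha}(m_B - m_{B\cap B'}), \qquad s_{B\cap B'} - s_{B\cap B''} = \frac{(1-t)\eta}{1-\alpha}(m_{B\cap B'} - m_{B\cap B''}),\]
and the two-step iteration of Lemma~\ref{lem:symmetry} closes the argument: first force $m_{B\cap B'} = m_{B\cap B''}$ as soon as $\frac{L'\eta}{1-\alpha} < 1$, then use the identity $m_B = (1-2d\alpha)m^{\bullet} + 2d\alpha\, m_{B\cap B'}$ and the condition $\frac{L'(1-2d\alpha)\eta}{1-\alpha} < 1$ to force $m^{\bullet} = m_{B\cap B'}$. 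This determines $\eta_0 = \eta_0(d,\alpha,L')$.

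The main obstacle is the deferred claim $q^{\bullet} = q_{B\cap B'}$ for the saddle. The $\underline{q}$-stationarity~\eqref{eq:saddle_2} produces first-order equations governed by $\partial_r \widetilde{\fsc}$ in the \emph{noise} variable $r$, whereas Assumption~\ref{assump:lipschitz} A2 only furnishes Lipschitz control of $\partial_s \widetilde{\fsc}$ in the \emph{field} variable $s$. I would bridge this gap by exploiting the Nishimori-type identity at the saddle (which equates Gibbs averages of $R_{1,0}$ and $R_{1,2}$ and thereby links $\partial_r \widetilde{\fsc}$ to $\partial_s \widetilde{\fsc}$ on the optimal ridge), combined with the Maxwell relation $\partial_r \partial_s \widetilde{\fsc} = \partial_s \partial_r \widetilde{\fsc}$; this converts A2-regularity of $\partial_s \widetilde{\fsc}$ in $s$ into the quantitative control of $\partial_r \widetilde{\fsc}$ needed to run the invertibility + iteration scheme of Lemma~\ref{lem:symmetry} on $\underline{q}$. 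Executing this transfer of regularity cleanly is the technical heart of the proof.
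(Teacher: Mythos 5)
Your first two steps (inverting $J$ to get the decoupled stationarity system, then running the Lemma~\ref{lem:symmetry}-style contraction on the $m$-coordinates using item A2) are exactly the paper's argument: the paper's proof is a one-liner stating that A2 makes ``the same argument of Lemma~\ref{lem:symmetry}'' go through for the $s$-variables. Your computation of the $s$-differences and the two-stage iteration with threshold $\eta_0(d,\alpha,L')$ is the intended content.

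Where you diverge is in declaring $q^{\bullet}=q_{B\cap B'}$ a prerequisite and then proposing to prove it. The paper never establishes this and never needs it: the lemma's conclusion concerns only $\underline{m}$, and downstream the $\underline{q}$-coordinates are disposed of by simply \emph{choosing} all of them equal to $m$ inside the $\inf_{\underline{q}}$ when bounding $\tilde{\phi}_1\le\phi_1$. Your proposed bridge for the deferred claim is, moreover, not sound as stated: a Nishimori identity equating averages of $R_{1,0}$ and $R_{1,2}$ holds only on the Bayes-optimal line $s=r$, and at a generic saddle of the constrained max--min there is no reason for $\underline{s}$ and $\underline{r}$ to coincide coordinate-wise; the $\underline{q}$-system is governed by $\partial_r\widetilde{\fsc}$, for which no Lipschitz control is assumed, and the Maxwell relation does not convert A2 into such control. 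That said, you have put your finger on a real subtlety that the paper elides: if the $r$-arguments $r^{\bullet}$ and $r_{B\cap B'}$ differ, then A2 (Lipschitz in $s$ \emph{for each fixed} $r$, with constant uniform in $r$) does not by itself bound $\partial_s\widetilde{\fsc}(\delta;s^{\bullet},r^{\bullet})-\partial_s\widetilde{\fsc}(\delta;s_{B\cap B'},r_{B\cap B'})$ by $L'|s^{\bullet}-s_{B\cap B'}|$; one also needs some control of the $r$-dependence of $\partial_s\widetilde{\fsc}$ (or a strengthened reading of ``uniformly in $r$''). So the honest assessment is: your conditional argument is correct and matches the paper, but the unconditional proof as you propose it has a gap at the deferred step, and the route you sketch to close it would not work; the paper's own route simply does not pass through that step.
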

 \begin{proof}
 Item A2 of Assumption~\ref{assump:lipschitz} says that the map $s \mapsto \partial_s \widetilde{\fsc}(\delta;s, r)$ is Lipschitz uniformly in $r$. Therefore the same argument of Lemma~\ref{lem:symmetry} can be applied to reason that~\eqref{eq:saddle_1} implies $m^{\bullet} = m_{B \cap B'}$ for all $B'\sim B$.
 \end{proof}
 Given the result of the above lemma, we can simplify the formula~\eqref{eq:free_energy_upper_bd_one_block} for $\tilde{\phi}_1$ as follows:
\begin{align*}
\tilde{\phi}_1 &= \sup_{m \in [-1,1]} \inf_{\underline{q} \in [0,1]^{2d+1}} 
\Big\{
(1-2d\alpha) \widetilde{\fsc}(\delta,2d\eta m, r^{\bullet}) + \alpha \sum_{B'\sim B} \widetilde{\fsc}(\delta;2d\eta m,r_{B\cap B'}) \nonumber\\
&\hspace{4cm}- d \eta m^2\\
&\hspace{2cm}+ \frac{1}{4}\sum_{B' \sim B}\Big( t\eta q_B^2
+  \alpha (1-t)\eta q_{B\cap B'}^2
+ (1-\alpha) (1-t)\eta q_{B\setminus B'}^2 \Big)\Big\}.\nonumber
\end{align*} 
Since $s \mapsto  \widetilde{\fsc}(\delta;s,r)$ is even there is no loss in generality in assuming $m \ge 0$. Now we obtain $\tilde{\phi}_1 \le \phi_1$ by setting all coordinates of $\underline{q}$ to the common value $m$. Finally, this implies equality:
 \begin{corollary}
Under Assumption~\ref{assump:lipschitz}, there exists $\eta_0>0$ such that for $\eta \in \R_+ \setminus \mathcal{D}$ and $\eta\le \eta_0$ and for all $t \in [0,1]$ we have
\[\tilde{\phi}_1 = \phi_1 = \sup_{q \in [0,1]} \Big\{\fsc(\delta;2 d \eta q) - \frac{d}{2} \eta {q}^2\Big\} .\]
Hence the one-block free energy has a limit 
\[\underset{\scL \to \infty}{\lim} f_B = \phi_1.\]
 \end{corollary}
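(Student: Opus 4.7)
The plan is to chain together the two interpolation bounds with the two reduction lemmas just established. Inequality \eqref{eq:lower_bd_one_block} already gives $\liminf_{\scL \to \infty} f_B \ge \phi_1$, and inequality \eqref{eq:upper_bd_one_block} gives $\limsup_{\scL \to \infty} f_B \le \tilde\phi_1$. Hence everything reduces to proving $\tilde\phi_1 \le \phi_1$ and identifying the common value with $\sup_{q\in[0,1]}\{\fsc(\delta;2d\eta q) - \frac{d}{2}\eta q^2\}$.

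First I would fix $\eta_0$ small enough so that both preceding reduction lemmas apply simultaneously (this is where Assumption~\ref{assump:lipschitz} is used, via the quantitative bound on $2L\eta/(1-\alpha)$ in the proof of Lemma~\ref{lem:symmetry} and its analogue for $\tilde\phi_1$). The first reduction lemma, combined with the stationarity equations \eqref{eq:first_order} and the invertibility of $J$ (Lemma~\ref{lem:invertible}, valid since $t<1$ and $2\alpha d<1$), collapses the $(2d+1)$-dimensional maximization in \eqref{eq:free_energy_lower_bd_one_block} to the one-parameter problem
\[
\phi_1 \;=\; \sup_{q \in [0,1]}\Big\{ \fsc(\delta;2 d \eta q) - \tfrac{d}{2}\eta q^2 \Big\}.
\]
(For the boundary case $t=1$ one handles it by continuity in $t$, noting that both $\phi_1$ and $\tilde\phi_1$ are continuous functions of $t$ uniformly on compact sets.) The second reduction lemma, applied to the saddle-point equations \eqref{eq:saddle_1}--\eqref{eq:saddle_2}, collapses the $\underline{m}$-variables in \eqref{eq:free_energy_upper_bd_one_block} to a single scalar $m$, leaving the simplified max-min expression displayed immediately before the corollary statement.

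Next I would produce the upper bound $\tilde\phi_1 \le \phi_1$ by inspection of this simplified formula. Since $s \mapsto \widetilde{\fsc}(\delta;s,r)$ is even (it is a pointwise limit of even finite-volume free energies $\widetilde{\fsc_{B_n}}$), I may assume $m \ge 0$ in the outer supremum without loss of generality. Then, inside the infimum over $\underline{q}$, I make the particular choice $q^\bullet = q_{B\cap B'} = m$ for every $B'\sim B$, which forces $q_B = q_{B\cap B'} = q_{B\setminus B'} = m$ and $r^\bullet = r_{B\cap B'} = 2d\eta m$, so that $\widetilde{\fsc}(\delta; 2d\eta m, 2d\eta m) = \fsc(\delta;2d\eta m)$. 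The quadratic terms collapse to $\tfrac{d}{2}\eta m^2$, and the resulting expression is exactly $\fsc(\delta;2d\eta m) - \tfrac{d}{2}\eta m^2$, giving $\tilde\phi_1 \le \phi_1$. Combined with the trivial direction $\phi_1 \le \tilde\phi_1$ (which follows because $\liminf f_B \ge \phi_1$ and $\limsup f_B \le \tilde\phi_1$), this yields $\phi_1 = \tilde\phi_1$ and therefore $\lim_{\scL \to \infty} f_B = \phi_1$.

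I do not expect a serious obstacle here: all the hard analytic work (convergence of $\widetilde{\fsc_{B_n}}$, the Guerra interpolation upper and lower bounds, the concentration-of-measure control on the constrained partition functions, and the Lipschitz-based symmetrization of the stationary points) has already been carried out. The only mild subtlety is verifying that the matching choice $q_i = m$ is admissible (which is automatic since $m \in [0,1]$), and that the endpoint $t=1$ can be absorbed by continuity.
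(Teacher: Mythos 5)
Your proposal is correct and follows essentially the same route as the paper: collapse $\phi_1$ via Lemma~\ref{lem:symmetry}, collapse the $\underline{m}$-variables in $\tilde{\phi}_1$ via its analogue, use evenness to take $m\ge 0$, and plug $\underline{q}=m\bm{1}$ into the inner infimum to get $\tilde{\phi}_1\le\phi_1$, closing the sandwich with the interpolation bounds. Your remark about handling $t=1$ by continuity is a reasonable (and slightly more careful) patch for the restriction $t<1$ in Lemma~\ref{lem:invertible}, but it does not change the argument.
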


\subsection{Limit of $f_{B \cup B'}$}
We proceed similarly to the case of the one-block free energy. In this case we have  $4d+1$ free parameters. Recall the definition of the core $B^{\bullet}$ of a block $B$: $B^{\bullet} = B \setminus \underset{B'' \sim B}{\cup} B''$, and ${B'}^{\bullet} = B' \setminus  \underset{B'' \sim B'}{\cup} B''$. 
We summarize the correspondence between overlaps and free parameters in the following table:
\begin{align*}  
R_{1,2}\big(B^{\bullet}\big) ~&\longleftrightarrow~ q^{\bullet}_B\\
R_{1,2}\big({B'}^{\bullet}\big) ~&\longleftrightarrow~ q^{\bullet}_{B'}\\
R_{1,2}(B \cap B'') ~&\longleftrightarrow~ q_{B\cap B''}, ~~~ \forall ~B'' \sim B,\\
R_{1,2}(B' \cap B'') ~&\longleftrightarrow~ q_{B'\cap B''}, ~~~ \forall ~B'' \sim B'.
\end{align*} 
We obtain the remaining relevant overlaps as 
\begin{align*}  
R_{1,2}(B ) ~&\longleftrightarrow~ q_{B} := (1-2d\alpha) q^{\bullet}_B + \sum_{B''\sim B} \alpha q_{B \cap B''},\\
R_{1,2}(B') ~&\longleftrightarrow~ q_{B'} := (1-2d\alpha) q^{\bullet}_{B'} +\sum_{B''\sim B'} \alpha q_{B' \cap B''},\\
R_{1,2}(B \setminus B'') ~&\longleftrightarrow~ q_{B \setminus B''} := \frac{1-2d\alpha}{1-\alpha} q^{\bullet}_{B} +  \frac{\alpha}{1-\alpha} \sum_{B''' \sim B} q_{B \cap B'''},~~ \forall B'' \sim B,\\
R_{1,2}(B' \setminus B'') ~&\longleftrightarrow~ q_{B' \setminus B''} := \frac{1-2d\alpha}{1-\alpha} q^{\bullet}_{B'} + \sum_{B'''\sim B'} \frac{\alpha}{1-\alpha} q_{B' \cap B'''},~~ \forall B'' \sim B'.
\end{align*} 
We execute the same interpolation as in the previous section:
\begin{align*}
 H^{\eta}_{B''}(\theta_B) ~&\longleftrightarrow~ \hsc(\theta_B; t \eta q_{B}) + \hsc(\theta_{B\cap B''}; (1-t) \eta q_{B \cap B''}) \qquad \forall~B''\sim B\\
 &\hspace{1cm}+  \hsc(\theta_{B\setminus B''}; (1-t) \eta q_{B \setminus B''}),\\
 H^{\eta}_{B''}(\theta_{B'}) ~&\longleftrightarrow~ \hsc(\theta_{B'}; t \eta q_{B'}) + \hsc(\theta_{B'\cap B''}; (1-t) \eta q_{B' \cap B''}) \qquad \forall~B''\sim B'\\
 &\hspace{1cm}+  \hsc(\theta_{B'\setminus B''}; (1-t) \eta q_{B' \setminus B''}).\\
\end{align*}

We proceed exactly as for the computation of $f_B$, and obtain upper and lower bounds on limiting free energy: 
\[\liminf_{\scL \to \infty} f_{B \cup B'} \ge \phi_2, ~~~\mbox{and}~~~
\limsup_{\scL \to \infty} f_{B \cup B'} \le \tilde{\phi}_2,\]
with
\begin{align}\label{eq:free_energy_lower_bd_two_block}
\phi_2 :=
\sup_{\underline{q} \in [0,1]^{4d+1}} \Big\{& 
(1-2d\alpha)\fsc(\delta;r^{\bullet}_B) + (1-2d\alpha)\fsc(\delta;r^{\bullet}_{B'}) 
+  \alpha \fsc(\delta;r_{B\cap B'}) \\
&+  \sum_{\underset{B'' \neq B'}{B''\sim B}} \alpha \fsc(\delta;r_{B \cap B''})
+  \sum_{\underset{B'' \neq B}{B''\sim B'}} \alpha \fsc(\delta;r_{B'\cap B''})\nonumber\\
&- \frac{1}{4}\sum_{B'' \sim B}\Big( t\eta q_B^2
+  \alpha (1-t)\eta q_{B\cap B''}^2
+ (1-\alpha) (1-t)\eta q_{B\setminus B''}^2 \Big)\nonumber\\
&- \frac{1}{4}\sum_{B'' \sim B'}\Big( t\eta q_{B'}^2
+  \alpha (1-t)\eta q_{B'\cap B''}^2
+ (1-\alpha) (1-t)\eta q_{B'\setminus B''}^2 \Big)\Big\}\nonumber.
\end{align}  
and
\begin{align}\label{eq:free_energy_upper_bd_two_block}
\tilde{\phi}_2 :=
\inf_{\underline{m} \in [-1,1]^{4d+1}} \sup_{\underline{q} \in [0,1]^{4d+1}} \Big\{& 
(1-2d\alpha)\widetilde{\fsc}(\delta;s^{\bullet}_B,r^{\bullet}_B) + (1-2d\alpha)\widetilde{\fsc}(\delta;s^{\bullet}_{B'},r^{\bullet}_{B'}) 
+  \alpha \widetilde{\fsc}(\delta;s_{B \cap B'},r_{B\cap B'}) \nonumber\\
&+  \sum_{\underset{B'' \neq B'}{B''\sim B}} \alpha \widetilde{\fsc}(\delta;s_{B \cap B''},r_{B \cap B''})
+  \sum_{\underset{B'' \neq B}{B''\sim B'}} \alpha \widetilde{\fsc}(\delta;s_{B' \cap B''},r_{B'\cap B''})\nonumber\\
&- \frac{1}{2}\sum_{B'' \sim B}\Big( t\eta m_B^2
+  \alpha (1-t)\eta m_{B\cap B''}^2
+ (1-\alpha) (1-t)\eta m_{B\setminus B''}^2 \Big)\nonumber\\
&- \frac{1}{2}\sum_{B'' \sim B'}\Big( t\eta m_{B'}^2
+  \alpha (1-t)\eta m_{B'\cap B''}^2
+ (1-\alpha) (1-t)\eta m_{B'\setminus B''}^2 \Big)\nonumber\\
&+ \frac{1}{4}\sum_{B'' \sim B}\Big( t\eta q_B^2
+  \alpha (1-t)\eta q_{B\cap B''}^2
+ (1-\alpha) (1-t)\eta q_{B\setminus B''}^2 \Big)\nonumber\\
&+ \frac{1}{4}\sum_{B'' \sim B'}\Big( t\eta q_{B'}^2
+  \alpha (1-t)\eta q_{B'\cap B''}^2
+ (1-\alpha) (1-t)\eta q_{B'\setminus B''}^2 \Big)\Big\}.
\end{align} 
With `$r$' parameters
\begin{align*}
r^{\bullet}_B &= \sum_{B''\sim B} (t \eta q_{B} + (1-t) \eta q_{B \setminus B''}),\\
r^{\bullet}_{B'} &= \sum_{B''\sim B'} (t \eta q_{B'} + (1-t) \eta q_{B' \setminus B''}),\\
r_{B\cap B'} &= t \eta q_{B} +2(1-t) \eta q_{B \cap B'} + t \eta q_{B'}\\
&~~~+\sum_{\underset{B'' \neq B'}{B'' \sim B}} (t\eta q_{B} + (1-t)\eta q_{B\setminus B''})
+\sum_{\underset{B'' \neq B}{B'' \sim B'}} (t\eta q_{B'} + (1-t)\eta q_{B'\setminus B''}),\\
r_{B \cap B''} &=  t \eta q_{B} +(1-t) \eta q_{B \cap B''}
+\sum_{\underset{B''' \neq B''}{B''' \sim B}} (t\eta q_{B} + (1-t)\eta q_{B\setminus B'''}), ~~~ \forall B'' \neq B',\\
r_{B' \cap B''} &=   t \eta q_{B'}+(1-t) \eta q_{B' \cap B''}
+\sum_{\underset{B''' \neq B''}{B''' \sim B'}} (t\eta q_{B'} + (1-t)\eta q_{B'\setminus B'''}), ~~~ \forall B'' \neq B,
\end{align*}
and `$s$' parameters defined similarly where the `$q$' variables are replaced by `$m$' variables. 
Now we can reduce the number of parameters in the above variational problems in the same way as for $f_{B}$ under Assumption~\ref{assump:lipschitz}: 
 \begin{corollary}
Under Assumption~\ref{assump:lipschitz}, there exists $\eta_0>0$ such that for $\eta \in \R_+ \setminus \mathcal{D}$ and $\eta\le \eta_0$ and for all $t \in [0,1]$ we have
\[\tilde{\phi}_2 = \phi_2 = \sup_{q \in [0,1]} \Big\{2(1-\alpha)\fsc(\delta;2 d \eta q) +\alpha \fsc(\delta;4 d \eta q) - d \eta q^2\Big\}.\]
Hence the two-block free energy has a limit 
\[\underset{\scL \to \infty}{\lim} f_{B\cup B'} = \phi_2.\]
 \end{corollary}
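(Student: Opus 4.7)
The plan is to replicate, with modest combinatorial bookkeeping, the argument carried out for $\phi_1$ and $\tilde\phi_1$ in the previous subsection. The upper and lower bounds $\phi_2 \le \underset{\scL\to\infty}{\liminf} f_{B\cup B'}$ and $\underset{\scL\to\infty}{\limsup} f_{B\cup B'} \le \tilde\phi_2$ have already been established via the Guerra interpolation and the constrained-overlap upper bound, exactly as was done for the one-block case. It therefore only remains to reduce both variational formulas~\eqref{eq:free_energy_lower_bd_two_block} and~\eqref{eq:free_energy_upper_bd_two_block} to the single-parameter formula in the statement, and conclude $\phi_2 = \tilde\phi_2$ by matching.

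First I would attack the lower bound $\phi_2$. Computing $\partial F / \partial \underline{q}_i$ for each of the $4d+1$ free parameters in~\eqref{eq:free_energy_lower_bd_two_block} yields an expression of the form $J \underline{x} = 0$, where $\underline{x}$ is the vector with entries $\fsc'(\delta; r_A) - q_A/2$ indexed by $A \in \{B^\bullet, {B'}^\bullet, B \cap B'', B' \cap B''\}$, and $J$ is a Jacobian matrix whose entries are prescribed by the chain rule on the `$r$'--variables. The matrix $J$ has a natural $2\times 2$ block structure respecting the partition of parameters into those associated with $B$ and those associated with $B'$, coupled only through the shared variable $q_{B \cap B'}$. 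I would then prove the analog of Lemma~\ref{lem:invertible}: $J$ is invertible whenever $t<1$ and $2\alpha d<1$. Concretely, using the Schur complement on the block structure together with the fact that the `within-block' sub-matrices have the same $(b-\beta)I + \beta \mathbf{1}\mathbf{1}^\top$ shape analyzed in Lemma~\ref{lem:invertible}, one obtains positivity of the relevant determinants in the same range of parameters. Invertibility gives the first-order system
\[
\fsc'(\delta;r^\bullet_B) = \tfrac{q^\bullet_B}{2},\quad \fsc'(\delta;r^\bullet_{B'}) = \tfrac{q^\bullet_{B'}}{2},\quad \fsc'(\delta;r_{B\cap B''}) = \tfrac{q_{B\cap B''}}{2},\quad \fsc'(\delta;r_{B'\cap B''}) = \tfrac{q_{B'\cap B''}}{2}.
\]

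Next I would use the Lipschitz property A1 of Assumption~\ref{assump:lipschitz} to show all $q_A$'s collapse to a single value when $\eta$ is small. Taking pairwise differences of the stationarity equations and using the formulas for the `$r$' parameters, each difference $|q_A - q_{A'}|$ is bounded by $\tfrac{2L\eta}{1-\alpha}$ times a linear combination of similar differences. Proceeding in stages (first showing $q_{B\cap B''} = q_{B\cap B'''}$ among neighbors of $B$, then the analogous symmetry on the $B'$ side, then matching $q^\bullet_B$ with the common value on the $B$ side, then matching across the shared variable $q_{B\cap B'}$, and finally $q^\bullet_B = q^\bullet_{B'}$), each step reduces to a contraction of the form $|x-y| \le C(d,\alpha) L\eta |x-y|$, valid for $\eta \le \eta_0(d,\alpha,L)$ sufficiently small. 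Substituting a common value $q$ into~\eqref{eq:free_energy_lower_bd_two_block} and simplifying (the $B^\bullet$ and ${B'}^\bullet$ contribute weight $2(1-2d\alpha)$ at $r^\bullet = 2d\eta q$, the $2d-1$ external joints on each side contribute $2(2d-1)\alpha$ at $r = 2d\eta q$, the shared joint $B\cap B'$ contributes weight $\alpha$ at $r = 4d\eta q$, and the quadratic terms collect to $-2d\eta q^2$) yields, after normalization by $|B|/|B|=1$, the announced formula.

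The treatment of $\tilde\phi_2$ is identical in structure. The saddle-point conditions $\partial F/\partial m_i = 0$ yield $J\underline{x}=0$ with $\underline{x}_A = \partial_s \widetilde{\fsc}(\delta;s_A,r_A) - m_A$, and the invertibility lemma above combined with A2 forces $m^\bullet_B = m^\bullet_{B'} = m_{B\cap B''} = m_{B'\cap B''}$ to a common value $m$ when $\eta \le \eta_0$. Parity of $s \mapsto \widetilde{\fsc}(\delta;s,r)$ allows restriction to $m \ge 0$, at which point the upper bound becomes $\tilde\phi_2 \le \sup_{m\ge 0}\inf_{\underline{q}}\{\ldots\}$; choosing all $q_A$ equal to $m$ in the inner infimum gives $\tilde\phi_2 \le \phi_2$, and combined with the generic inequality $\phi_2 \le \tilde\phi_2$ from the interpolation bounds one concludes equality and the existence of the limit $\underset{\scL\to\infty}{\lim} f_{B\cup B'} = \phi_2$. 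I expect the only real obstacle to be the bookkeeping for invertibility of the $(4d+1)\times(4d+1)$ Jacobian: the shared variable $q_{B\cap B'}$ mildly breaks the block-diagonal structure one would naively hope for, so the Schur-complement reduction must be written out with some care, but no new idea beyond Lemma~\ref{lem:invertible} should be needed.
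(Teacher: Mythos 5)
Your proposal is correct and follows essentially the same route as the paper, which itself disposes of this corollary by saying the $4d+1$-parameter reduction is done ``in the same way'' as the one-block case; you have simply spelled out the Jacobian invertibility, the Lipschitz contraction forcing all overlap parameters to a common value for small $\eta$, and the coefficient bookkeeping ($2(1-2d\alpha)+2(2d-1)\alpha = 2(1-\alpha)$ at argument $2d\eta q$, weight $\alpha$ at $4d\eta q$, quadratic terms summing to $-d\eta q^2$), all of which check out. The matching $\tilde{\phi}_2\le\phi_2$ by setting $\underline{q}=m\mathbf{1}$ with $m\ge 0$ and invoking parity of $s\mapsto\widetilde{\fsc}(\delta;s,r)$ is exactly the paper's one-block argument transplanted, so no new idea is needed or introduced.
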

This concludes the proof of Proposition~\ref{prop:limits_phi1_phi2}.

\paragraph{Acknowledgements.}
The author is highly indebted to Andrea Montanari for numerous and invaluable discussions of the main ideas underlying this work.  


\begin{small}

\bibliographystyle{amsalpha}

\bibliography{all-bibliography.bib}
\addcontentsline{toc}{section}{References}
\end{small}

\end{document}